\numberwithin{equation}{section}
\newtheorem{theorem}{Theorem}[section]
\newtheorem{lemma}[theorem]{Lemma}
\newtheorem{proposition}[theorem]{Proposition}
\theoremstyle{definition}
\newtheorem{definition}[theorem]{Definition}
\newtheorem{question}[theorem]{Question}
\theoremstyle{remark}
\newtheorem{remark}[theorem]{Remark}
\newcommand{\nrm}[1]{\left\Vert#1 \right\Vert}
\newcommand{\abs}[1]{\left\vert #1\right\vert}
\newcommand{\set}[1]{\left\{#1\right\}}
\newcommand{\supp}{\mathrm{supp}}
\newcommand{\bbR}{\mathbb R}
\newcommand{\bbN}{\mathbb N}
\newcommand{\cphi}{C^{\varphi_\alpha}}
\newcommand{\cpsi}{C^{\psi_\alpha}}
\newcommand{\logalpha}[1]{\left( \log \frac{1}{\left \vert #1 \right \vert} \right)^{-\alpha}}
\newcommand{\logonealpha}[1]{\left( \log \frac{1}{\left \vert #1 \right \vert} \right)^{1-\alpha}}
\title{Conservation and Breakdown of Modulus of Continuity in the Propagation of Loglog Vortex for 2D Euler equation}
\author{Woohyu Jeon}
\address{Department of Mathematical Sciences, Seoul National University, Seoul 08826, Republic of Korea}
\email{woohyu1030@snu.ac.kr}
\subjclass[2020]{35Q31, 35Q35, 76B03}
\keywords{2D Euler equation, loglog vortex, conservation of modulus of continuity, breakdown of modulus of continuity}
\begin{document}
\begin{abstract}
	In this paper, we investigate the persistency of loglog singular structure for 2D Euler equation under the perturbation of continuous function. First, we prove that when loglog vortex is perturbed by the function with modulus of continuity $\mu(r)=\left( \log \frac{1}{r}\right)^{-\alpha} ~~ (0<\alpha<1)$, the loglog vortex is propagated while the perturbation term maintaining same modulus of continuity. We then show that this result is sharp: There exists a smooth function such that when the loglog vortex is perturbed by the smooth function, the norm of perturbation term corresponding to modulus of continuity $\mu(r)=\left( \log \frac{1}{r}\right)^{-\alpha} $ instantly blows up for all $\alpha>1$.
\end{abstract}
\maketitle

\tableofcontents

\section{Introduction}
\label{sec: Introduction}

\subsection{Persistence of singular structure}
\label{subsec: Persistence of singular structure}

Singularity, by which we mean a quantity is infinite, is one of the most important and interesting object in mathematical physics. For example, a fundamental question in fluid dynamics is whether a singularity could be formed from smooth initial data in finite time, e.g., for 3D Navier-Stokes equation and 3D Euler equation. Besides the singularity formation issues, one can alternatively start from singularity and investigate whether the singular structure persists. We illustrate the persistency of singular structure for 2D Euler equation in the vorticity form
\begin{equation} \label{EE}
	\left\{ \begin{aligned}
		& w_t+u \cdot \nabla w=0, \\
		& w(\cdot,0)=w_0,
	\end{aligned} \right.
\end{equation}
where the vorticity $w: \bbR^2 \rightarrow \bbR$ and velocity field $u: \bbR^2 \rightarrow \bbR^2$ are related through Biot-Savart law
\begin{equation} \label{Biot-Savart}
	u(x,t) = \frac{1}{2\pi} \int_{\bbR^2} \frac{(x-y)^\perp}{\abs{x-y}^2}w(y) \, dy
\end{equation}
for $(a,b)^\perp \coloneqq (-b,a)$. The system \eqref{EE}-\eqref{Biot-Savart} is known to be globally well-posded in several standard spaces, e.g., Sobolev space $H^s~~(s>1)$ \cite{MB}, H\"older space $C^{k,\alpha}~~ (0<\alpha<1)$ by Wolibner \cite{Wolibner}, and $L^1 \cap L^\infty$ by Yudovich \cite{Yudovich}\footnote{If a function space is written without domain, its domain is automatically assumed to be $\bbR^2$.}. However, in many physical phenomena, vorticity is sharply concentrated in a small region so that it would be modeled through unbounded or even measure-valued function, which could be regarded as a singularity. One of the most important example of the singularity is so-called point vortex where the vorticity is Dirac measure concentrated in the origin
\begin{equation*}
	w_0(x) = \Gamma \delta_0, \qquad v_0(x) = \frac{1}{2\pi} \frac{x^\perp}{\abs{x}^2}
\end{equation*}
for $\Gamma \in \bbR$. Due to its radial nature, the solution emanating from the point vortex is stationary. However, it is more physically reasonable to consider a point vortex with a perturbation term rather than the exact point vortex. Thus, consider initial vorticity of the following form
\begin{equation} \label{equ13}
	w_0(x) = \Gamma \delta_0+g_0(x)
\end{equation}
for $g_0 \in L^1 \cap L^\infty$. Then, it is natural to ask whether the solution $w(x,t)$ of 2D Euler equation starting from initial vorticity \eqref{equ13} maintains the same structure as \eqref{equ13}, i.e.,
\begin{equation*}
	w(x,t) = \Gamma \delta_{\phi^*(t)}+g(x,t)
\end{equation*}
for continuous map $\phi^*:[0,\infty) \rightarrow \bbR^2$ and $g \in L^\infty_{\textnormal{loc}}([0,\infty); L^1 \cap L^\infty)$. The answer to the question, to the best of our knowledge, was shown to be positive under the additional assumption that $g_0$ is constant near the center of the vortex \cite{Lacave}. Actually, not to mention the solution structure, even the uniqueness for initial data of the form \eqref{equ13} is not known due to the seriousness of Dirac singularity. Therefore, to treat the problem of persistency of singular structure rigorously, it is desirable to consider milder singularity for which global well-posdedness was verified. In fact, there has been several results which guaranteed global well-posedness for specific kinds of unbounded function, e.g., for function with mild $L^p$-norm growth with respect to $p$ by Yudovich \cite{Yudovich2} (see \cite{Crippa-Elementary} for more elementary proof by Crippa--Stefani), for critical Besov space by Vishik \cite{Vishik98}, and for BMO-type space by Bernicot--Keraani \cite{Bernicot14}. Among these admissible class for global well-posedness, the loglog vortex is nearly the most severe singularity \cite{Elgindi2312}, and this is a consequence of Theorem 1.6 \& Theorem 1.8 of \cite{Crippa-Elementary} with its mild $L^p$-norm growth (see Lemma 1 in \cite{Drivas-Propagation})
\begin{equation*}
	\nrm{\log \log \frac{1}{\abs{\cdot}}}_{L^p(B(0,e^{-2}))} \leq M ( \log p+1).
\end{equation*}
 Therefore, for a loglog-type singular vorticity $w_s:\bbR^2 \backslash \set{0} \rightarrow \bbR$ defined by
\begin{equation} \label{def of ws}
	w_s(x) \coloneqq \begin{cases}
		\log \log \frac{1}{\abs{x}}, & (0<\abs{x}<e^{-2}), \\
		s(\abs{x}), & (e^{-2} \leq \abs{x} \leq e^{-1}), \\
		0, & (\abs{x} > e^{-1}),
	\end{cases}
\end{equation}
where $s:(0,\infty) \rightarrow \bbR$ could be any function which makes $w_s$ smooth function with bounded support on $\bbR^2$, we have the following global well-posedness of loglog vortex with a bounded perturbation.

\begin{proposition}
	Let $g$ be any bounded function with compact support. Then, there exists a unique global weak solution $($in the standard distribution sense$)$ for \eqref{EE}-\eqref{Biot-Savart} starting from initial vorticity
	\begin{equation*}
		w_0(x) = w_s +g_0.
	\end{equation*}
	In addition, the solution $w$ is Lagrangian; The solution $w(x,t)$ could be written by
	\begin{equation*}
		w(x,t)=w_0(\Phi_t^{-1}(x)),
	\end{equation*}
	where $\Phi_t^{-1}(x)$ is the inverse flow map of $u~(\coloneqq \nabla^\perp \Delta^{-1} w)$.
\end{proposition}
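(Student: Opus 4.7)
The plan is to reduce the proposition to the generalized Yudovich theorem in the admissible class containing loglog-type singularities, and then to read off the Lagrangian structure from that same framework. Concretely, I want to show that $w_0 = w_s + g_0$ belongs to the Yudovich class with growth function $\theta(p) = \log p + 1$; once that is done, Theorems 1.6 and 1.8 of \cite{Crippa-Elementary} (the Yudovich--Crippa--Stefani machinery already recalled above) will deliver existence, uniqueness, and Lagrangianity simultaneously.

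The one concrete estimate to carry out is the $L^p$-bound on $w_0$. Outside $B(0,e^{-2})$ the function $w_s$ is smooth and compactly supported, and $g_0$ is bounded with compact support, so these contributions give $O(1)$ in $\nrm{w_0}_{L^p}$ uniformly in $p \in [1,\infty]$. Inside $B(0,e^{-2})$ the cited bound $\nrm{\log\log(1/\abs{\cdot})}_{L^p(B(0,e^{-2}))} \leq M(\log p + 1)$ from Lemma 1 of \cite{Drivas-Propagation} yields $\nrm{w_0}_{L^p(\bbR^2)} \leq M'(\log p + 1)$. Since $\int_1^\infty \theta(p)^{-1}\,dp = \infty$ for $\theta(p) = \log p + 1$, this places $w_0$ in the admissible Yudovich class. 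Applying Theorems 1.6 and 1.8 of \cite{Crippa-Elementary} then produces a unique global weak solution $w \in L^\infty_{\mathrm{loc}}([0,\infty); L^1 \cap L^p(\bbR^2))$ for every $p < \infty$, together with an Osgood-type modulus of continuity of the velocity $u = \nabla^\perp \Delta^{-1} w$, a unique bi-continuous flow $\Phi_t$, and the transport formula $w(x,t) = w_0(\Phi_t^{-1}(x))$.

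The hard part is really absorbed into the cited framework: proving uniqueness in the generalized Yudovich class and establishing the Osgood modulus of the velocity are the delicate analytic points, but both are already available off the shelf. The only items I would verify directly are the $L^p$-growth computed above and the fact that the Biot--Savart integral \eqref{Biot-Savart} converges pointwise under these integrability hypotheses, which is immediate from $w_0 \in L^1 \cap L^p$ and the $\abs{x-y}^{-1}$ decay of the kernel. Notably, no assumption that $g_0$ vanishes near the origin is needed (in contrast with the Dirac-perturbation result of \cite{Lacave}), because the Yudovich theory only sees the joint $L^p$-growth of $w_0$ and not its local structure at the singular point.
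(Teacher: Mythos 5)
Your proposal is correct and follows exactly the route the paper itself takes: the proposition is presented there as a direct consequence of Theorems 1.6 and 1.8 of \cite{Crippa-Elementary}, using precisely the $L^p$-growth bound $\nrm{\log\log\frac{1}{\abs{\cdot}}}_{L^p(B(0,e^{-2}))}\leq M(\log p+1)$ from Lemma 1 of \cite{Drivas-Propagation}, with the smooth tail of $w_s$ and the bounded compactly supported $g_0$ contributing only $O(1)$. The only cosmetic point is that the admissibility criterion in the Crippa--Stefani framework is $\int_1^\infty \frac{dp}{p\,\theta(p)}=\infty$ rather than $\int_1^\infty \theta(p)^{-1}\,dp=\infty$; both hold for $\theta(p)=\log p+1$, so the conclusion is unaffected.
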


As we have global well-posedness for initial vorticity of the form
\begin{equation} \label{initial vorticity 1}
	w_0(x) = w_s(x) + \text{(compactly supported bounded function)},
\end{equation}
we can safely return to the following precise question on the persistency of loglog singular structure:
\begin{question} \label{Question 2}
		For a Banach space $X \subset L^\infty$, consider the initial vorticity of the form
		\begin{equation} \label{initial vorticity 2}
			w_0(x)=w_s(x)+g_0(x)
		\end{equation}
		for compactly supported $g_0 \in X$. Then, is it possible to write the solution $w(x,t)$ of 2D Euler equation as the form of
		\begin{equation} \label{solution form vorticity}
			w(x,t)=w_s(x-\phi^*(t))+g(x,t)
		\end{equation}
		for continuous $\phi^*:[0,\infty) \rightarrow \bbR^2$ and $g \in L^\infty_{\textnormal{loc}}([0,\infty); X)$?
\end{question}
	
	To the best of our knowledge, Question \ref{Question 2} was first investigated by Drivas--Elgindi--La \cite{Drivas-Propagation}, where they gave positive answer to for $X=L^\infty$. In this paper, we aim to answer the Question \ref{Question 2} for other spaces $X$ consisting of continuous function. We also provide a sharp regularity threshold which separate the answer to Question \ref{Question 2}.

\bigskip

\subsection{Main Result}
\label{subsec: Main Result}

We first define a function space induced from a modulus of continuity.

\begin{definition} \label{MOC space}
	Let $\mu: [0,c] \rightarrow [0,\infty)$ be a modulus of continuity (MOC), i.e., $\mu$ is a non-decreasing continuous function such that $\mu(0)=0$. Then, the space $C^\mu$ is defined by
	\begin{equation*}
		C^\mu \coloneqq \Big\{ f \in L^\infty \, \Big\vert \, \nrm{f}_{C^\mu} \coloneqq \nrm{f}_{\infty}+ \sup_{0<\abs{x-y}<c} \frac{\abs{f(x)-f(y)}}{\mu(\abs{x-y})} < \infty \Big\}.
	\end{equation*}
\end{definition}

For example, for the modulus of continuities $\mu_1(r)= r \log \frac{1}{r} $ and $\mu_2(r) =r^\alpha$, the space $C^{\mu_1}$ and $C^{\mu_2}$ are the collection of log-Lipschitz function and H\"older continuous function respectively. In this paper, we fix the following three modulus of continuities:
\begin{equation} \label{MOCs}
	\varphi_\alpha(r)=\left( \log \frac{1}{r} \right)^{-\alpha}, \qquad \psi_\alpha(r) = r \left( \log \frac{1}{r}\right)^{1-\alpha}, \qquad \phi_\beta(r) = \left( \log \frac{1}{r} \right)^{-\beta}
\end{equation}
for $\alpha \in (0,1), \beta \in (1,\infty)$.

We are now ready to state the first main result which corresponds to persistence of singular structure in the propagation of loglog vortex $w_s$.

\begin{theorem} [Conservation of modulus of continuity in the propagation of loglog vortex]\label{main thm 1}
	Let $\alpha \in (0,1)$ be given and $g_0 \in C^{\varphi_\alpha}$ be a compactly supported function. Then, the unique solution $w(x,t)$ of 2D Euler equation starting from initial vorticity
	\begin{equation*}
		w_0(x)=w_s(x)+g_0(x)
	\end{equation*}
	could be written as the form of
	\begin{equation*}
		w(x,t)=w_s(x-\phi^*(t))+g(x,t)
	\end{equation*}
	for $g \in L^\infty_{\textnormal{loc}}([0,\infty), \cphi)$. Here, $\phi^*:[0,\infty) \rightarrow \bbR^2$ is the particle trajectory of the origin.
\end{theorem}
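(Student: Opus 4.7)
The starting point is the Lagrangian representation from the preceding proposition: $w(x,t)=w_0(\Phi_t^{-1}(x))$, where $\Phi_t$ denotes the flow of $u$. Setting $A(x,t)\coloneqq\Phi_t^{-1}(x)$ and $B(x,t)\coloneqq x-\phi^*(t)$ (recall $\phi^*(t)=\Phi_t(0)$), I decompose
\begin{equation*}
g(x,t)\coloneqq w(x,t)-w_s(B(x,t))=\underbrace{g_0(A(x,t))}_{\eqqcolon g_1(x,t)}+\underbrace{w_s(A(x,t))-w_s(B(x,t))}_{\eqqcolon g_2(x,t)},
\end{equation*}
so the theorem reduces to proving $g_1,g_2 \in L^\infty_{\textnormal{loc}}([0,\infty);C^{\varphi_\alpha})$.

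The first step is a regularity estimate for the flow. Since the solution is Lagrangian, the bound $\|w(\cdot,t)\|_{L^p}\lesssim \log p+1$ is conserved, so Biot--Savart combined with the Yudovich-type estimate of \cite{Crippa-Elementary} gives a (nearly) loglog-Lipschitz modulus of continuity for $u(\cdot,t)$. Osgood's lemma then produces, on any compact $[0,T]$, a modulus $\omega_T$ for $\Phi_T^{\pm 1}$ that decays fast enough to satisfy $\varphi_\alpha\circ\omega_T\lesssim_T\varphi_\alpha$ for every $\alpha\in(0,1)$; concretely $\omega_T(r)\lesssim r^{c_T}$ for some $c_T\in(0,1)$. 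This at once handles $g_1$:
\begin{equation*}
|g_1(x,t)-g_1(y,t)|\le \|g_0\|_{C^{\varphi_\alpha}}\varphi_\alpha\bigl(|A(x,t)-A(y,t)|\bigr)\lesssim_T \|g_0\|_{C^{\varphi_\alpha}}\varphi_\alpha(|x-y|),
\end{equation*}
together with the trivial $\|g_1\|_\infty\le\|g_0\|_\infty$.

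The technical core is the estimate on $g_2$. Near the vortex center both $A(x,t)$ and $B(x,t)$ tend to zero and $w_s(A),w_s(B)$ diverge like $\log\log(1/|B|)$; however, the Hölder-type comparison $|A|\sim|B|^{c_t}$ yields
\begin{equation*}
g_2(x,t)=\log\frac{\log(1/|A(x,t)|)}{\log(1/|B(x,t)|)}
\end{equation*}
(near the center), which is a bounded perturbation of $\log(1/c_t)$, so $\|g_2\|_\infty\lesssim_T 1$. To upgrade this to a $C^{\varphi_\alpha}$ bound, I plan a case split on the ratio of $|x-y|$ to $\min\{|x-\phi^*(t)|,|y-\phi^*(t)|\}$: in the large-separation regime the $L^\infty$ bound together with monotonicity of $\varphi_\alpha$ closes the estimate; in the small-separation regime I expand $g_2(x)-g_2(y)$ via the identity above and control the resulting difference of ratios of iterated logarithms termwise, exploiting that $A$ has modulus $\omega_T$ while $B$ is merely translation.

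\textbf{Main obstacle.} The hard part is the small-separation regime: the two brackets $\log\log(1/|A(x)|)-\log\log(1/|A(y)|)$ and $\log\log(1/|B(x)|)-\log\log(1/|B(y)|)$ are individually too large (they encode the singularity of $\nabla w_s$ at the center) and must cancel to leading order. Quantifying this cancellation in the $\varphi_\alpha$-scale requires tracking the fine Osgood modulus of $A$ and, in particular, a bi-Lipschitz-type comparison $|\log|A(x)|-c_t\log|B(x)||\lesssim_T 1$ with uniform constants that themselves enjoy a $\varphi_\alpha$ modulus in $x$. I expect this to force convergence of the relevant Dini-type integrals exactly in the range $\alpha<1$, with a Gronwall argument in $t$ closing the estimate uniformly on $[0,T]$; the sharpness of the range matches the counterexample stated in the introduction for $\alpha>1$.
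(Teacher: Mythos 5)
Your decomposition $g=g_0\circ\Phi_t^{-1}+\bigl[w_s\circ\Phi_t^{-1}-w_s(\cdot-\phi^*(t))\bigr]$ is legitimate, and the $g_1$ estimate is correct: log-Lipschitz velocity gives a H\"older flow with exponent $e^{-Nt}$, and $\varphi_\alpha(r^{c})=c^{-\alpha}\varphi_\alpha(r)$, so $C^{\varphi_\alpha}$ is stable under composition with the flow. But the entire content of the theorem sits in $g_2$, and there your proposal has a genuine gap: you give no mechanism for the cancellation between $w_s(A(x))-w_s(A(y))$ and $w_s(B(x))-w_s(B(y))$. Each bracket separately is of size $\approx \frac{|x-y|}{|B|\log(1/|B|)}$, which exceeds $\varphi_\alpha(|x-y|)$ already for $|x-y|\sim|B|\bigl(\log(1/|B|)\bigr)^{-\alpha/2}$, so the cancellation is unavoidable; yet the comparison you propose, $\bigl|\log|A(x)|-c_t\log|B(x)|\bigr|\lesssim_T 1$ ``with constants that themselves enjoy a $\varphi_\alpha$ modulus in $x$,'' is the theorem in disguise. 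Since $g_2(x,t)=\log\bigl(\log(1/|A(x,t)|)/\log(1/|B(x,t)|)\bigr)$ near the center, asserting that this ratio has a $\varphi_\alpha$ modulus in $x$ is circular, and the only flow regularity available a priori (Osgood/H\"older, from log-Lipschitz $u$) is far too crude to extract it.

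The paper closes exactly this gap by leaving the purely Lagrangian picture and using the PDE for $g$: $g_t+u\cdot\nabla g=-(u_g(x,t)-u_g(\phi^*(t),t))\cdot\nabla w_s(x-\phi^*(t))$. The cancellation is then between the velocity increment, which vanishes at the center at rate $\psi_\alpha(r)=r(\log(1/r))^{1-\alpha}$ \emph{once} $g\in C^{\varphi_\alpha}$ (Lemma \ref{vor to vel lemma}), and the $\tfrac{1}{r\log(1/r)}$ singularity of $\nabla w_s$, so the forcing lands back in $C^{\varphi_\alpha}$ (Lemma \ref{forcing term}); because the improved regularity of $u_g$ is only available a posteriori, the argument is run as a Picard iteration $g^{(n+1)}_t+u\cdot\nabla g^{(n+1)}=-u_{g^{(n)}}\cdot\nabla w_s$, contractive on short time intervals, plus an $L^\infty$ uniqueness step identifying the limit with $g$. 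Note that differentiating your $w_s(A)-w_s(B)$ in time along characteristics simply reproduces this forced transport equation, so carrying out your ``termwise'' control of the iterated-logarithm ratios would in effect force you to rediscover the paper's scheme; as written, the $g_2$ step is a restatement of the difficulty rather than a proof.
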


Theorem \ref{main thm 1} gives positive answer to Question \ref{Question 2} for $X=\cphi$. While constant vorticity condition is necessary in the neighborhood of point vortex to prove the propagation of point vortex up to bounded correction, Theorem \ref{main thm 1} does not require any specific condition in the neighborhood of loglog vortex. Also, our result shows that loglog vortex is not strong enough to break continuity of perturbation term for wide class of continuous initial vorticity, beyond the boundedness of perturbation term which was shown in \cite{Drivas-Propagation}.

Theorem \ref{main thm 1} leads to the natural question of the existence of other function space $X$ in $\bbR^2$ which also gives positive answer to Question \ref{Question 2}. However, the next main result shows that Theorem \ref{main thm 1} is sharp: The answer to Question \ref{Question 2} is \textit{No} for $X=C^{\phi_\beta}$ for any $\beta \in (1,\infty)$. More precisely:

\begin{theorem} [Breakdown of modulus of continuity in the propagation of loglog vortex] \label{main thm 2}
	There exists a compactly supported smooth function $g_0$ such that for any $\beta\in (1,\infty)$ and $T>0$, the unique solution $w(x,t)$ starting from the initial vorticity
	\begin{equation*}
		w_0(x)=w_s(x)+g_0(x),
	\end{equation*}
	which, by Theorem \ref{main thm 1}, could be written in the form of
	\begin{equation*}
		w(x,t)=w_s(x-\phi^*(t))+g(x,t)
	\end{equation*}
	for $g \in L^\infty([0,T];L^\infty)$ and trajectory of the origin $\phi^*:[0,\infty)\rightarrow \bbR^2$, we have
	\begin{equation} \label{blowup}
		\sup_{t \in [0,T]} \nrm{g(\cdot,t)}_{C^{\phi_\beta}}=\infty.
	\end{equation}
\end{theorem}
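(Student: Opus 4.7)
The plan is to use the Lagrangian representation to isolate the piece of $g$ responsible for the breakdown. Since $w(x, t) = w_0(\Phi_t^{-1}(x))$ and $w_0 = w_s + g_0$, comparing with $w = w_s(\cdot - \phi^*(t)) + g$ and passing to the moving frame $y = x - \phi^*(t)$ via $\tilde\Phi_t(y) := \Phi_t(y) - \phi^*(t)$ (so that $\tilde\Phi_t(0) = 0$) yields
\begin{equation*}
  g(y + \phi^*(t), t) \;=\; \underbrace{w_s(\tilde\Phi_t^{-1}(y)) - w_s(y)}_{=: A(y,t)} \;+\; g_0(\tilde\Phi_t^{-1}(y)).
\end{equation*}
The second term is a smooth function composed with a flow that is radius-preserving to leading order near the vortex center; I expect it to remain in $C^{\phi_\beta}$ with bounded norm, so any breakdown must come from $A(y, t)$, which records precisely how the moving-frame flow departs from pure radial rotation about the loglog vortex.

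I would choose $g_0 \in C_c^\infty(\bbR^2)$ so that $\calA := \nabla u_{g_0}(0)$ has nonzero traceless symmetric part; a direct Biot-Savart computation shows this holds for instance when $g_0(x_1, x_2) = \chi(x_1^2 + x_2^2 - 1)\, x_1 x_2$ with a suitable smooth bump $\chi$, giving $\calA = c \,\mathrm{diag}(1, -1)$ with $c \neq 0$. For small $t > 0$, continuity of the flow keeps $u_g(\cdot, t)$ close to $u_{g_0}$ and gives $u_g(x, t) - u_g(\phi^*(t), t) = \calA_t(x - \phi^*(t)) + \text{l.o.t.}$ with $\calA_t \approx \calA$. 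The moving-frame velocity is then $u_s(y) + \calA_t y + \text{l.o.t.}$, and in polar coordinates $(r,\theta)$,
\begin{equation*}
  \dot r = r\, f_t(\theta) + \text{l.o.t.}, \qquad \dot\theta = \Omega(r) + O(1), \qquad f_t(\theta) := \calA_t \hat y \cdot \hat y, \quad \Omega(r) := \tfrac12 \log\log(1/r).
\end{equation*}
Since $f_0$ is a trace-free quadratic form in $(\cos\theta, \sin\theta)$ it has zero angular mean, while $\Omega(r) \to \infty$ as $r \to 0$. Integrating along the backward trajectory from $y$ thus yields
\begin{equation*}
  |\tilde\Phi_t^{-1}(y)| = |y|\,e^{-G(y,t)}, \qquad G(y, t) \;\approx\; \frac{F(\theta) - F(\theta - t\Omega(r))}{\Omega(r)}, \qquad F' = f_0,
\end{equation*}
and, using $w_s(z) = \log\log(1/|z|)$,
\begin{equation*}
  A(y, t) \;\approx\; \frac{G(y, t)}{\log(1/|y|)} \;\approx\; \frac{F(\theta) - F(\theta - t\Omega(r))}{\Omega(r)\log(1/r)}.
\end{equation*}

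Now fix $t > 0$ small and $\beta > 1$. Since $\Omega(r) \to \infty$ as $r \to 0$, there is a sequence $r_n \to 0$ with $t\Omega(r_n)$ resonant with the dominant Fourier mode of $f_0$, e.g.\ $t\Omega(r_n) \equiv \pi/2 \pmod \pi$ when $f_0$ has a $\cos 2\theta$ component, so that the periodic function $h_n(\theta) := F(\theta) - F(\theta - t\Omega(r_n))$ has oscillation bounded below by a definite constant $c_0 > 0$. Picking $y_1^n, y_2^n$ at radius $r_n$ and angles chosen to realize this oscillation on the fixed scale $\eta = 1/2$, I obtain
\begin{equation*}
  |A(y_1^n, t) - A(y_2^n, t)| \;\gtrsim\; \frac{1}{\Omega(r_n)\log(1/r_n)},
\end{equation*}
while the Lipschitz piece $g_0 \circ \tilde\Phi_t^{-1}$ only contributes $O(r_n)$ (hence negligible). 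With $|y_1^n - y_2^n| \sim r_n$ and $\log(1/|y_1^n - y_2^n|) \sim \log(1/r_n)$,
\begin{equation*}
  \frac{|g(y_1^n + \phi^*(t), t) - g(y_2^n + \phi^*(t), t)|}{\phi_\beta(|y_1^n - y_2^n|)} \;\gtrsim\; \frac{(\log(1/r_n))^{\beta - 1}}{\log\log(1/r_n)} \;\longrightarrow\; \infty
\end{equation*}
as $n \to \infty$, using $\beta > 1$. Hence $\nrm{g(\cdot, t)}_{C^{\phi_\beta}} = \infty$, and picking such a $t \in (0, T]$ gives \eqref{blowup} for any $T > 0$.

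The main obstacle is quantifying the approximate identities above with errors that are $o\bigl((\log(1/r))^{\beta - 1}/\log\log(1/r)\bigr)$ at the resonant scales $r_n$. The three ingredients to control rigorously are (i) the continuity $\calA_t \to \calA$ as $t \to 0^+$ with an effective rate, (ii) a quantitative polar-coordinate linearization of the moving-frame flow showing $|\tilde\Phi_t^{-1}(y)| = |y|\,e^{-G(y,t)}(1 + \text{small})$ for trajectories staying deep in the loglog regime, and (iii) control of the second-order correction in $u_g(x, t) - u_g(\phi^*(t), t) - \calA_t(x - \phi^*(t))$ at scale $r_n$. All three are delicate but should be within reach using the a priori regularity $g \in L^\infty_\mathrm{loc}([0,T]; C^{\varphi_\alpha})$ from Theorem \ref{main thm 1} and the resulting regularity of the Biot-Savart operator, producing error terms comfortably smaller than the $(\log\log(1/r_n))^{-1}(\log(1/r_n))^{-1}$ signal extracted at the resonant radii.
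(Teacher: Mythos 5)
Your underlying mechanism is the right one --- the forcing $u_g\cdot\nabla w_s$ produced by a hyperbolic perturbation is of size $1/\log\frac{1}{r}$ near the vortex center, which is too strong for any $C^{\phi_\beta}$ with $\beta>1$ --- but the specific route you take has a genuine gap, and it is precisely the gap that forces the paper into a different argument. You work at a \emph{fixed} time $t>0$ and linearize $u_g(\cdot,t)-u_g(\phi^*(t),t)=\calA_t(x-\phi^*(t))+\text{l.o.t.}$ For $t>0$ the perturbation $g(\cdot,t)$ is only in $C^{\varphi_\alpha}$ (Theorem \ref{main thm 1}), and $\varphi_\alpha$ fails the Dini condition, so $u_g(\cdot,t)$ is not $C^1$ and is not even Lipschitz: the best available modulus is $\psi_\alpha(r)=r\left(\log\frac{1}{r}\right)^{1-\alpha}$ (Lemma \ref{vor to vel lemma}), and the sharp local bound is $\abs{u_g(x,t)}\leq M\abs{x}\bigl(1+t\left(\log\frac{1}{\abs{x}}\right)^{1-\alpha}\bigr)$ (Lemma \ref{local g estimate}). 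Hence the putative ``lower order terms'' in your linearization can be as large as $t\abs{y}\left(\log\frac{1}{\abs{y}}\right)^{1-\alpha}$, which for fixed $t$ is \emph{larger} than the linear term $\calA_t y\sim\abs{y}$ and vastly larger than the radial signal $\abs{y}/\log\log\frac{1}{\abs{y}}$ that your resonance computation must detect. Your item (iii) therefore cannot be closed with the available regularity: the error in $A(y,t)$ is only controlled to be $o\bigl(1/(\Omega(r)\log\frac{1}{r})\bigr)$ when $t\lesssim\left(\log\frac{1}{r}\right)^{-(1-\alpha+\varepsilon)}$, i.e.\ when $t\to0$ together with $r$. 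In that regime $t\,\Omega(r)\to0$, so the particle does not complete even a fraction of a rotation and your resonant radii $r_n$ (which require $t\,\Omega(r_n)$ of order one) do not exist. Consistently with this, the paper explicitly states in its final remark of Section \ref{sec: Introduction} that blow-up of $\nrm{g(\cdot,t)}_{C^{\phi_\beta}}$ at a fixed positive time is open; your proposal would settle that open question, which is a strong sign the error analysis cannot go through as sketched.

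The paper's proof avoids linearizing $u_g(\cdot,t)$ at positive times altogether. It chooses $g_0$ so that the \emph{initial}, smooth field satisfies $-(K+1)r\leq u_{g_0}^2(0,r)\leq -Kr$, writes $g$ by Duhamel along the trajectory $\phi_r$ of $(0,r)$ as in \eqref{equation for g integral}, and proves a Key Lemma (Lemma \ref{key lemma}): $\abs{u_g(\phi_r(s),s)-u_{g_0}(0,r)}\leq\delta r$ for $0\leq s\leq t(r)\coloneqq\left(\log\frac{1}{r}\right)^{-(1-\alpha+\varepsilon)}$, via the decomposition \eqref{eq20} into $I_1,I_2,I_3$ and an interior Schauder estimate for $\nabla u$ away from the origin. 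The forcing then stays comparable to its initial value $K/\log\frac{1}{r}$ on $[0,t(r)]$, giving $\abs{g(\phi_r(t(r)),t(r))}\gtrsim\left(\log\frac{1}{r}\right)^{-(2-\alpha+\varepsilon)}$; comparing with $g(0,t)=0$ and choosing $\alpha=\frac{5-\beta}{4}$, $\varepsilon=\frac{\beta-1}{4}$ yields a lower bound $\left(\log\frac{1}{r}\right)^{(\beta-1)/2}\to\infty$ for the supremum over $t\in[0,T]$. If you want to salvage your write-up, the cleanest fix is to abandon the fixed-$t$ resonance picture, evaluate $g$ along $\phi_r$ at the $r$-dependent time $t(r)$, and prove the stability statement \eqref{equ10}--\eqref{equ10-1} for the integrand on $[0,t(r)]$; that stability is the real content of the proof and is where all the technical work (Propositions \ref{estimation of I1}--\ref{estimation of I2}) lives.
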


Note that the initial perturbation $g_0$ chosen in Theorem \ref{main thm 2} is compactly supported smooth function. Furthermore, as can be seen in its precise definition \eqref{initial data}, $g_0$ is identically zero in a neighborhood of the origin in which the local $C^{\phi_\beta}$-norm blows up instantaneously in the sense of \eqref{blowup}. This implies that the blow-up \eqref{blowup} does not come from a delicate effect of perturbation term, but essentially originates from the structure of loglog singularity. It is also noteworthy that one does not prevent blow-up even though the norm is estimated in lower regularity space whose regularity deteriorates continuously in time, i.e., $H^{s(t)}$ or $C^{0,\alpha(t)}$ (See, e.g., \cite{Chae2308} for this kind of loss of regularity for logarithmically singular surface quasi-geostophic equation).

\begin{remark} \label{Holder remark}
	Since modulus of continuity $\mu(r)=r^\alpha ~~(0<\alpha<1)$ is regular than $\phi_\beta(r)=\left(\log \frac{1}{r}\right)^{-\beta} ~~(\beta>1)$, Theorem \ref{main thm 2} implies breakdown of H\"older continuity for the perturbation term.
\end{remark}

\begin{remark}
	In the definition of singular vortex $w_s$ \eqref{def of ws}, $w_s(x)$ does not need to be equal to $\log \log \frac{1}{\abs{x}}$ exactly near the origin. Instead, any $C^2$ radial function on $\bbR^2 \backslash \set{0}$ which is equivalent to $\log \log \frac{1}{\abs{x}}$ up to second derivative and supported in bounded set suffices to replace $w_s$. More precisely, if a $C^2$ function $q:(0,\infty) \rightarrow \bbR$ satisfies
	\begin{enumerate}
		\item $\supp(q)$ is bounded,
		\item There exist constants $R<e^{-2}, M_1, M_2>0$ such that
		\begin{equation*}
			M_1 \frac{d^k}{dx^k} \left( \log \log \frac{1}{x}\right) \leq \frac{d^k q}{dx^k} \leq M_2 \frac{d^k}{dx^k} \left( \log \log \frac{1}{x} \right)
		\end{equation*}
		for all $0<\abs{x}<R$ and $k=0, 1, 2$,
	\end{enumerate}
	$w_s(x)$ could be replaced with $q(\abs{x})$. This is because, in the proof of our main theorem, we only use information of $w_s$ up to second derivatives.
\end{remark}

\begin{remark}
	If $g_0$ is smooth, something non-trivial could happen only along the center of radial function $w_s$, which was denoted by $\phi^*(t)$. More precisely, by the propagation of regularity for 2D Euler equation (Proposition 8.3 of \cite{MB}), we already know that $w(\cdot,t)$ is smooth at $\bbR^2 \backslash \{\phi^*(t)\}$\footnote{In the Proposition 8.3 of \cite{MB}, there is $L^1 \cap L^\infty$ assumption on $w_0$. However, this $L^1 \cap L^\infty$ condition is only used to get log-Lipschitz flow. For the solution $w$ in \eqref{solution form vorticity}, even though $w_s(x-\phi^*(t))$ does not belong to $L^\infty$, it generates log-Lipschitz velocity field by Lemma \ref{equ7} so that the proof of Proposition 8.3 in \cite{MB} works without modification.}. Therefore, it suffices to focus on value of perturbation term $g$ in \eqref{solution form vorticity} near the center of loglog vortex.
\end{remark}

\begin{remark}
	Note in Theorem \ref{main thm 2} that $g \notin L^\infty([0,T]; C^{\phi_\beta})$ does not imply $g(\cdot, t) \notin C^{\phi_\beta}$ for any fixed $t \in (0,\infty)$. We hope that the blow-up of $C^{\phi_\beta}$-norm for fixed time will be (dis-)proved in the future works.
\end{remark}

\subsection{Notations}
\label{subsec: Notations}

In this subsection, we introduce some notations which will be used throughout this paper.

\begin{enumerate}
	\item For $x \in \bbR^2$ and $r>0$, $B(x,r)$ is a ball of radius $r$ centered at $x$. \vspace{2mm}
	\item For $0<a<b$, the annulus $A(a,b)$ is defined by
	\begin{equation*}
		A(a,b) \coloneqq B(0,b) \backslash B(0,a). \vspace{2mm}
	\end{equation*}
	\item By $C^\infty_c$, we mean the space of compactly supported smooth function.\vspace{2mm}
	\item For each term in \eqref{solution form vorticity}, we denote the velocity field generated by each term by
	\begin{equation} \label{def of velocities}
		u \coloneqq \nabla^\perp \Delta^{-1}w, \qquad u_s \coloneqq \nabla^\perp \Delta^{-1}w_s, \qquad u_g \coloneqq \nabla^\perp \Delta^{-1} g,
	\end{equation}
	so that we could get
	\begin{equation} \label{solution form velocity}
		u(x,t)=u_s(x-\phi^*(t))+u_g(x,t)
	\end{equation}
	from \eqref{solution form vorticity}.\vspace{2mm}
	\item When we write $M=M(\alpha_1, ..., \alpha_n)>0$, we mean a constant $M>0$ depends on variables $\alpha_1, ..., \alpha_n$. Thus, if we write just $M>0$, it is an absolute constant which does not depend on any other variables, or a constant which depends only on fixed solution of 2D Euler throughout this paper. \vspace{2mm}
	\item Although a constant could be changed line by line in successive inequalities, we represent different constants with same symbol for simplicity.
\end{enumerate}

\subsection{Idea of the proof}
\label{subsec: Idea of the proof}

To present the idea for our main results, suppose $g_0$ in \eqref{initial vorticity 2} is symmetric, i.e., $g_0(x)=g(-x)$, just for simplicity. Then, equation for $g$ could be written by
\begin{equation} \label{equ8}
	\left\{ \begin{aligned}
		& g_t+u \cdot \nabla g =-u_g \cdot \nabla w_s, \\
		& g(\cdot,0)=g_0,
	\end{aligned} \right.
\end{equation}
which is a forced transport equation. Here, $u$ is a unique solution of \eqref{EE}-\eqref{Biot-Savart} starting from initial vorticity $w_0$ in \eqref{initial vorticity 2}. By symmetry on $g_0$, we have $u_g(0)=0$ so that it offsets the singularity of $\nabla w_s$, which is equal to $-\frac{1}{\abs{x} \log \frac{1}{\abs{x}}}\frac{x}{\abs{x}}$ near the origin. The following Table \ref{table} shows the regularity of forcing term $-u_g \cdot \nabla w_s$ under the various regularity assumptions on $g$.

\begin{table}[h] 
\begin{tabular}{|c||c|c|c|}
	\hline
	Case & MOC  for $g$ & MOC for $u_g$ & $u_g \cdot \nabla w_s (x)$ \\
	\hline \hline
	(i) & $L^\infty$ & $r \log \frac{1}{r}$ & $\approx 1$ \\
	\hline
	(ii) & $r^\alpha ~~(0<\alpha<1)$ & $r$ & $\approx \left( \log \frac{1}{\abs{x}}\right)^{-1}$ \\
	\hline
	(iii) & $\left( \log \frac{1}{r}\right)^{-\alpha} ~~ (0<\alpha<1)$ & $r \left( \log \frac{1}{r} \right)^{1-\alpha}$ & $\approx \left( \log \frac{1}{\abs{x}}\right)^{-\alpha}$ \\
	\hline
	(vi) & $\left( \log \frac{1}{r} \right)^{-1}$ & $r \log \log \frac{1}{r}$ & $\approx \frac{\log \log \frac{1}{\abs{x}}}{\log \frac{1}{\abs{x}}}$ \\
	\hline
	(v) & $\left( \log \frac{1}{r} \right)^{-\beta} ~~ (\beta>1)$ & $r$ & $\approx \left( \log \frac{1}{\abs{x}}\right)^{-1} $ \\
	\hline
\end{tabular} \caption{Magnitude of forcing term under various modulus of continuity for $g$} \label{table}
\end{table}

In the case (i) and (iii), we can see that assumed modulus of continuity on $g$ (second column) and forcing term on the equation \eqref{equ8} (fourth column) is consistent. This is the fundamental reason the Theorem \ref{main thm 1} holds. Having noticed this consistency, Theorem \ref{main thm 1} could be proven by considering the following standard iteration scheme: Set $g^{(0)}(x,t)=g_0(x)$ and define $g^{(n+1)}$ by the solution of
\begin{equation} \label{equ11}
	\left\{ \begin{aligned}
		& g^{(n+1)}_t+ u \cdot \nabla g^{n+1} = -u_{g^{(n)}} \cdot \nabla w_s, \\
		& g^{(n+1)}(\cdot,0) = g_0.
	\end{aligned} \right.
\end{equation}
Then, it will be shown that the sequence $\{ g^{(n)} \}_{n \in \bbN}$ is the Cauchy sequence in $L^\infty([0, T]; \cphi)$ for any $T \geq 0$ and $\alpha \in (0,1)$. Even if symmetric assumption on $g_0$ is eliminated, the same proof works since offset of singularity in $\nabla w_s(x-\phi^*(t))$ still occurs as can be seen in the general equation for $g$ \eqref{equation for g original}.

In contrast, in the case (ii) and (v), we can see that assumed modulus of continuity on $g$ and forcing term on the \eqref{equ8} is inconsistent: Forcing term is far greater. This is the essential reason the Theorem \ref{main thm 2} and Remark \ref{Holder remark} holds. We briefly sketch the proof of Theorem \ref{main thm 2}. To show $\lim_{t \searrow 0} \nrm{g(\cdot,t)}_{C^{\phi_\beta}}=\infty$, we first construct symmetric initial perturbation $g_0 \in C^\infty_c$ as in \ref{initial data} so that $u_{g_0}(\coloneqq \nabla^\perp \Delta^{-1} g_0)$ is a hyperbolic Lipschitz velocity field satisfying \eqref{Lip initial} and then investigate the value of $g$ along the trajectory $\phi_r(t)$, where $\phi_r: [0,\infty) \rightarrow \bbR^2$ is the unique solution of ODE
\begin{equation}
	\left\{ \begin{aligned}
		& \frac{d}{dt} \phi_r(t) = u(\phi_r(t),t), \\
		& \phi_r(0)=(0,r) \quad (r>0).
	\end{aligned} \right.
\end{equation}
From \eqref{equ8} and the fact that $g_0$ vanishes near the origin, we have
\begin{equation} \label{equ9}
	g(\phi_r(t),t) = -\int_0^t u_g(\phi_r(s),s) \cdot \nabla w_s (\phi_r(s)) \, ds
\end{equation}
for all small $r>0$. Note that when $s=0$, the integrand becomes
\begin{equation*}
	u_g(\phi_r(0),0) \cdot \nabla w_s (\phi_r(0)) \underset{\eqref{def of ws} }{=} u_{g_0}(0,r) \cdot \frac{1}{\abs{(0,r)}\log \frac{1}{\abs{(0,r)}}} \frac{-(0,r)}{\abs{(0,r)}} \underset{\eqref{Lip initial} }{\geq} \frac{K}{\boldsymbol{\log \frac{1}{r}}},
\end{equation*}
which strongly suggests that the forcing term is strong enough to break modulus of continuity less severe than $\left( \log \frac{1}{r}\right)^{-1}$. Thus, to prove Theorem \ref{main thm 2}, it suffices to show the integrand does not deviate far from the value when $s=0$ up to time $t$, i.e., 
\begin{align}
u_g(\phi_r(s),s) \cdot \nabla w_s (\phi_r(s)) &\approx u_g(\phi_r(0),0) \cdot \nabla w_s (\phi_r(0)), \label{equ10} \\
	\abs{\phi_r(s)}& \approx \abs{\phi_r(0)} \label{equ10-1}
\end{align}
for all small $r>0$ and $0 \leq s \leq t$ \footnote{Here, $A \approx B$ means there exists $c>1$ such that $\frac{1}{c}A \leq B \leq cA$}.

However, The main obstacle is that we do not have Lipschitz regularity on velocity field $u$ a priori, even though information on $\nabla u$ is necessary to establish \eqref{equ10}. Furthermore, this lack of regularity could create infinitely small scale in the origin so that the particle trajectory starting at $(0,r)$ could get closer to the origin, a center of loglog singularity near which estimates become worse, far faster than exponential rate (i.e., $\abs{\phi_r(t)}\ll re^{-t}$). This made \eqref{equ10-1} seem infeasible. To overcome this problems, in the earlier version of this paper, which proved breakdown of H\"older continuity mentioned in Remark \ref{Holder remark}, used proof by contradiction; the author made contradictory assumption that perturbation term $g(x,t)$ has uniform in time $C^{0,\alpha}$-norm by which they could avoid aforementioned issues.

In this paper, without adopting proof by contradiction, we use a posteriori estimates available from Section \ref{sec: Conservation of Modulus of Continuity}. Although results in Section \ref{sec: Conservation of Modulus of Continuity} still does not guarantee that $u$ is Lipschitz (precisely, the modulus of continuity for $u$ will be $\mu(r)=r \left( \log \frac{1}{r}\right)^{1-\alpha}$), we show that this regularity is enough to control $\abs{\phi_r(t)}$ for \textit{short time}. To be more precise, we show that for any $\alpha \in (0,1)$, there exist $M>0$, $R=R(\alpha)>0$ and $T>0$ such that for all $0<r<R$ and $0 \leq t \leq T$,
\begin{equation} \label{equ10-2}
	re^{-Mt\left( \log \frac{1}{r}\right)^{1-\alpha}}<\abs{\phi_r(t)} < re^{Mt\left( \log \frac{1}{r}\right)^{1-\alpha}},
\end{equation}
	so that if we take time $t=t(r)$ sufficiently small than $\left( \log \frac{1}{r}\right)^{1-\alpha}$, we could get \eqref{equ10-1}. Also, combining \eqref{equ10-2} with interior Schauder estimate, we could get estimate for $\abs{\nabla u(x)}$ for all $x\in \bbR^2 \backslash \set{0}$ (Proposition \ref{del u proposition}), and this leads to \eqref{equ10-1} without a global Lipschitz estimate on $u$. Using \eqref{equ10}-\eqref{equ10-1}, we get Theorem \ref{main thm 2} almost as a corollary.

\subsection{Organization of the paper}
\label{subsec: Organization of the paper}

We prove the conservation of modulus of continuity (Theorem \ref{main thm 1}) in Section \ref{sec: Conservation of Modulus of Continuity} and breakdown of modulus of continuity (Theorem \ref{main thm 2}) in Section \ref{sec: Breakdown of Modulus of Continuity}.

In Section \ref{subsec: Elementary Lemmas}, we gather elementary lemmas to apply iteration scheme \eqref{equ11}. In Section \ref{subsec: Proof of the Main Theorem: Conservation Part}, we prove Theorem \ref{main thm 1} by completing the iteration scheme.

In Section \ref{subsec: Construction of Initial Data and Basic Properties of the Solution}, we construct $g_0$ in \eqref{initial vorticity 2} and get some estimates for the solution $(w,u)$ starting from initial vorticity $w_0$ in \eqref{initial vorticity 2}. In Section \ref{subsec: The Key Lemma}, we gather results in Section \ref{subsec: Construction of Initial Data and Basic Properties of the Solution} to prove the Key Lemma (Lemma \ref{key lemma}). Then, we finish the proof of Theorem \ref{main thm 2} in Section \ref{subsec: Proof of the Main Theorem: Breakdown Part} as a corollary of Proposition \ref{key lemma}. 

\bigskip

\textbf{Acknowledgements.} The author thanks In-Jee Jeong for fruitful discussions. The author also acknowledges partial support by Samsung Science and Technology Foundation (SSTF-BA2002-04).

\section{Conservation of Modulus of Continuity}
\label{sec: Conservation of Modulus of Continuity}

In this section, we prove Theorem \ref{main thm 1}. For simplicity, we prove the following theorem where we assume the center of loglog vortex is fixed at the origin due to symmetric property of $g_0$ in initial data \eqref{initial vorticity 2}
\begin{theorem} \label{main thm 1 sub}
	Let $\alpha \in (0,1)$ be given. Suppose compactly supported $g_0 \in C^{\varphi_\alpha}$ is symmetric with respect to the origin $($i.e., $g_0(-x)=g_0(x)$ for all $x \in \bbR^2$$)$ so that perturbation term $g(x,t)$ in \eqref{solution form vorticity} is symmetric with respect to the origin and $u_g$ in \eqref{solution form velocity} is odd-symmetric with respect to the origin $($i.e., $u_g(x,t)=-u_g(x,t)$ for all $x\in \bbR^2, t \in [0,\infty)$$)$. Then, $ g \in L^\infty_{\textnormal{loc}}([0,\infty); C^{\varphi_\alpha})$.
\end{theorem}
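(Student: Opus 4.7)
My plan is to execute the iteration scheme announced in Section \ref{subsec: Idea of the proof} as a linear Picard iteration. Since the global well-posedness result furnishes a unique Lagrangian solution $w$, the full velocity field $u = \nabla^\perp \Delta^{-1} w$ is fixed a priori, and the symmetry assumption forces $\phi^*(t) \equiv 0$. Setting $g^{(0)} \equiv g_0$ and defining $g^{(n+1)}$ by the forced transport equation \eqref{equ11}, the method of characteristics gives the representation
\begin{equation*}
g^{(n+1)}(x,t) = g_0(\Phi_t^{-1}(x)) - \int_0^t \bigl(u_{g^{(n)}} \cdot \nabla w_s\bigr)\bigl(\Phi_s(\Phi_t^{-1}(x)), s\bigr) \, ds,
\end{equation*}
where $\Phi_t$ is the Lagrangian flow of $u$. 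By the uniqueness statement, any $L^\infty_t \cphi$-limit of $g^{(n)}$ must coincide with the perturbation term $g$ of \eqref{solution form vorticity}, so it suffices to run the iteration in $\cphi$.

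\textbf{Key ingredients.} I would establish three facts. First, $u$ is log-Lipschitz in space uniformly on compact time intervals: $u_s$ is log-Lipschitz via Lemma \ref{equ7} despite the loglog growth of $w_s$, and $u_g$ is log-Lipschitz because $g \in L^\infty$. This yields the classical bound $|\Phi_t^{\pm 1}(x) - \Phi_t^{\pm 1}(y)| \leq |x-y|^{e^{-Ct}}$, which (by taking $-\log$) translates into
\begin{equation*}
\varphi_\alpha\bigl(|\Phi_t^{\pm 1}(x) - \Phi_t^{\pm 1}(y)|\bigr) \leq e^{C\alpha t}\, \varphi_\alpha(|x-y|),
\end{equation*}
so composition with $\Phi_t^{-1}$ expands the $\cphi$-norm by at most $e^{C\alpha t}$. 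Second, a Biot--Savart mapping property: for compactly supported symmetric $h \in \cphi$,
\begin{equation*}
\|u_h\|_{\cpsi} \lesssim \|h\|_{\cphi}, \qquad u_h(0) = 0,
\end{equation*}
so that $|u_h(x)| \lesssim |x|\,(\log 1/|x|)^{1-\alpha} \|h\|_{\cphi}$ near the origin. Third, the product estimate
\begin{equation*}
\|u_h \cdot \nabla w_s\|_{\cphi} \lesssim \|h\|_{\cphi},
\end{equation*}
obtained by combining the previous bound with $|\nabla w_s(x)| \lesssim 1/(|x|\log(1/|x|))$ near $0$. The $L^\infty$ part is immediate and reproduces row (iii) of Table \ref{table}; the $\varphi_\alpha$-seminorm is handled by splitting into the regimes $|x-y| \geq \tfrac12 |x|$ (controlled by the $L^\infty$ bound applied separately to each endpoint) and $|x-y| < \tfrac12 |x|$ (treated by a pointwise gradient bound on $u_h \cdot \nabla w_s$ inside $B(x,|x|/2)$).

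\textbf{Closing the iteration.} Inserting the three ingredients into the representation formula gives
\begin{equation*}
\|g^{(n+1)}(\cdot,t)\|_{\cphi} \leq e^{C\alpha t}\|g_0\|_{\cphi} + C \int_0^t e^{C\alpha(t-s)} \|g^{(n)}(\cdot,s)\|_{\cphi} \, ds,
\end{equation*}
and the identical argument applied to the difference $g^{(n+1)} - g^{(n)}$ (whose free term vanishes) yields
\begin{equation*}
\|g^{(n+1)} - g^{(n)}\|_{L^\infty([0,T]; \cphi)} \leq C T\, e^{C\alpha T}\, \|g^{(n)} - g^{(n-1)}\|_{L^\infty([0,T]; \cphi)}.
\end{equation*}
Choosing $T = T(\alpha)$ small turns this into a contraction, producing a limit $g^\infty \in L^\infty([0,T]; \cphi)$ which by uniqueness agrees with $g$. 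Iterating on successive short intervals of length $T(\alpha)$ (the constant does not degenerate, since the a priori bound is linear in the data at each step) covers $[0,\infty)$ with at most exponential-in-$t$ growth of the $\cphi$-norm.

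\textbf{Main obstacle.} The principal difficulty is the product estimate $\|u_h \cdot \nabla w_s\|_{\cphi} \lesssim \|h\|_{\cphi}$. The factor $\nabla w_s$ is genuinely singular at the origin, and only the symmetry-induced cancellation $u_h(0)=0$ quantified by the \emph{sharp} modulus $\psi_\alpha$ (not merely log-Lipschitz) allows the singularity to be absorbed into $\varphi_\alpha$. Any logarithmic slack in this estimate would prevent the iteration from closing; the exact matching of exponents is precisely the consistency displayed in row (iii) of Table \ref{table} and is what distinguishes case (iii) from case (v). The $\varphi_\alpha$-seminorm bound, in particular, will require careful near/far splitting relative to $|x|$ and a quantitative use of $\|\nabla(u_h \cdot \nabla w_s)\|_{L^\infty(B(x,|x|/2))}$ obtained from interior Calder\'on--Zygmund/Schauder estimates for $u_h$.
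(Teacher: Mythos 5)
Your overall architecture is the same as the paper's: freeze the velocity field $u$ of the actual solution, run the linear Picard iteration \eqref{equ11}, control the composition with $\Phi_t^{-1}$ via the double-exponential separation rate of a log-Lipschitz flow, close the loop with the Biot--Savart bound $\nrm{u_h}_{\cpsi}\lesssim\nrm{h}_{\cphi}$ and the product estimate $\nrm{u_h\cdot\nabla w_s}_{\cphi}\lesssim\nrm{h}_{\cphi}$, contract on a short time interval, restart, and identify the limit with $g$ by uniqueness. The far/near splitting relative to $\abs{x}$ and the pointwise weighted bound $\abs{u_h\cdot\nabla w_s(z)}\lesssim\varphi_\alpha(\abs{z})\nrm{h}_{\cphi}$ are exactly the paper's Lemma \ref{forcing term} and Proposition \ref{consistency lemma}.

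There is, however, one step that fails as you have written it: in the near regime $\abs{x-y}<\tfrac12\abs{x}$ you propose to control the $\varphi_\alpha$-seminorm of the product through $\nrm{\nabla(u_h\cdot\nabla w_s)}_{L^\infty(B(x,\abs{x}/2))}$, obtained from interior Schauder/Calder\'on--Zygmund estimates for $u_h$. This requires a pointwise bound on $\nabla u_h$, which is not available: for $h\in\cphi$ the modulus $\varphi_\alpha(r)=\logalpha{r}$ with $\alpha<1$ is \emph{not} Dini ($\int_0^1\varphi_\alpha(r)\,\tfrac{dr}{r}=\infty$), so $\nabla u_h$ is in general unbounded even locally, and an interior Schauder estimate on $B(x,\abs{x}/2)$ would invoke the H\"older seminorm $[h]_{C^{0,\gamma}}$, which is infinite for generic $h\in\cphi$. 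Concretely, expanding $\nabla(u_h\cdot\nabla w_s)=\nabla u_h\cdot\nabla w_s+u_h\cdot\nabla^2w_s$, the second term is fine but the first has no usable pointwise bound. The fix — which is what the paper does in Case II-(ii) of Lemma \ref{forcing term} — is to never differentiate $u_h$: split the difference as $\abs{u_h(x)-u_h(y)}\,\abs{\nabla w_s(x)}+\abs{u_h(y)}\,\abs{\nabla w_s(x)-\nabla w_s(y)}$, estimate the first factor of the first term by the $\psi_\alpha$-modulus of $u_h$ (which you already have from the Biot--Savart lemma), and apply the mean value theorem only to the explicit function $\nabla w_s$, using $u_h(0)=0$ and $\abs{u_h(y)}\lesssim\abs{y}\logonealpha{y}$ for the second term. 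With that replacement your argument goes through and coincides with the paper's.
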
 

To prove Theorem \ref{main thm 1 sub}, we gather a few elementary lemmas.

\subsection{Elementary Lemmas}
\label{subsec: Elementary Lemmas}

\begin{lemma} \label{vor to vel lemma}
	Let $\alpha \in (0,1)$ be given. Then, there exists a constant $C=C(\alpha, \abs{\supp (w)})$ such that for compactly supported $w \in \cphi$, we have $u=\nabla^\perp \Delta^{-1}w \in \cpsi$ with
	\begin{equation} \label{vor to vel}
		\nrm{u}_{\cpsi} \leq C \nrm{w}_{\cphi}.
	\end{equation}
\end{lemma}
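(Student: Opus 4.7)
The plan is to estimate $u(x) - u(y) = \int_{\bbR^2}[K(x-z) - K(y-z)]w(z)\,dz$, with Biot--Savart kernel $K(\zeta) = \zeta^\perp/(2\pi|\zeta|^2)$, via a three-region decomposition of the integral, using the splitting $w(z) = w(x) + [w(z) - w(x)]$ in each region to take advantage of the modulus-of-continuity hypothesis on $w$. The uniform bound $\nrm{u}_\infty \leq C(|\supp w|)\nrm{w}_\infty$ is standard (split the kernel integral according to distance from $x$ to $\supp w$), so the real task is the $\psi_\alpha$ modulus estimate.

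Fix $x,y \in \bbR^2$ with $\delta := |x-y|$ small and $h := x-y$. Write $\bbR^2 = A_1 \cup A_2 \cup A_3$ with $A_1 = \{|z-x| \leq 2\delta\}$, $A_2 = \{2\delta < |z-x| \leq 1\}$, $A_3 = \{|z-x| > 1\}$, and correspondingly $u(x) - u(y) = I_1 + I_2 + I_3$. On $A_1$, the crude bounds $|K(x-z)| + |K(y-z)| \lesssim 1/|x-z| + 1/|y-z|$ together with $|w(z) - w(x)| \leq \nrm{w}_{\cphi}\,\varphi_\alpha(2\delta)$ give $|I_1| \lesssim \delta\nrm{w}_\infty + \delta\,\varphi_\alpha(\delta)\nrm{w}_{\cphi}$, both of which are dominated by $\psi_\alpha(\delta)\nrm{w}_{\cphi}$ for $\delta$ small. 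On $A_3$, the mean-value inequality $|K(x-z) - K(y-z)| \lesssim \delta/|x-z|^2$ together with compact support of $w$ yield $|I_3| \lesssim \delta\nrm{w}_\infty$, with constant depending on $|\supp w|$.

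The crux is $A_2$. For the $[w(z)-w(x)]$-part, inserting $|w(z)-w(x)| \leq \nrm{w}_{\cphi}(\log 1/|z-x|)^{-\alpha}$ produces
\[ \delta\nrm{w}_{\cphi} \int_{2\delta}^{1} \frac{(\log 1/r)^{-\alpha}}{r}\,dr = \frac{\delta\nrm{w}_{\cphi}}{1-\alpha}\bigl(\log 1/(2\delta)\bigr)^{1-\alpha} \lesssim \psi_\alpha(\delta)\nrm{w}_{\cphi}, \]
after the substitution $s = \log(1/r)$, which is exactly the sharp order $\psi_\alpha(\delta)$. For the $w(x)$-part I need to control $|w(x)| \cdot |J|$, where $J := \int_{A_2}[K(x-z) - K(y-z)]\,dz$: a brute-force estimate of $J$ via $|K(x-z)-K(y-z)| \lesssim \delta/|x-z|^2$ would cost an additional $\log(1/\delta)$ and only recover the classical log-Lipschitz bound. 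Instead, change variables $\zeta = x-z$ to rewrite $J = \int_{2\delta < |\zeta| < 1}[K(\zeta) - K(\zeta - h)]\,d\zeta$; by antisymmetry $K(-\zeta) = -K(\zeta)$ the first piece vanishes, and the shifted piece $\int_{2\delta < |\zeta| < 1} K(\zeta - h)\,d\zeta = \int_{2\delta < |\eta + h| < 1} K(\eta)\,d\eta$ differs from $\int_{2\delta < |\eta| < 1} K(\eta)\,d\eta = 0$ only by integration over two annular symmetric-difference strips of width $O(\delta)$ near $|\eta| = 2\delta$ and $|\eta| = 1$; using $|K(\eta)| \lesssim 1/|\eta|$, each strip contributes $O(\delta)$. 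Hence $|J| \lesssim \delta$ and the $w(x)$-part is bounded by $\delta\nrm{w}_\infty$.

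Combining the three regions yields $|u(x) - u(y)| \leq C(\alpha,|\supp w|)\nrm{w}_{\cphi}\,\psi_\alpha(\delta)$ for $\delta$ small, and for $\delta$ of order $1$ the estimate follows from the $L^\infty$ bound on $u$. The main obstacle is the constant-$w(x)$ contribution on $A_2$: without the antisymmetry-based cancellation argument one loses a full power of $\log(1/\delta)$, ending up only with the classical log-Lipschitz estimate instead of the sharp exponent $1-\alpha$ appearing in $\psi_\alpha$. The cancellation reduces that term to $O(\delta)$, which, together with the sharp computation of the modulus integral above, produces the stated bound.
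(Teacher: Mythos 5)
Your proof is correct and follows essentially the same route as the paper: the paper simply quotes the inequality $\abs{u(x)-u(y)} \leq C(\nrm{w}_1+\nrm{w}_\infty)\abs{x-y} + C\abs{x-y}\int_{B(x,1)\setminus B(x,2\abs{x-y})}\frac{\abs{w(x)-w(z)}}{\abs{x-z}^2}\,dz$ from Lemma 8.1 of Majda--Bertozzi, whose proof is exactly your three-region decomposition with the antisymmetry cancellation of the constant part on the intermediate annulus, and then performs the same sharp computation $\int_{2\delta}^{1} r^{-1}\bigl(\log \tfrac1r\bigr)^{-\alpha}dr = \tfrac{1}{1-\alpha}\bigl(\log\tfrac{1}{2\delta}\bigr)^{1-\alpha}$. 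The only difference is that you rederive the Majda--Bertozzi inequality from scratch rather than citing it.
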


\begin{proof}
	The proof is almost same as the proof of the fact that velocity field generated by $L^1 \cap L^\infty$ vorticity is log-Lipschitz (see Lemma 8.1 of \cite{MB}). From the proof of Lemma 8.1 of \cite{MB}, there exists $C>0$ such that for $\abs{x-y}<1$,
	\begin{equation} \label{eq1}
		\begin{aligned}
			& \abs{u(x)-u(y)} \\
			\underset{\substack{\text{Lem 8.1 } \text{in \cite{MB}}} }&{\leq} C ( \nrm{w}_1 + \nrm{w}_{\infty}) \abs{x-y}+ C\abs{x-y} \int_{B(x,1) \backslash B(x, 2\abs{x-y})} \frac{\abs{w(x)-w(z)}}{\abs{x-z}^2} \, dz \\
			& \leq C ( \nrm{w}_1 + \nrm{w}_{\infty}) \abs{x-y}+ C\nrm{w}_{\cphi}\abs{x-y}  \int_{B(x,1) \backslash B(x, 2\abs{x-y})} \frac{\logalpha{x-z}}{\abs{x-z}^2} \, dz \\
			&= C ( \nrm{w}_1 + \nrm{w}_{\infty}) \abs{x-y}+ \frac{C}{1-\alpha}  \nrm{w}_{\cphi} \abs{x-y} \left( \log \frac{1}{2\abs{x-y}}\right)^{1-\alpha} \\
			&\leq C \nrm{w}_{\cphi} \abs{x-y}\left( \log \frac{1}{\abs{x-y}}\right)^{1-\alpha},
		\end{aligned}
	\end{equation}
	where $C$ is a constant depending only on $\alpha$ and $\abs{\supp (w)}$. Combining \eqref{eq1} with
	\begin{equation*}
		\nrm{u}_{\infty} \leq \nrm{w}_{1} + \nrm{w}_{\infty} \leq (\abs{\supp (w)}+1) \nrm{w}_{\cphi}
	\end{equation*}
	yields the inequality \eqref{vor to vel}.
\end{proof}

Next, we point out the fact that under the log-Lipschitz vector field, the two points could get closer or farther at most double exponential rate.

\begin{lemma}
	Let $u$ be a log-Lipschitz vector field such that
	\begin{equation*}
		\frac{u(x,t)-u(y,t)}{\abs{x-y} \log \frac{1}{\abs{x-y}}} \leq N
	\end{equation*}
	for all $\abs{x-y} < e^{-1}, 0 \leq t \leq T$, and $\Phi(\cdot,t)$ be its flow map. Then, for $\abs{x-y}<e^{-NT}$,
	\begin{equation} \label{access rate}
		\abs{x-y}^{e^{Nt}} \leq \abs{\Phi(x,t)-\Phi(y,t)} \leq \abs{x-y}^{e^{-Nt}}.
	\end{equation}
\end{lemma}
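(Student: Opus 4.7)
The plan is to study the scalar quantity $d(t) \coloneqq |\Phi(x,t)-\Phi(y,t)|$ via an ODE, linearize it through a logarithmic change of variables, and integrate. The log-Lipschitz bound is an Osgood modulus, so the flow $\Phi(\cdot,t)$ is uniquely defined and, crucially, satisfies $d(t)>0$ for all $t$ whenever $d(0)>0$; hence $d$ is differentiable wherever it remains positive. Differentiating the flow equation and using the triangle inequality together with the hypothesis applied to the pair $(\Phi(x,t),\Phi(y,t))$ gives, as long as $d(t) < e^{-1}$,
\begin{equation*}
|\dot d(t)| \leq |u(\Phi(x,t),t)-u(\Phi(y,t),t)| \leq N\, d(t) \log \frac{1}{d(t)}.
\end{equation*}

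To integrate this, I would introduce $f(t) \coloneqq \log(1/d(t))$, so that $\dot d = -d\,\dot f$. Substituting and dividing the previous display by $d(t)>0$ collapses it to the clean linear inequality $|\dot f(t)| \leq N f(t)$. Equivalently $(\log f)' \in [-N, N]$, and a direct Gronwall integration yields $f(0)e^{-Nt} \leq f(t) \leq f(0)e^{Nt}$. Exponentiating and rewriting in terms of $d$ recovers precisely $|x-y|^{e^{Nt}} \leq d(t) \leq |x-y|^{e^{-Nt}}$, which is the claim.

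The one genuine subtlety, and the step I expect to be the main obstacle, is justifying that $d(t)$ stays inside the regime $(0,e^{-1})$ on which the log-Lipschitz hypothesis is actually available, so that the differential inequality may be applied continuously throughout $[0,T]$. I would handle this by a standard continuation argument: by continuity of $d$ and the initial bound $d(0) = |x-y| < e^{-NT} \leq e^{-1}$, the set $\{t \in [0,T] : d(s) < e^{-1} \text{ for all } s \in [0,t]\}$ is nonempty; let $T^*$ denote its supremum. On $[0,T^*)$ the computation above is valid, so in particular the upper bound $d(t) \leq |x-y|^{e^{-Nt}}$ holds. One then invokes the smallness hypothesis $|x-y| < e^{-NT}$ to conclude that this upper bound remains strictly below $e^{-1}$ throughout the relevant range, so $d(T^*) < e^{-1}$; by continuity this contradicts $T^* < T$, forcing $T^* = T$. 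With the bootstrap closed, the two-sided estimate extends to the full interval $[0,T]$ and the lemma is proved.
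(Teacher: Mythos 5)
Your core argument is exactly the paper's: the paper's entire proof is ``comparison with the ODE $x'(t)=\pm Nx(t)\log\frac{1}{x(t)}$, whose solution is $x(t)=x_0^{e^{\mp Nt}}$,'' and your change of variables $f=\log(1/d)$, the linear inequality $\abs{\dot f}\leq Nf$, and the Gronwall step are just that comparison written out. That part is correct.

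The place where you go beyond the paper --- the continuation argument --- is the right thing to worry about, but the step you use to close it does not actually work. You claim that the hypothesis $\abs{x-y}<e^{-NT}$ forces the upper bound $d(t)\leq\abs{x-y}^{e^{-Nt}}$ to stay strictly below $e^{-1}$ on $[0,T]$. Write $\abs{x-y}=e^{-a}$ with $a>NT$; the upper bound at time $t$ is $e^{-ae^{-Nt}}$, which is below $e^{-1}$ precisely when $a>e^{Nt}$. At $t=T$ this requires $a>e^{NT}$, whereas the hypothesis only gives $a>NT$, and $e^{NT}>NT$ always. Concretely, for $N=T=1$ and $\abs{x-y}$ just below $e^{-1}$, the bound at $t=1$ is about $e^{-e^{-1}}>e^{-1}$, so the bootstrap does not close and the log-Lipschitz hypothesis is no longer available along the trajectory. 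The correct smallness threshold for this argument is $\abs{x-y}<e^{-e^{NT}}$ (equivalently, one must restrict to times with $e^{Nt}<\log\frac{1}{\abs{x-y}}$). To be fair, this is a defect of the lemma as stated rather than of your strategy --- the paper's one-line proof silently skips the same issue --- but since you explicitly identified this as the main obstacle and then resolved it with a false implication, it counts as a genuine gap in your write-up. Everything else is fine.
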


\begin{proof}
	This follows directly from the comparison with ODE
	\begin{equation*}
		\left\{ \begin{aligned}
			x'(t)&= \pm Nx(t)\log \frac{1}{x(t)}, & (0 \leq t \leq T), \\
			x(0)&=x_0 ,& (x_0 < e^{-NT}),
 		\end{aligned} \right.
	\end{equation*}
	whose solution is $x(t)= x_0^{e^{\mp Nt}}$.
\end{proof}

To prove Theorem \ref{main thm 1 sub}, we now derive the equation for perturbation term $g$ in \eqref{solution form vorticity}. Since $(w,u)$ is a vorticity-velocity pair for 2D Euler equation, solution form \eqref{solution form vorticity}, \eqref{solution form velocity} give
\begin{equation*}
	(\underbrace{w_s(x-\phi^*(t))+g}_{=w})_t + ( \underbrace{u_s(x-\phi^*(t))+u_g}_{=u}) \cdot \nabla (w_s(x-\phi^*(t))+g) = 0.
\end{equation*}
Then, we have
\begin{equation} \label{eq2}
	g_t+u \cdot \nabla g = - \left( u_g - \frac{d \phi^*}{dt}(t) \right) \cdot \nabla w_s(x-\phi^*(t)) -u_s(x-\phi^*(t)) \cdot \nabla w_s(x-\phi^*(t)).
\end{equation}
Since $w_s$ is radial and $w_s(x)= \log \log \frac{1}{\abs{x}}$ for $0<\abs{x}<e^{-2}$, the explicit formula $u_s(x)= \left( \frac{1}{\abs{x}}\int_0^{\abs{x}}r \log \log \frac{1}{r}\, dr \right)\mathbf{e}_\theta$ gives
\begin{equation*}
	\lim_{x \rightarrow 0} \frac{\abs{u_s(x)}}{\abs{x}  \log \log \frac{1}{\abs{x}}}=\frac{1}{2},
\end{equation*}
which yields $u_s(0)=0$. Therefore,
\begin{equation}\label{eq3}
	\frac{d\phi^*}{dt}(t) = u(\phi^*(t),t) \underset{\eqref{solution form velocity}}{=} \underbrace{u_s(\phi^*(t)-\phi^*(t)) }_{=0}+u_g(\phi^*(t),t)=u_g(\phi^*(t),t).
\end{equation}
Since radial symmetry of $w_s$ makes $u_s$ and $\nabla w_s$ orthogonal, \eqref{eq2} could be simplified to
\begin{equation} \label{equation for g original}
	g_t+u \cdot \nabla g = -(u_g(x,t) -u_g(\phi^*(t),t)) \cdot \nabla w_s (x-\phi^*(t)).
\end{equation}
Here, just for simplicity, suppose initial perturbation term $g_0$ is symmetric with respect to the origin. Then, initial velocity field $u_0$ satisfies $u_0(-x)=-u_0(x)$, so does $u$ by a symmetric property of the Euler equation. As $u(0,t)=0$ for all $t \in [0,\infty)$, $\phi^*(t) \equiv 0$ for all $t \in [0,\infty)$ so that \eqref{eq2} could be simplified to
\begin{equation} \label{equation for g}
	g_t+u \cdot \nabla g = -u_g \cdot \nabla w_s.
\end{equation}
This is nothing but a transport equation with a forcing term. To prove \ref{main thm 1 sub} using \eqref{equation for g}, one have to a priori expect that for compactly supported symmetric $g\in \cphi$, forcing term on the right hand side also belongs to space $\cphi$. Heuristically, this is plausible as $\abs{u_g(x)} \approx \abs{x} \logonealpha{x}$ by \eqref{vor to vel} and $\abs{\nabla w_s(x)} \approx \frac{1}{\abs{x} \log \frac{1}{\abs{x}}}$ near the origin\footnote{Even though we remove the assumption of symmetry on $g_0$, this cancellation of `zero times infinity' still occurs near $\phi^*(t)$ on the right hand side of \eqref{equation for g original}. This guarantees that similar proof goes on without assumption on symmetry.}. More precisely, we prove
\begin{lemma} \label{forcing term}
	Let $\alpha \in (0,1)$ be given. Suppose a vector field $v \in \cpsi$ satisfies $v(0)=0$. Then, there exists a constant $C>0$ such that the scalar function $v \cdot \nabla w_s \in \cphi$ with
	\begin{equation*}
		\nrm{v \cdot \nabla w_s}_{\cphi} \leq C \nrm{v}_{\cpsi}.
	\end{equation*}
\end{lemma}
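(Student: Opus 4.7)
The plan is to decompose the estimate into an $L^\infty$ bound and a modulus-of-continuity seminorm bound, and in each to separate the singular region $\abs{x} < e^{-2}$ (where $w_s(x) = \log\log\frac{1}{\abs{x}}$ is unbounded) from the region where $w_s$ is smooth and compactly supported. For the $L^\infty$ bound, near the origin $\abs{\nabla w_s(x)} \aeq (\abs{x}\log\frac{1}{\abs{x}})^{-1}$, while the hypotheses $v(0)=0$ and $v \in \cpsi$ give $\abs{v(x)} \leq \nrm{v}_{\cpsi}\psi_\alpha(\abs{x})$, so the cancellation yields $\abs{v(x)\cdot\nabla w_s(x)} \aleq \nrm{v}_{\cpsi}(\log\frac{1}{\abs{x}})^{-\alpha}$, which is uniformly bounded on the support of $w_s$. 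Away from the origin the bound is trivial since $\nabla w_s$ is smooth with bounded support.

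For the seminorm estimate with $\abs{x-y}$ small, I would normalize to $\abs{y} \leq \abs{x}$ and split into two sub-cases. When $\abs{x-y} \leq \abs{x}/2$, both points lie at comparable distance from the origin and the segment joining them avoids $\set{\abs{z} < \abs{x}/2}$; I decompose
\begin{equation*}
	v(x)\cdot\nabla w_s(x) - v(y)\cdot\nabla w_s(y) = (v(x)-v(y))\cdot\nabla w_s(x) + v(y)\cdot(\nabla w_s(x) - \nabla w_s(y)),
\end{equation*}
estimating the first summand by $\nrm{v}_{\cpsi}\psi_\alpha(\abs{x-y})\cdot(\abs{x}\log\frac{1}{\abs{x}})^{-1}$ and the second by $\nrm{v}_{\cpsi}\psi_\alpha(\abs{y})\cdot\abs{x-y}(\abs{x}^2\log\frac{1}{\abs{x}})^{-1}$, using the Hessian bound $\abs{D^2 w_s(z)} \aeq (\abs{z}^2\log\frac{1}{\abs{z}})^{-1}$ along the segment. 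When $\abs{x-y} > \abs{x}/2$, both $\abs{x}$ and $\abs{y}$ are comparable to $\abs{x-y}$, and the triangle inequality combined with the pointwise bound from the $L^\infty$ step closes the estimate.

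The main obstacle is the logarithmic bookkeeping: each bound above must be shown to be $\aleq \nrm{v}_{\cpsi}(\log\frac{1}{\abs{x-y}})^{-\alpha}$. After cancelling the $\alpha$-powers of the logarithms, both estimates in the first sub-case reduce to inequalities of the form $r(\log\frac{1}{r})^\beta \leq s(\log\frac{1}{s})^\beta$ with $r = \abs{x-y} \leq s = \abs{x}$ and $\beta \in \set{\alpha, 1}$, and this follows from monotonicity of $t \mapsto t(\log\frac{1}{t})^\beta$ on a sufficiently small neighborhood of $0$. The second sub-case reduces to $\log\frac{1}{\abs{x}} \geq \log\frac{1}{\abs{x-y}} - \log 3$, which is immediate for $\abs{x-y}$ small. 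These monotonicity checks are precisely the analytic reason the MOC $\varphi_\alpha$ is preserved by the map $v \mapsto v\cdot\nabla w_s$, consistent with row (iii) of the table in Section~\ref{subsec: Idea of the proof}.

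Finally, for pairs of points both in the smooth region of $w_s$ (say $\abs{x},\abs{y} \geq e^{-3}$), $\nabla w_s$ is $C^1$ and the estimate follows from the continuous embedding $\cpsi \hookrightarrow \cphi$ (since $\psi_\alpha(r) \aleq \varphi_\alpha(r)$ for small $r$); mixed pairs with one point near the origin and one far force $\abs{x-y}$ to be bounded below and thus reduce to the $L^\infty$ bound.
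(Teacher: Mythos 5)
Your proposal is correct and follows essentially the same route as the paper: the same pointwise cancellation $\abs{v(z)\cdot\nabla w_s(z)} \aleq \nrm{v}_{\cpsi}\left(\log\frac{1}{\abs{z}}\right)^{-\alpha}$ from $v(0)=0$, the same case split on whether $\abs{x-y}$ is small or large compared to the distance of the farther point from the origin, and the same product-rule decomposition with the Hessian bound in the comparable-points case. The monotonicity reductions you identify are exactly the inequalities the paper verifies.
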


\begin{proof}
	Suppose two points $x$ and $y$ are given with $\abs{x-y} \leq \frac{e^{-2}}{2}$. Without loss of generality, we assume $\abs{x} \leq \abs{y}$. We divide the case.
	
	\bigskip
	$\bullet$ \underline{Case I}. $\abs{y} \geq e^{-2}$
	
	Note that $\nabla w_s$ is compactly supported smooth function on $\bbR^2 \backslash B \left( 0, \frac{e^{-2}}{2} \right)$ so that
	\begin{equation*}
		\nrm{\nabla w_s}_{\cphi \left(\bbR^2 \backslash B \left( 0, \frac{e^{-2}}{2}\right)\right)} \coloneqq M< \infty.
	\end{equation*}
	 Then, we have
	\begin{align*}
		&\frac{\abs{v(x) \cdot \nabla w_s(x) - v(y) \cdot \nabla w_s(y)}}{\logalpha{x-y}} \\
		&\leq \frac{\abs{v(x)-v(y)} \cdot \abs{\nabla w_s(x)}}{\abs{x-y} \logonealpha{x-y}} \abs{x-y} \log \frac{1}{\abs{x-y}}+\abs{v(y)} \frac{\abs{\nabla w_s(x)-\nabla w_s(y)}}{\logalpha{x-y}} \\
		&\leq \left( \frac{e^{-2}}{2} \log (2e^2 ) + M \right) \nrm{v}_{\cpsi}.
	\end{align*}
	
	\bigskip
	
	$\bullet$ \underline{Case II}. $\abs{y} \leq e^{-2}$
	
	Note that for all $z \in B \left( 0, e^{-2} \right)$,
	\begin{equation} \label{eq4}
		\frac{\abs{v(z) \cdot \nabla w_s(z)}}{\logalpha{z}} \underset{\substack{\eqref{def of ws} }}{\leq} \frac{\nrm{v}_{\cpsi}\abs{z} \logonealpha{z} \cdot \frac{1}{\abs{z} \log \frac{1}{\abs{z}}}}{\logalpha{z}} = \nrm{v}_{\cpsi}.
	\end{equation}
	Now, we divide the Case II into two cases based on size of $\abs{x-y}$ and $\abs{y}$.
	
	\bigskip
	
	$\bullet$ \underline{Case II-(i)}. $\abs{y}\leq e^{-2},~~ \abs{x-y} \geq \frac{\abs{y}}{2}$
	
	In this case, we estimate $\cphi$-norm of $v \cdot \nabla w_s$ as follows,
	\begin{align*}
		\frac{\abs{v(x) \cdot \nabla w_s(x) - v(y) \cdot \nabla w_s(y)}}{\logalpha{x-y}} &\leq \frac{\abs{v(x) \cdot \nabla w_s(x)}}{\logalpha{x-y}}+\frac{\abs{v(y) \cdot \nabla w_s(y)}}{\logalpha{x-y}} \\
		&\leq \frac{\abs{v(x) \cdot \nabla w_s(x)}}{\logalpha{x}} \cdot \frac{\logalpha{x}}{\logalpha{x-y}} + \frac{\abs{v(y) \cdot \nabla w_s(y)}}{\logalpha{y}} \cdot \frac{\logalpha{y}}{\logalpha{x-y}}.
	\end{align*}
	Combining inequality \eqref{eq4} with
	\begin{equation*}
		\frac{\logalpha{x}}{\logalpha{x-y}} \leq 2^\alpha , \qquad \frac{\logalpha{y}}{\logalpha{x-y}} \leq 2^\alpha,
	\end{equation*}
	which is implied by the condition $\abs{x-y}\geq \frac{\abs{y}}{2}$, we have
	\begin{equation*}
		\frac{\abs{v(x) \cdot \nabla w_s(x) - v(y) \cdot \nabla w_s(y)}}{\logalpha{x-y}} \leq 2^{\alpha+1} \nrm{v}_{\cpsi}.
	\end{equation*}
	
	\bigskip
	
	$\bullet$ \underline{Case II-(ii)}. $\abs{y}\leq e^{-2},~~ \abs{x-y} \leq \frac{\abs{y}}{2}$
	
	In this case, we have
	\begin{equation*}
		\frac{\abs{v(x) \cdot \nabla w_s(x) - v(y) \cdot \nabla w_s(y)}}{\logalpha{x-y}} \leq \frac{\abs{v(x)-v(y)}}{\logalpha{x-y}} \cdot \abs{\nabla w_s(x)} + \abs{v(y)} \cdot \frac{\abs{\nabla w_s(x) - \nabla w_s (y)}}{\logalpha{x-y}}.
	\end{equation*}
	Note that $\abs{x-y}\leq \frac{\abs{y}}{2}$ implies $\abs{x-y} \leq \abs{x}$ and $\abs{y} \leq 2\abs{x}$. Then,
	\begin{equation*}
		\frac{\abs{v(x)-v(y)}}{\logalpha{x-y}} \cdot \abs{\nabla w_s(x)} \underset{\eqref{def of ws} }{=} \frac{\abs{v(x)-v(y)}}{\abs{x-y} \logonealpha{x-y}} \cdot \frac{\abs{x-y} \log \frac{1}{\abs{x-y}}}{\abs{x} \log \frac{1}{\abs{x}}}\leq \nrm{v}_{\cpsi}.
	\end{equation*}
	Also,
	\begin{align*}
		\abs{v(y)} \cdot \frac{\abs{\nabla w_s(x) - \nabla w_s (y)}}{\logalpha{x-y}} &\leq \frac{\abs{v(y)}}{\logalpha{x-y}} \cdot \abs{x-y} \frac{2}{\abs{x}^2 \log \frac{1}{\abs{x}}} \\
		&= \frac{2 \abs{v(y)}}{\abs{y} \logonealpha{y}}\cdot \frac{\abs{y} \logonealpha{y}}{\abs{x} \logonealpha{x}} \cdot \frac{\abs{x} \logalpha{x}}{\abs{x-y} \logalpha{x-y}}\\
		&\leq 4 \nrm{v}_{\cpsi}.
	\end{align*}
	As we have calculated $\cphi$-seminorm so far, we now calculate $L^\infty$-norm of $v \cdot \nabla w_s$. For $0 < \abs{z} < e^{-2}$,
	\begin{equation*}
		\abs{v(z) \cdot \nabla w_s (z)} \underset{\eqref{eq4} }{\leq} \nrm{v}_{\cpsi} \sup_{z \in B (0, e^{-2})} \logalpha{z} \leq \nrm{v}_{\cpsi}.
	\end{equation*}
	For $\abs{z} \geq e^{-2}$,
	\begin{equation*}
		\abs{v(z) \cdot \nabla w_s(z)} \leq \nrm{v}_{\cpsi} \sup_{z \in \bbR^2 \backslash B(0,e^{-2})} \abs{\nabla w_s(z)}.
	\end{equation*}
	Combining above inequalities implies the Lemma \ref{forcing term}.
\end{proof}

Gathering above elementary lemmas, we can prove the following proposition. Here, $LL$ is the space of log-Lipschitz function, i.e.,
\begin{equation*}
	LL \coloneqq \set{f \, \Big\vert \, \nrm{f}_{LL} \coloneqq \nrm{f}_{\infty}+\sup_{\abs{x-y}<e^{-1}}\frac{
	\abs{f(x)-f(y)}}{\abs{x-y} \log \frac{1}{\abs{x-y}}}<\infty}.
\end{equation*}

\begin{proposition} \label{consistency lemma}
	Suppose $v \in L^\infty([0,T]; \cpsi)$ with $v(0,t)=0$ for $t\in [0,T]$ and $u \in L^\infty([0,T];LL)$. Then, there exists $C=C(\alpha, T,\nrm{u}_{L^\infty_T LL})>0$ such that the unique weak solution for forced transport equation
	\begin{equation} \label{transport eq}
		\left\{ \begin{aligned}
			& g_t+u \cdot \nabla g = - v \cdot \nabla w_s, \\
			& g(\cdot,0)=g_0
		\end{aligned} \right.
	\end{equation}
	belongs to $L^\infty([0,T]; \cphi)$ with
	\begin{equation} \label{consistency}
		\nrm{g(\cdot,t)}_{\cphi} \leq C(\nrm{g_0}_{\cphi} + t \nrm{v}_{L^\infty_t\cpsi}).
	\end{equation}
\end{proposition}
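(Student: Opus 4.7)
The plan is to exploit the Lagrangian nature of the forced transport equation together with Lemma 2.3 and the Osgood-type estimate of Lemma 2.2. Let $\Phi_t$ denote the flow map generated by $u$, which exists and is a bi-continuous homeomorphism because $u$ is log-Lipschitz. Writing $N \coloneqq \nrm{u}_{L^\infty_T LL}$, the unique weak solution of \eqref{transport eq} is given along characteristics by
\begin{equation*}
  g(\Phi_t(x),t) \;=\; g_0(x) - \int_0^t (v\cdot \nabla w_s)(\Phi_s(x),s)\,ds.
\end{equation*}
Equivalently, if $\Psi_{s,t}$ denotes the flow from time $t$ back to time $s$ (so $\Phi_t^{-1}=\Psi_{0,t}$), then $g(y,t)= g_0(\Psi_{0,t}(y))-\int_0^t (v\cdot \nabla w_s)(\Psi_{s,t}(y),s)\,ds$.

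The $L^\infty$ bound is immediate: since $v(0,s)=0$, Lemma \ref{forcing term} gives $\|v(\cdot,s)\cdot \nabla w_s\|_\infty \leq C\nrm{v(\cdot,s)}_{\cpsi}$, so integrating along characteristics yields
\begin{equation*}
  \nrm{g(\cdot,t)}_\infty \leq \nrm{g_0}_\infty + Ct\,\nrm{v}_{L^\infty_t\cpsi}.
\end{equation*}
For the seminorm part, fix $y_1,y_2$ with $|y_1-y_2|$ small enough (say $<e^{-NT}$), and set $x_i^{(s)}\coloneqq \Psi_{s,t}(y_i)$. Applying Lemma \ref{access rate} to the reverse flow between times $t$ and $s$ gives the crucial bound
\begin{equation*}
  |x_1^{(s)}-x_2^{(s)}| \leq |y_1-y_2|^{e^{-N(t-s)}},
\end{equation*}
and in particular $|x_1^{(0)}-x_2^{(0)}|\leq |y_1-y_2|^{e^{-Nt}}$.

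Now I plug these into the $\cphi$-modulus. The whole proof rests on the algebraic identity
\begin{equation*}
  \varphi_\alpha(r^a)=\Big(\log \tfrac{1}{r^a}\Big)^{-\alpha}=a^{-\alpha}\,\varphi_\alpha(r),
\end{equation*}
so composing with the flow costs only an $e^{N(t-s)\alpha}$ factor — the $\varphi_\alpha$ family is quasi-invariant under log-Lipschitz flow, which is exactly why the scheme closes at this regularity. Concretely, for the data term,
\begin{equation*}
  |g_0(x_1^{(0)})-g_0(x_2^{(0)})| \leq \nrm{g_0}_{\cphi}\varphi_\alpha\!\bigl(|y_1-y_2|^{e^{-Nt}}\bigr)= e^{Nt\alpha}\nrm{g_0}_{\cphi}\varphi_\alpha(|y_1-y_2|),
\end{equation*}
and for the forcing term, Lemma \ref{forcing term} combined with the same composition estimate gives
\begin{equation*}
  |(v\cdot \nabla w_s)(x_1^{(s)},s)-(v\cdot \nabla w_s)(x_2^{(s)},s)| \leq C e^{N(t-s)\alpha}\nrm{v(\cdot,s)}_{\cpsi}\,\varphi_\alpha(|y_1-y_2|).
\end{equation*}
Integrating in $s$ and noting $\int_0^t e^{N(t-s)\alpha}\,ds \leq t\,e^{NT\alpha}$ yields the $\cphi$-seminorm bound $\leq C(\nrm{g_0}_{\cphi}+t\nrm{v}_{L^\infty_t\cpsi})$ with $C=C(\alpha,T,N)$.

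Two small technical points need to be handled. First, the seminorm in $\cphi$ is only taken over $|y_1-y_2|<c$ for some fixed $c$; pairs with $|y_1-y_2|\geq c$ are absorbed by the $L^\infty$ bound using $\varphi_\alpha(c)>0$. Second, one must verify that $|x_1^{(s)}-x_2^{(s)}|$ stays below the threshold $c$ appearing in the $C^{\varphi_\alpha}$-norm of the forcing term; this is automatic by shrinking the admissible $|y_1-y_2|$ in terms of $NT$. Beyond these routine adjustments, the only genuine content is the compatibility between the log-Lipschitz flow distortion and the $\varphi_\alpha$ modulus, which I view as the main (and entirely algebraic) heart of the argument.
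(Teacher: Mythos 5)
Your proposal is correct and follows essentially the same route as the paper's proof: the representation of $g$ along characteristics, the double-exponential separation bound for log-Lipschitz flows combined with the quasi-invariance identity $\varphi_\alpha(r^a)=a^{-\alpha}\varphi_\alpha(r)$, and Lemma \ref{forcing term} to control the forcing. The only cosmetic difference is that you parametrize by the endpoints at time $t$ and flow backward (yielding the factor $e^{N(t-s)\alpha}$), while the paper parametrizes by the points at time $0$ and flows forward (yielding $e^{2N\alpha t}$); both give a constant depending only on $\alpha$, $T$, and $\nrm{u}_{L^\infty_T LL}$.
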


\begin{proof}
	Let $\nrm{u}_{L^\infty_T LL} \eqqcolon N$ and take two points $X, Y$ with $\abs{X-Y}\leq e^{-NT}$. Also, fix a time $t \in [0,T]$. Then, there exists $x, y \in \bbR^2$ such that $\Phi(x,t)=X, \Phi(y,t)=Y$ with $\abs{x-y} \leq e^{-1}$, where $\Phi: \bbR^2 \times [0,T] \rightarrow \bbR^2$ is a flow map generated by $u$. Since the solution for \eqref{transport eq} could be explicitly written by
	\begin{equation} \label{eq5}
		g(\Phi(z,t),t) = g_0(z)-\int_0^t v(\Phi(z,s),s) \cdot \nabla w_s (\Phi(z,s),s) \, ds,
	\end{equation}
	we have
	\begin{equation} \label{eq6}
		\begin{aligned}
			&\frac{\abs{g(\Phi(x,t),t)-g(\Phi(y,t),t)}}{\logalpha{\Phi(x,t)-\Phi(y,t)}} \leq \frac{\abs{g_0(x)-g_0(y)}}{\logalpha{x-y}} \cdot \frac{\logalpha{x-y}}{\logalpha{\Phi(x,t)-\Phi(y,t)}}\\ &+ \int_0^t \underbrace{\frac{\abs{v(\Phi(x,s),s) \cdot \nabla w_s (\Phi(x,s),s)-v(\Phi(y,s),s) \cdot \nabla w_s (\Phi(y,s),s)}}{\logalpha{\Phi(x,s)-\Phi(y,s)}}}_{
			\underset{\text{Lem }\ref{forcing term}}{\leq} C \sup_{0 \leq s \leq t} \nrm{v(\cdot,s)}_{\cpsi}} \cdot \frac{\logalpha{\Phi(x,s)-\Phi(y,s)}}{\logalpha{\Phi(x,t)-\Phi(y,t)}} \, ds 	
		\end{aligned}
	\end{equation}
	As we have 
	\begin{equation*}
		\frac{\logalpha{x-y}}{\logalpha{\Phi(x,t)-\Phi(y,t)}} \leq e^{N \alpha t}, \qquad \frac{\logalpha{\Phi(x,s)-\Phi(y,s)}}{\logalpha{\Phi(x,t)-\Phi(y,t)}} \leq e^{2N\alpha t},
	\end{equation*}
	by \eqref{access rate}, \eqref{eq6} leads to
	\begin{equation} \label{eq7}
		\sup_{\abs{X-Y}\leq e^{-NT}}\frac{\abs{g(X,t)-g(Y,t)}}{\logalpha{X-Y}} \leq e^{N \alpha t} \nrm{g_0}_{\cphi} + Cte^{2N\alpha t}\nrm{v}_{L^\infty_t \cpsi},
	\end{equation}
	 where $C$ is the constant in Lemma \ref{forcing term}.
	 
	 Similarly, from \eqref{eq5},
	 \begin{equation} \label{eq8}
	 	\nrm{g(\cdot,t)}_{\infty} \leq \nrm{g_0}_{\infty} + t \nrm{v \cdot \nabla w_s}_{L^\infty_t L^\infty} \underset{\text{Lem }\ref{forcing term} }{\leq} \nrm{g_0}_{\cphi}+Ct \nrm{v}_{L^\infty_t \cpsi}.
	 \end{equation}
	 Combining \eqref{eq7} and \eqref{eq8} implies Proposition \ref{consistency lemma}.
\end{proof}

\subsection{Proof of the Main Theorem: Conservation Part}
\label{subsec: Proof of the Main Theorem: Conservation Part}

In this subsection, we prove Theorem \ref{main thm 1 sub} and Theorem \ref{main thm 1} using the results in Section \ref{subsec: Elementary Lemmas}. The proof is quite standard.

\bigskip

\underline{\textit{Proof of Theorem \ref{main thm 1 sub}}}.

Let $T<\infty$ be given. Define a sequence $\set{g^{(n)}}_{n \in \bbN}$ inductively by setting $g^{(0)}(\cdot,t) \equiv g_0$ and $g^{(n+1)}$ being a solution of forced transport equation
\begin{equation} \label{iteration scheme}
	\left\{ \begin{aligned}
		& g^{(n+1)}_t + u \cdot \nabla g^{(n+1)}=-u_{g^{(n)}} \cdot \nabla w_s, \\
		& g^{(n+1)}(x,0)=g_0(x),
	\end{aligned} \right.
\end{equation}
where $u \in L^\infty ([0,T]; LL)$ is the fixed velocity field which is the solution of 2D Euler equation
\begin{equation*}
	\left\{ \begin{aligned}
		& w_t+ u \cdot \nabla w=0, \\
		& w(\cdot,0)=w_s + g_0.
	\end{aligned} \right.
\end{equation*}
Note that if $g^{(n)}\in \cphi$ is symmetric with respect to the origin for all $t \in [0,T]$, then $g^{(n+1)}$ is well-defined through Lemma \ref{vor to vel lemma} and Proposition \ref{consistency lemma}, and symmetric with respect to the origin. Therefore,  $\set{g^{(n)}}_{n \in \bbN}$ is well-defined for all $n \in \bbN$. Since $g^{(n+1)} - g^{(n)}$ satisfies the forced transport equation
\begin{equation*}
	\left\{ \begin{aligned}
		& (g^{(n+1)} - g^{(n)})_t + u \cdot \nabla (g^{(n+1)} - g^{(n)}) = -(u_{g^{(n+1)}}-u_{g^{(n)}}) \cdot \nabla w_s, \\
		& (g^{(n+1)} - g^{(n)})(x,0) =0.
	\end{aligned} \right.
\end{equation*}
Proposition \ref{consistency lemma} and Lemma \ref{vor to vel lemma} implies that there exists a constant $C=C(\alpha, T, \nrm{u}_{L^\infty_T LL}, \abs{\supp g_0})$ such that
\begin{equation} \label{eq9}
	\nrm{g^{(n+1)} - g^{(n)}}_{L^\infty_t \cphi} \underset{\text{Lem } \ref{consistency lemma} }{\leq}Ct \nrm{u_{g^{(n)}}-u_{g^{(n-1)}}}_{L^\infty_t \cpsi} \underset{\eqref{vor to vel} }{\leq} Ct \nrm{g^{(n+1)} - g^{(n)}}_{L^\infty_t \cphi}.
\end{equation}
If we take $T_0>0$ sufficiently small so that $CT_0 \leq \frac{1}{2}$, \eqref{eq9} implies that  $\set{g^{(n)}}_{n \in \bbN}$ is a Cauchy sequence in $L^\infty_{T_0}\cphi$. Therefore, there exists $h \in L^\infty_{T_0}\cphi$ so that $g^{(n)} \rightarrow h$ in $L^\infty_{T_0}\cphi$. Since $u \in L^\infty_T L^\infty$ and $w_s$ has compact support regardless of $n \in \bbN$, we can iterate this scheme so that maximal convergence time could be extended up to $T$ like in the proof of classical Cauchy-Lipschitz theorem.

Also, since
\begin{equation*}
	g^{(n)} \rightarrow h \qquad \text{and} \qquad u_{g^{(n)}} \cdot \nabla w_s \xrightarrow[\text{Lem }\ref{forcing term} ]{} u_h \cdot \nabla w_s \qquad \text{in }~~ L^\infty_T L^\infty, 
\end{equation*}
it is straightforward to show that $h$ is the weak solution of \eqref{equation for g} by taking $n \rightarrow \infty$ in the weak formulation of \eqref{iteration scheme}
\begin{equation*}
	\int_{\bbR^2} \left( \phi(x,T) g^{(n+1)}(x, T) - \phi(x,0) g_0 \right)\, dx + \int_0^T \int_{\bbR^2} \left( -\frac{D \phi}{Dt}g^{(n+1)} + (u_{g^{(n)}}\cdot \nabla w_s)\phi \right)\, dxdt=0
\end{equation*}
for a test function $\phi \in C^\infty_c (\bbR^2 \times [0,T])$.

Lastly, we conclude our proof by showing that the unique solution $g$ of \eqref{equation for g} whose regularity is guaranteed to be at most $L^\infty_TL^\infty$ at this point, is equal to $h \in L^\infty_T\cphi$. Since both $g$ and $h$ are weak solutions of \eqref{equation for g}, $g-h$ satisfies the following transport equation
\begin{equation*}
	\left\{ \begin{aligned}
		&(g-h)_t + u \cdot \nabla (g-h) = - (u_g-u_h) \cdot \nabla w_s, \\
		& (g-h)(\cdot,0)=0.
	\end{aligned} \right.
\end{equation*}
Thus, we have $L^\infty$- estimate
\begin{equation} \label{eq10}
	\nrm{g-h}_{L^\infty_t L^\infty} \leq \int_0^t \nrm{(u_g-u_h)(\cdot,s) \cdot \nabla w_s(\cdot,s)}_{L^\infty} \, ds.
\end{equation}
Since $u_g$ and $u_h$ are log-Lipschitz and $u_g(0)=u_h(0)=0$ by symmetry, we have for $z \in B(0,e^{-2})$,
\begin{equation*}
	\abs{(u_g-u_h)(z) \cdot \nabla w_s(z))} \underset{\eqref{def of ws}}{\leq} C \nrm{g-h}_\infty \abs{z} \log \frac{1}{\abs{z}} \cdot \frac{1}{\abs{z} \log \frac{1}{\abs{z}}} = C \nrm{g-h}_{\infty},
\end{equation*}
where $C$ is a constant depending only on $\abs{\supp (g-h)}$ which is already known to be bounded. Also, for $z \in \bbR^2 \backslash B(0,e^{-2})$,
\begin{equation*}
	\abs{(u_g-u_h)(z) \cdot \nabla w_s(z)} \leq C \nrm{g-h}_\infty \sup_{z \in \bbR^2 \backslash B(0,e^{-2})} \abs{\nabla w_s(z)}.
\end{equation*}
Therefore, there exists a absolute constant $C>0$ such that
\begin{equation} \label{eq11}
	\nrm{(u_g-u_h)\cdot \nabla w_s}_\infty \leq C \nrm{g-h}_{\infty}.
\end{equation}
Combining \eqref{eq10}, \eqref{eq11} with Gr\"onwall inequality leads to $g=h$ on $[0,T]$. \hfill $\square$

\bigskip

\underline{\textit{Proof of Theorem \ref{main thm 1}}}.

In the proof of Theorem \ref{main thm 1 sub}, the reason we impose symmetry condition on $g_0$ is to apply Lemma \ref{forcing term} and Proposition \ref{consistency lemma}. The vanishing velocity condition in Lemma \ref{forcing term} is used to cancel singularity of $\nabla w_s$ at the origin. However, investigating general equation for $g$ \eqref{equation for g original}, where there is no symmetry condition on $g$, it is immediate to see that vanishing velocity condition is redundancy since $u_g(x,t) - u_g(\phi^*(t),t)$ automatically has zero velocity at the singular point of $\nabla w_s(x-\phi^*(t))$. For this reason, similar proof goes on with obvious modification to Theorem \ref{main thm 1 sub} without any symmetry condition.$\hfill \square$

\section{Breakdown of Modulus of Continuity}
\label{sec: Breakdown of Modulus of Continuity}

In this section, we prove Theorem \ref{main thm 2}. The first thing we have to do is to clarify initial perturbation $g_0$.

\subsection{Construction of Initial Data and Basic Properties of the Solution}
\label{subsec: Construction of Initial Data and Basic Properties of the Solution}

Let $\eta \in C^\infty_c$ be a standard mollifier and $\eta_\varepsilon \coloneqq \frac{1}{\varepsilon^2}\eta \left( \frac{\cdot}{\varepsilon} \right)$. For a function $c_0$ defined by
\begin{equation*}
	c_0(x)= \begin{cases}
			1, & (x\in [21,23] \times [21,23]  ~~ \cup ~~ [-23, -21] \times [-23, -21]), \\
			-1, & (x\in [-23,-21] \times [21,23]  ~~ \cup ~~ [21,23] \times [-23,-21]), \\
			0, & (\text{otherwise}),
		\end{cases}
\end{equation*}
which is symmetric with respect to the origin, define $g_0$ by
\begin{equation} \label{initial data}
	g_0 \coloneqq \eta_\varepsilon * c_0.
\end{equation}
Then, an elementary calculation shows that we can choose small $\varepsilon>0$ such that there exist $K, \delta >0$ so that $u_{g_0}^2$, the second component of $u_{g_0}$, satisfies
\begin{equation} \label{Lip initial}
	-(K+1)r\leq u_{g_0}^2(0,r) \leq -Kr
\end{equation}
for all $0<r<\delta$.

Let $w$ be the unique weak solution of 2D Euler equation
\begin{equation} \label{equ1}
	\left\{ \begin{aligned}
		& w_t+u \cdot \nabla w=0, \\
		& w(\cdot,0)=w_s(x) +g_0.
	\end{aligned} \right.
\end{equation}
Then, by Theorem 3 of \cite{Drivas-Propagation}, the solution $w(x,t)$ could be written as the form of
\begin{equation*}
	w(x,t)=w_s(x-\phi^*(t))+g(x,t), 
\end{equation*}
where $\phi^*(t)$ is the particle trajectory of the origin and $g$  belongs to $ L^\infty_{\textnormal{loc}} ([0,\infty), L^\infty)$. From now on, by $w, u$ and $g$, \textbf{we always mean above fixed solution, not a general solution}.

\begin{lemma} \label{symmetry lemma}
	The velocity field $u(x,t)$ satisfies $u(-x,t)=-u(x,t)$ for all $x \in \bbR^2, t \in [0,\infty)$ so that $u(0,t)=0$ for all $t \in [0,\infty)$. Therefore, $\phi^*(t) \equiv 0$ so that the solution can be written in the form of
	\begin{equation*}
		w(x,t)=w_s(x) + g(x,t).
	\end{equation*} 
\end{lemma}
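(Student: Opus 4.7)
The plan is to show the symmetry $w_0(-x) = w_0(x)$ is preserved by 2D Euler and to deduce $u(0,t) = 0$, from which the fixed-point property of the origin follows by uniqueness of trajectories.

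First I would verify that the initial data is symmetric. The function $c_0$ is constructed so that $c_0(-x) = c_0(x)$ by direct inspection of the four squares and their signs, and the standard mollifier $\eta_\varepsilon$ is even, so $g_0 = \eta_\varepsilon \ast c_0$ satisfies $g_0(-x) = g_0(x)$. Since $w_s$ is radial, the initial vorticity $w_0 = w_s + g_0$ also satisfies $w_0(-x) = w_0(x)$.

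Next I would exploit the point symmetry of 2D Euler. If $w$ is a weak solution with velocity $u = \nabla^\perp \Delta^{-1} w$, then $\tld{w}(x,t) \coloneqq w(-x,t)$ is again a weak solution with associated velocity $\tld u(x,t) = -u(-x,t)$; this follows from a direct change of variables $y \mapsto -y$ in the Biot--Savart integral \eqref{Biot-Savart}, which flips both the kernel and the integration variable. Since $\tld w(x,0) = w_0(-x) = w_0(x)$, uniqueness of the weak solution granted by Proposition~1.1 forces $\tld w = w$ and hence $u(-x,t) = -u(x,t)$ for all $(x,t) \in \bbR^2 \times [0,\infty)$. In particular, $u(0,t) = 0$.

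Finally I would conclude that $\phi^*(t) \equiv 0$. The trajectory of the origin satisfies $\dot{\phi}^*(t) = u(\phi^*(t),t)$ with $\phi^*(0)=0$. The velocity field $u$ decomposes as $u_s(x-\phi^*(t))+u_g(x,t)$ and is log-Lipschitz (for $u_s$, by Lemma~\ref{equ7} as referenced in the introduction; for $u_g$, by the standard estimate for velocities generated by $L^1 \cap L^\infty$ vorticity), so the Osgood-type uniqueness for integral curves applies. Since the constant curve $t \mapsto 0$ also solves this ODE by the vanishing we just established, uniqueness yields $\phi^*(t) \equiv 0$, and substituting back into $w(x,t) = w_s(x-\phi^*(t))+g(x,t)$ gives the stated form. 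The only mild subtlety is checking that the symmetry $\tld u(x,t) = -u(-x,t)$ passes cleanly through the Biot--Savart convolution given that $w_s \notin L^1$; but $w_s$ is radial so the identity is trivially preserved on that part, and for $g \in L^\infty$ with compact support the change of variables is unproblematic.
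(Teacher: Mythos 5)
Your argument is correct and is exactly the reasoning the paper compresses into its one-line proof (``Clear as both $w_s$ and $g_0$ are symmetric with respect to the origin''): evenness of the initial data, the point-symmetry of 2D Euler combined with uniqueness to get $u(-x,t)=-u(x,t)$, and Osgood uniqueness for the trajectory ODE to pin $\phi^*\equiv 0$. One trivial slip: $w_s$ \emph{is} in $L^1$ (it is compactly supported with only a $\log\log$ singularity), so the ``subtlety'' you flag at the end is vacuous, though your conclusion there is unaffected.
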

\begin{proof}
	Clear as both $w_s(x)$ and $g_0$ are symmetric with respect to the origin.
\end{proof}

Now, we derive an estimate for velocity field $u_s, u_g$ and $u$ defined as in \eqref{def of velocities}.

\begin{lemma} \label{equ7}
	There exists $R>0$ such that for all $0<\abs{x}<R$,
	\begin{equation} \label{us}
		\abs{u_s(x)} \leq \abs{x} \log \log \frac{1}{\abs{x}},
	\end{equation}
	\begin{equation} \label{del us}
		\abs{\nabla u_s(x)} \leq 3 \log \log \frac{1}{\abs{x}}.
	\end{equation}
	Also,
	\begin{equation*}
		\nrm{u_s}_{\infty} + \sup_{\abs{x-y}<e^{-3}}\frac{\abs{u_s(x)-u_s(y)}}{\abs{x-y}\log \log \frac{1}{\abs{x-y}}} < \infty.
	\end{equation*}
\end{lemma}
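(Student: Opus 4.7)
\textbf{Proof proposal for Lemma \ref{equ7}.}

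The plan is to exploit the radial symmetry of $w_s$ to reduce $u_s$ to a one-dimensional object and then obtain all estimates from elementary calculus. Since $w_s$ is radial, the Biot--Savart law gives the explicit tangential formula
\begin{equation*}
u_s(x)=V(|x|)\,x^{\perp},\qquad V(r)=\frac{1}{r^{2}}\int_{0}^{r}s\,w_s(s)\,ds,
\end{equation*}
valid for all $x\neq 0$. My first step is to estimate $V(r)$ for small $r$ by integration by parts with $dv=s\,ds$ and $u=\log\log\frac{1}{s}$, using $\frac{d}{ds}\log\log\frac{1}{s}=-\frac{1}{s\log\frac{1}{s}}$. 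This gives
\begin{equation*}
\int_{0}^{r}s\log\log\tfrac{1}{s}\,ds=\tfrac{r^{2}}{2}\log\log\tfrac{1}{r}+\tfrac{1}{2}\int_{0}^{r}\tfrac{s}{\log\frac{1}{s}}\,ds,
\end{equation*}
and the remainder is $O\!\bigl(r^{2}/\log\frac{1}{r}\bigr)$, which is negligible. So $|V(r)|\le\log\log\frac{1}{r}$ for all sufficiently small $r$, and \eqref{us} follows at once since $|u_s(x)|=|V(|x|)|\,|x|$.

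For the gradient bound \eqref{del us}, I would write $\nabla u_s$ componentwise in terms of $V(|x|)$ and $V'(|x|)$, giving the schematic pointwise bound $|\nabla u_s(x)|\le C\bigl(|V(|x|)|+|x|\,|V'(|x|)|\bigr)$. Differentiating the definition of $V$ yields the clean identity $rV'(r)=w_s(r)-2V(r)$. Combining this with the bound on $V$ from the first step and $w_s(r)=\log\log\frac{1}{r}$ for small $r$ produces $|\nabla u_s(x)|\le 3\log\log\frac{1}{|x|}$ (for $|x|$ small enough that the lower-order contributions are absorbed into the factor $3$).

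For the final log--log-Lipschitz seminorm, the only subtle region is near the origin, since $w_s$ is smooth and compactly supported away from $0$ and hence $u_s$ is Lipschitz there and bounded on all of $\mathbb{R}^2$ (Biot--Savart gives $|u_s(x)|\lesssim 1/|x|$ for large $|x|$). For $|x-y|<e^{-3}$ with, say, $|x|\le|y|\le e^{-3}$, I would split into two cases. If $|x-y|\le|x|/2$, the straight segment from $x$ to $y$ stays in $\{|z|\ge|x|/2\}$, so integrating \eqref{del us} along it gives
\begin{equation*}
|u_s(x)-u_s(y)|\le 3|x-y|\log\log\tfrac{2}{|x|}\le C|x-y|\log\log\tfrac{1}{|x-y|}.
\end{equation*}
If instead $|x-y|>|x|/2$, then both $|x|$ and $|y|$ are comparable to $|x-y|$, and I use the triangle inequality together with \eqref{us} on each term. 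The main obstacle I anticipate is only bookkeeping: keeping track of constants and the threshold $R$ in the case split, and verifying that the $\log\log$ factors line up correctly in the small-versus-large $|x-y|/|x|$ dichotomy. No analytic difficulty beyond this should arise.
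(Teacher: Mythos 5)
Your proposal is correct and follows essentially the same route as the paper: the paper also writes $u_s(x)=G(|x|)\mathbf{e}_\theta$ with $G(\rho)=\frac{1}{\rho}\int_0^\rho r\,w_s(r)\,dr$ (identical to your $V(|x|)x^\perp$) and the corresponding polar formula for $\nabla u_s$, and then declares the estimates obvious. Your integration by parts, the identity $rV'(r)=w_s(r)-2V(r)$, and the two-case split for the log--log-Lipschitz seminorm are exactly the details the paper leaves implicit, and they check out.
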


\begin{proof}
	Let $\mathbf{e}_r= \begin{pmatrix}
		\frac{x_1}{\abs{x}} \\
		\frac{x_2}{\abs{x}}
	\end{pmatrix}, \mathbf{e}_\theta=\begin{pmatrix}
		-\frac{x_2}{\abs{x}} \\
		\frac{x_1}{\abs{x}}
	\end{pmatrix}$. Remembering the definition of $w_s$ \eqref{def of ws}, direct calculation (e.g., see Example 2.1 of \cite{MB}) gives us
	\begin{align*}
		u_s(x)&= \left( \frac{1}{\abs{x}} \int_0^{\abs{x}} r w_s(r) \, dr \right) \mathbf{e}_r \eqqcolon G(\abs{x}) \mathbf{e}_\theta , \\
		\nabla u_s (x) &= \frac{G(\abs{x})}{\abs{x}} \begin{pmatrix}
			0 & -1 \\
			1 & 0
		\end{pmatrix} + \left( G'(\abs{x}) - \frac{G(\abs{x})}{\abs{x}}\right) \mathbf{e}_\theta \mathbf{e}_r^t,
	\end{align*}
	from which the result is obvious\footnote{Since $w_s$ is radial, $w_s(r)$ is well-defined without confusion.}.
\end{proof}

Now, we give more refined rate of access of two points than Lemma \ref{access rate} using upgraded regularity of $g$ obtained from Theorem \ref{main thm 1}.

\begin{lemma} \label{access rate alpha lemma}
	Let $\alpha \in (0,1)$ be given and $\Phi: \bbR^2 \times [0,1] \rightarrow \bbR^2$ be a flow map of $u$. Then, there exist $M>0$\footnote{Originally, the constant depends on $\cpsi$- norm of the solution $u$. However, we declared in Section \ref{subsec: Notations} that as we fix $u$ as the specific solution of \eqref{equ1}, dependence on $u$ would not be mentioned.}, $ 0<R=R(\alpha)<e^{-3}$ such that for all $0<\abs{x-y}<R$ and $0 \leq t \leq 1$,
	\begin{equation} \label{access rate alpha}
		\abs{x-y} e^{-Mt \logonealpha{x-y}} \leq \abs{\Phi(x,t)-\Phi(y,t)} \leq \abs{x-y} e^{Mt \logonealpha{x-y}}.
	\end{equation}
\end{lemma}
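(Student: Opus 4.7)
The plan is to extract a refined modulus of continuity for $u(\cdot,t)$ via Theorem~\ref{main thm 1} and Lemma~\ref{vor to vel lemma}, then compare the distance $d(t) := |\Phi(x,t)-\Phi(y,t)|$ against an autonomous ODE that is explicitly solvable.

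First, by Lemma~\ref{symmetry lemma} we have $\phi^*(t)\equiv 0$, so $u(x,t) = u_s(x) + u_g(x,t)$. Since $g_0 \in C^\infty_c \subset C^{\varphi_\alpha}$ for every $\alpha\in(0,1)$, Theorem~\ref{main thm 1} gives $g \in L^\infty_{\textnormal{loc}}([0,\infty); C^{\varphi_\alpha})$, and then Lemma~\ref{vor to vel lemma} gives $u_g \in L^\infty_{\textnormal{loc}}([0,\infty); C^{\psi_\alpha})$. Lemma~\ref{equ7} furnishes a strictly better modulus for $u_s$ (namely $r\log\log(1/r)$, which is dominated by $\psi_\alpha$ near $0$). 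Combining, there exists $C=C(\alpha)>0$ such that for all $t\in[0,1]$ and all $X,Y$ with $|X-Y|$ below a fixed threshold,
\begin{equation*}
	|u(X,t)-u(Y,t)| \;\leq\; C\,|X-Y|\Big(\log \tfrac{1}{|X-Y|}\Big)^{1-\alpha}.
\end{equation*}

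Next, set $d(t) = |\Phi(x,t)-\Phi(y,t)|$ and $f(t) = \log(1/d(t))$. Because $\Phi(\cdot,t)$ is injective, $d(t)>0$, and $d$ is absolutely continuous. Applying the bound above along the trajectories gives, almost everywhere,
\begin{equation*}
	|\dot d(t)| \leq C\,d(t)\,f(t)^{1-\alpha}, \qquad\text{hence}\qquad |\dot f(t)| \leq C\,f(t)^{1-\alpha}.
\end{equation*}
Multiplying by $\alpha f(t)^{\alpha-1}$ shows $\big|\tfrac{d}{dt}(f(t)^\alpha)\big|\leq C\alpha$, so integration yields
\begin{equation*}
	f(0)^\alpha - C\alpha t \;\leq\; f(t)^\alpha \;\leq\; f(0)^\alpha + C\alpha t.
\end{equation*}
If $R=R(\alpha)$ is chosen so small that $(\log(1/R))^\alpha \geq 2C\alpha$, both sides remain positive for $|x-y|<R$ and $t\in[0,1]$. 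Taking the $1/\alpha$-th power and using the elementary expansion $(1\pm\varepsilon)^{1/\alpha} = 1 \pm O_\alpha(\varepsilon)$ valid for $|\varepsilon|\leq 1/2$, I obtain
\begin{equation*}
	\Big|\log \tfrac{1}{d(t)} - \log \tfrac{1}{|x-y|}\Big| \;\leq\; M\,t\,\Big(\log \tfrac{1}{|x-y|}\Big)^{1-\alpha}
\end{equation*}
for some $M=M(\alpha)>0$, which exponentiates to \eqref{access rate alpha}.

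The chief technical obstacle is ensuring that $d(t)$ itself stays inside the small regime where the $C^{\psi_\alpha}$ estimate applies throughout $[0,1]$; without this, the differential inequality on $d$ would not be self-consistent. This is handled by shrinking $R$ further so that the upper envelope $R\,e^{M(\log(1/R))^{1-\alpha}}$ is still below that threshold, which is possible since this quantity vanishes as $R\to 0$. A standard continuity/continuation argument (start from $d(0)=|x-y|<R$ and propagate forward as long as $d(t)$ respects the upper bound) then closes the loop. A minor care-point is that the constant from Lemma~\ref{vor to vel lemma} depends on $\alpha$ and on $\|g\|_{L^\infty([0,1];C^{\varphi_\alpha})}$, but both are finite once $\alpha$ is fixed, which is consistent with the dependence structure asserted in the statement.
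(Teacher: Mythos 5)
Your proposal is correct and follows essentially the same route as the paper: both upgrade $u$ to $L^\infty([0,1];C^{\psi_\alpha})$ a posteriori via Theorem \ref{main thm 1} and Lemma \ref{vor to vel lemma}, then compare $d(t)=\abs{\Phi(x,t)-\Phi(y,t)}$ with the ODE $x'=\pm Nx\left(\log\frac{1}{x}\right)^{1-\alpha}$ (your integration of $\frac{d}{dt}\big(f(t)^\alpha\big)$ is exactly the explicit solution the paper writes down), and finally linearize the $1/\alpha$-th power for $\abs{x-y}$ small. The only cosmetic difference is that the paper uses the elementary binomial inequalities $(a\pm b)^k$ where you use $(1\pm\varepsilon)^{1/\alpha}=1\pm O_\alpha(\varepsilon)$; your explicit handling of the continuation/self-consistency of the small-scale regime is a point the paper leaves implicit.
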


\begin{proof}
	Since $g_0$ is compactly supported smooth function, $g_0$ also belongs to $\cphi$ regardless of $\alpha$. Then, $g\in L^\infty([0,1]; \cphi)$ by Theorem \ref{main thm 1} and $u_g \in L^\infty([0,1];  \cpsi)$ by Lemma \ref{vor to vel lemma}. Thus $u \in L^\infty([0,1]; \cpsi)$ and let
	\begin{equation*}
		\sup_{0 \leq t \leq 1} \nrm{u(\cdot,t)}_{\cpsi} \eqqcolon N.
	\end{equation*}
	Then, comparison with ODE
	\begin{equation*}
		\left\{ \begin{aligned}
			& x'(t)= \pm N x(t) \left( \log \frac{1}{x(t)} \right)^{1-\alpha}, \\
			& x(0)=x_0 \quad (0<x_0<e^{-3(N+1)}),
		\end{aligned} \right.
	\end{equation*}
	whose solution is
	\begin{equation*}
		x(t)=e^{-\left(\mp \alpha Nt+ \left( \log \frac{1}{x_0}\right)^\alpha\right)^{1/\alpha}},
	\end{equation*}
	gives us
	\begin{equation} \label{eq12}
		e^{-\left( \alpha Nt+ \left( \log \frac{1}{\abs{x-y}}\right)^\alpha\right)^{1/\alpha}} \leq \abs{\Phi(x,t)-\Phi(y,t)} \leq e^{-\left(- \alpha Nt+ \left( \log \frac{1}{\abs{x-y}}\right)^\alpha\right)^{1/\alpha}}
	\end{equation}
	for $0<\abs{x-y}<e^{-3(N+1)}$. Applying elementary inequalities
	\begin{equation*}
		(a+b)^k \leq a^k +bk(a+b)^{k-1}, \quad (a-b)^k  \geq a^k -bka^{k-1} \quad (a,b,k>0)
	\end{equation*}
	to \eqref{eq12}, we have
	\begin{equation} \label{eq13}
		e^{-\log \frac{1}{\abs{x-y}} -Nt \left( \alpha Nt+ \left( \log \frac{1}{\abs{x-y}} \right)^\alpha \right)^{1/\alpha -1}} \leq \abs{\Phi(x,t)-\Phi(y,t)} \leq e^{-\log \frac{1}{\abs{x-y}}+Nt \logonealpha{x-y}}.
	\end{equation}
	If $\abs{x-y}<e^{-\left( \frac{\alpha N}{2^\alpha -1} \right)^{1/\alpha}}$, then $\alpha Nt \leq (2^\alpha-1) \left( \log \frac{1}{\abs{x-y}}\right)^\alpha$ so that \eqref{eq13} could be simplified to
	\begin{equation*}
		\abs{x-y} e^{-2Nt \logonealpha{x-y}} \leq \abs{\Phi(x,t)-\Phi(y,t)} \leq \abs{x-y} e^{Nt \logonealpha{x-y}}.
	\end{equation*}
\end{proof}

Now, we estimate the value of $g$. The inequality \eqref{eq9} implies that there exists a constant $C=C(\alpha)$ such that for $0 \leq t \leq 1$,
\begin{equation} \label{eq14}
	\nrm{g(\cdot,t)}_{\cphi} \leq \nrm{g_0}_{\cphi} + Ct.
\end{equation}
However, we need to get sharper time-dependent local estimates near the origin.

\begin{lemma} \label{local g estimate}
	Let $\alpha \in (0,1)$ be given. Then, there exist $R>0, M=M(\alpha)>0$ and $0<T\leq 1$ such that for all $0<\abs{x}<R$ and $0 \leq t \leq T$,
	\begin{equation} \label{g}
		\abs{g(x,t)} \leq M \logalpha{x} t,
	\end{equation}
	\begin{equation} \label{ug}
		\abs{u_g(x,t)} \leq M\abs{x} \left( 1+ \logonealpha{x}t \right).
	\end{equation}
\end{lemma}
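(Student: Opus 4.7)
The plan is to prove the two estimates separately, leveraging as inputs the regularity $g \in L^\infty([0,1]; \cphi)$ and $u_g \in L^\infty([0,1]; \cpsi)$ (from Theorem \ref{main thm 1} and Lemma \ref{vor to vel lemma}), the vanishing of $u_g$ at the origin (from the symmetry of Lemma \ref{symmetry lemma}), and the sharpened separation estimate of Lemma \ref{access rate alpha lemma}.

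For the pointwise bound \eqref{g} on $g$, the structural fact I exploit is that the explicit construction of $g_0$ in \eqref{initial data} makes $g_0$ vanish identically in a neighborhood of the origin. Given $x$ with $\abs{x}$ small, I write $x = \Phi(z, t)$; applying \eqref{access rate alpha} to the pair $(z, 0)$ (using $\Phi(0, s) \equiv 0$ from Lemma \ref{symmetry lemma}) keeps $z$ in a small neighborhood of the origin where $g_0 \equiv 0$, so the Lagrangian representation \eqref{eq5} collapses to
\begin{equation*}
    g(x, t) = -\int_0^t u_g(\Phi(z, s), s) \cdot \nabla w_s(\Phi(z, s)) \, ds.
\end{equation*}
Along this trajectory, symmetry yields $\abs{u_g(\Phi(z, s), s)} \aleq \abs{\Phi(z, s)} \logonealpha{\Phi(z, s)}$, which pairs with $\abs{\nabla w_s(\Phi(z, s))} \aleq 1/(\abs{\Phi(z, s)} \log (1/\abs{\Phi(z, s)}))$ to give an integrand of size $\logalpha{\Phi(z, s)}$. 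A further application of Lemma \ref{access rate alpha lemma}, provided $T$ and $R$ are chosen so that $MT (\log (1/\abs{z}))^{-\alpha} \leq 1/2$ for all $z$ arising from $\abs{x} < R$, makes the three logarithmic weights $\log(1/\abs{x})$, $\log(1/\abs{z})$, $\log(1/\abs{\Phi(z, s)})$ mutually comparable, and integrating yields \eqref{g}.

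For the velocity bound \eqref{ug}, the naive $\cpsi$-to-$L^\infty$ estimate alone gives only $\abs{u_g(x, t)} \aleq \abs{x} \logonealpha{x}$, which is far too crude at $t = 0$ to match the pure linear term $M\abs{x}$. I therefore decompose
\begin{equation*}
    u_g(\cdot, t) = u_{g_0} + u_{g(\cdot, t) - g_0}.
\end{equation*}
The first piece satisfies $\abs{u_{g_0}(x)} \leq C\abs{x}$ directly, since $g_0 \in C^\infty_c$ makes $u_{g_0}$ Lipschitz with $u_{g_0}(0) = 0$ by symmetry. For the second, Lemma \ref{vor to vel lemma} together with the symmetry identity $u_{g - g_0}(0, t) = 0$ converts any bound on $\nrm{g(\cdot, t) - g_0}_{\cphi}$ into a pointwise bound of the form $\abs{x} \logonealpha{x}$ times that norm, so it suffices to prove $\nrm{g(\cdot, t) - g_0}_{\cphi} \leq Ct$. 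This follows by observing that $h \coloneqq g - g_0$ solves
\begin{equation*}
    h_t + u \cdot \nabla h = -u \cdot \nabla g_0 - u_g \cdot \nabla w_s, \qquad h(\cdot, 0) = 0,
\end{equation*}
whose right-hand side lies in $L^\infty_t \cphi$ uniformly: the second forcing by Lemma \ref{forcing term}, and the first by smoothness and compact support of $g_0$ together with $u \in L^\infty_t \cpsi$. A direct adaptation of the proof of Proposition \ref{consistency lemma} with a general $\cphi$-forcing in place of $-v \cdot \nabla w_s$ then yields $\nrm{h(\cdot, t)}_{\cphi} \leq Ct$.

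The main difficulty is quantitative rather than conceptual: for \eqref{g}, the logarithmic weights along the trajectory must stay comparable to $\log(1/\abs{x})$ throughout $[0, t]$, which forces $R$ to be small enough that the backward-in-time comparison gives $\log(1/\abs{z}) \approx \log(1/\abs{x})$ and $T$ small enough that the forward comparison gives $\log(1/\abs{\Phi(z, s)}) \approx \log(1/\abs{z})$. These two constraints interact through $\abs{z}$, but the asymptotic $t \ll (\log (1/\abs{z}))^{\alpha}$ already built into Lemma \ref{access rate alpha lemma} makes the bookkeeping routine.
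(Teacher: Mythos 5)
Your proof is correct, and it shares the paper's overall strategy---split $g$ into an initial-data part whose induced velocity is Lipschitz near the origin plus a remainder that is $O(t)$ in $\cphi$, then convert to velocity bounds via Lemma \ref{vor to vel lemma} and the vanishing at the origin forced by symmetry---but the execution differs in two places. For \eqref{ug} the paper decomposes $g=F-G$ with $F=g_0\circ\Phi_t^{-1}$ (supported away from the origin, so $\nabla^\perp\Delta^{-1}F$ is harmonic, hence Lipschitz, near $0$) and $G$ the explicit Duhamel integral, whose $\cphi$-norm is shown to be at most $Mt$ by rerunning the proof of Proposition \ref{consistency lemma}; you instead take $g=g_0+h$ and bound $\nrm{h(\cdot,t)}_{\cphi}\leq Ct$ from the transport equation for $h$, which costs you the extra forcing $-u\cdot\nabla g_0$, but this is harmless since $\nabla g_0$ is Lipschitz and compactly supported while $u\in L^\infty_t\cpsi$, so that forcing lies in $\cphi$ uniformly. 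The two decompositions coincide near the origin (where both $g_0$ and $g_0\circ\Phi_t^{-1}$ vanish) and are equally easy to control, and your $u_{g_0}$ piece is Lipschitz for the even simpler reason that $g_0\in C^\infty_c$. For \eqref{g} you bound the Duhamel integrand pointwise along the trajectory, pairing $\abs{u_g}\aleq\abs{\Phi(z,s)}\logonealpha{\Phi(z,s)}$ (from the $\cpsi$ bound plus $u_g(0,s)=0$) against $\abs{\nabla w_s}$ and then using Lemma \ref{access rate alpha lemma} to make $\log\frac{1}{\abs{\Phi(z,s)}}$, $\log\frac{1}{\abs{z}}$ and $\log\frac{1}{\abs{x}}$ comparable; the paper instead bounds $\nrm{G(\cdot,t)}_{\cphi}\leq Mt$ and evaluates at $x$ using $G(0,t)=0$. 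Your route is marginally more elementary for this step since it avoids invoking the $\cphi$-seminorm machinery of Lemma \ref{forcing term}, at the price of the log-comparability bookkeeping, which is indeed the routine consequence of \eqref{access rate alpha} that you describe. I see no gaps.
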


\begin{proof}
	Since $g\in \cphi$ and $u_g(0,t)=0$ due to symmetry, Proposition \ref{consistency lemma} could be applied. As $g$ satisfies \eqref{equation for g}, we have
	\begin{equation} \label{eq15}
		g(x,t) = \underbrace{g_0(\Phi_t^{-1}(x))}_{\eqqcolon F(x,t)} - \underbrace{\int_0^t u_g(\Phi(\Phi_t^{-1}(x),s)) \cdot \nabla w_s(\Phi(\Phi_t^{-1}(x),s))\, ds}_{\eqqcolon G(x,t)}, 
	\end{equation}
	where $\Phi: \bbR^2 \times [0,1] \rightarrow \bbR^2$ is the flow map generated by $u$ and $\Phi_t^{-1}$ is the inverse flow map. Since $\sup_{0 \leq t \leq 1} \nrm{u}_{LL}<\infty$, there exits $0<T<1$ such that if $0\leq t \leq T$, then $\abs{\Phi(x,t)-x}\leq 1, \abs{\Phi_t^{-1}(x)-x}\leq 1$ for all $x \in \bbR^2$. Then, considering the fact that $g_0$ is supported in $\bbR^2 \backslash B(0,21)$ and $w_s$ is supported in $B(0,1)$ (see \eqref{initial data} and \eqref{def of ws}),
	\begin{equation*}
		\left\{ \begin{aligned}
			& \supp(F(\cdot,t) ) \subset \bbR^2 \backslash B(0,20), \\
			& \supp (G(\cdot,t) ) \subset B(0,2),
		\end{aligned} \right.
	\end{equation*}
	for $0 \leq t \leq T$. Using same method in the proof of Proposition \ref{consistency lemma}, there exists $M=M(\alpha)>0$ such that
	\begin{equation*}
		\nrm{ u_g(\Phi(\Phi_t^{-1}(x),s)) \cdot \nabla w_s(\Phi(\Phi_t^{-1}(x),s)) }_{\cphi} \leq M \nrm{u_g(\cdot,t)}_{\cpsi} 
		\underset{\eqref{vor to vel} }{\leq} M \nrm{g(\cdot,t)}_{\cphi}
		\underset{\eqref{eq14} }{\leq} \nrm{g_0}_{\cphi}+Ct.
	\end{equation*}
	Therefore, there exists a constant $M=M(\alpha)>0$ such that
	\begin{equation} \label{equ12}
		\nrm{G(\cdot,t)}_{\cphi} \leq Mt,\
	\end{equation}
	for $0 \leq t \leq T$, which implies inequality \eqref{g}. Then, we could get \eqref{ug} through the decomposition \eqref{eq15} by applying \eqref{equ12} with Lemma \ref{vor to vel} and the fact that $F(\cdot,t)$ generates Lipschitz velocity field near the origin.
\end{proof}

Using Lemma \ref{local g estimate}, we give an estimate for $\nabla u$.

\begin{proposition} \label{del u proposition}
	Let $\alpha \in (0,1)$ be given. Then, there exist $0<R=R(\alpha)<e^{-3} , M>0$, and $0 \leq T \leq 1$ such that for all $0< \abs{x} <R$ and $0 \leq t \leq T$,
	\begin{equation} \label{del u}
		\abs{\nabla u(x,t)} \leq M \left( \log \log \frac{1}{\abs{x}}+ \logonealpha{x}t + e^{Mt \logonealpha{x}}   \right).
	\end{equation}
\end{proposition}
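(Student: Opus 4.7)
The strategy is to decompose $u = u_s + u_g$ and bound the two pieces separately. The first term $\log\log(1/\abs{x})$ in \eqref{del u} is immediate from Lemma \ref{equ7}, which gives $\abs{\nabla u_s(x)} \leq 3 \log\log(1/\abs{x})$ for $\abs{x}$ small. What remains is the pointwise estimate on $\abs{\nabla u_g(x, t)}$.

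For this I further split $g = F + G$ following \eqref{eq15}, so that $u_g = u_F + u_G$. The piece $F(\cdot, t) = g_0 \circ \Phi_t^{-1}$ vanishes on a fixed neighborhood of the origin (say $B(0, 10)$) for $t \in [0, T]$ with $T$ small, because $g_0 \in C^\infty_c$ is supported in $\bbR^2 \setminus B(0, 20)$ (by \eqref{initial data}) and the log-Lipschitz flow $\Phi_t$ displaces points by at most a uniformly bounded amount on short time intervals. Consequently $u_F = \nabla^\perp \Delta^{-1} F$ is smooth near the origin, and $\abs{\nabla u_F(x,t)} \leq C$ uniformly in $\abs{x}$ small and $t \in [0, T]$; this bounded contribution is dominated by the third term $e^{Mt(\log(1/\abs{x}))^{1-\alpha}}$ in \eqref{del u}, which is always at least one.

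The heart of the argument is the bound $\abs{\nabla u_G(x, t)} \lesssim t (\log(1/\abs{x}))^{1-\alpha}$, for which I apply an interior Schauder estimate to the stream function $\psi_G \coloneqq \Delta^{-1} G$ on the ball $B(x, r)$ with $r = \abs{x}/4$. A direct Schauder bound would contain the lower-order factor $r^{-2}\nrm{\psi_G}_{L^\infty(B(x, r))}$, which is of order $\abs{x}^{-2}$ and far too large; the remedy is the affine subtraction
\begin{equation*}
	\tilde \psi_G(y) \coloneqq \psi_G(y) - \psi_G(x) - \nabla \psi_G(x) \cdot (y - x),
\end{equation*}
which preserves $D^2 \tilde\psi_G = D^2 \psi_G$ but has a much smaller $L^\infty$-norm on $B(x, r)$. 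Since $\nabla \psi_G = -u_G^\perp$, the mean value theorem together with Lemma \ref{vor to vel lemma} applied to $\nrm{G(\cdot, t)}_{\cphi} \lesssim t$ (from \eqref{equ12}) gives
\begin{equation*}
	\nrm{\tilde \psi_G}_{L^\infty(B(x, r))} \lesssim r \sup_{z \in B(x, r)} \abs{u_G(z) - u_G(x)} \lesssim t\, r^2 \left( \log(1/r)\right)^{1-\alpha}.
\end{equation*}
Combining this with the pointwise bound $\nrm{G}_{L^\infty(B(x, r))} \lesssim t (\log(1/\abs{x}))^{-\alpha}$ from Lemma \ref{local g estimate} and an interior Schauder estimate adapted to the modulus $\varphi_\alpha$ then yields the desired $\abs{\nabla u_G(x, t)} \lesssim t (\log(1/\abs{x}))^{1-\alpha}$.

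The principal obstacle is the interior Schauder estimate at the regularity $\cphi$: since $\varphi_\alpha$ fails the Dini condition when $\alpha < 1$, the classical H\"older--Schauder argument is not immediately applicable, and a variant tailored to this modulus (or, alternatively, a direct pointwise bound) is required. One concrete alternative that bypasses Schauder altogether is a Calder\'on--Zygmund-type splitting of the Biot--Savart integral representing $\nabla u_G(x)$: the near part $\{y : \abs{y - x} < r\}$ is controlled by the local $\cphi$-modulus of $G$, the intermediate part $\{y : \abs{y} < 2\abs{x},\ \abs{y - x} \geq r\}$ uses the pointwise bound of Lemma \ref{local g estimate}, and the far part uses $\nrm{G}_{L^\infty}$, with each piece contributing at most $O(t(\log(1/\abs{x}))^{1-\alpha} + 1)$. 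Finally, adding $\abs{\nabla u_s}$, $\abs{\nabla u_F}$, and $\abs{\nabla u_G}$, and absorbing the constant from $u_F$ into $e^{Mt(\log(1/\abs{x}))^{1-\alpha}}$, yields \eqref{del u}.
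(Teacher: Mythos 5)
Your overall architecture (Schauder on a ball of radius comparable to $\abs{x}$, plus an affine/constant subtraction to kill the lower-order $\abs{x}^{-2}\nrm{\psi}_{L^\infty}$ term) matches the paper's, but the step that actually carries the proof is missing. You correctly observe that $\varphi_\alpha$ fails the Dini condition for $\alpha<1$, and this is not a removable technicality: there is no Schauder estimate ``adapted to the modulus $\varphi_\alpha$'' that outputs a bounded gradient, and your fallback Calder\'on--Zygmund splitting fails at exactly the same point, because the near-field contribution is $\int_0^{\rho}\varphi_\alpha(s)\,\frac{ds}{s}=\int_0^\rho \frac{ds}{s\left(\log\frac{1}{s}\right)^{\alpha}}=\infty$. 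So neither of your two routes controls the near part of $\nabla u_G$ using only the fact that $\nrm{G(\cdot,t)}_{\cphi}\lesssim t$. The Eulerian regularity $C^{\varphi_\alpha}$ of the perturbation is simply too weak to conclude $\nabla u_G\in L^\infty_{\mathrm{loc}}(\bbR^2\setminus\{0\})$.

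What the paper uses instead, and what your sketch never invokes, is the Lagrangian structure of the \emph{full} vorticity on the small ball: since $w(y,t)=w_0(\Phi_t^{-1}(y))$ and $g_0\circ\Phi_t^{-1}$ vanishes on $B\left(x,\tfrac{\abs{x}}{2}\right)$ for small $t$, the vorticity there equals $w_s\circ\Phi_t^{-1}$, which is genuinely \emph{classically} $C^{0,\alpha}$ on that ball. Its seminorm is computed by the mean value theorem on $w_s$ (using $\abs{\nabla w_s(p)}\approx \frac{1}{\abs{p}\log\frac{1}{\abs{p}}}$) combined with the bi-H\"older flow estimates of Lemma \ref{access rate alpha lemma}, yielding $[w(\cdot,t)]_{C^{0,\alpha}(B(x,\abs{x}/2))}\lesssim \frac{e^{3Mt\left(\log\frac{1}{\abs{x}}\right)^{1-\alpha}}}{\abs{x}^{\alpha}\log\frac{1}{\abs{x}}}$; multiplying by $\abs{x}^\alpha$ in the standard H\"older--Schauder estimate is precisely what produces the third term $e^{Mt\left(\log\frac{1}{\abs{x}}\right)^{1-\alpha}}$ in \eqref{del u}. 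In your write-up that exponential appears only as a convenient place to ``absorb'' the bounded contribution of $u_F$, which misses its actual origin: it is the unavoidable price of the flow being only $\cpsi$-regular rather than Lipschitz. To repair your argument, replace the $\cphi$-based control of the near field by this Lagrangian computation (equivalently, note that $G(\cdot,t)=w_s-w_s\circ\Phi_t^{-1}$ is locally Lipschitz away from the origin with an explicit, $\abs{x}$-dependent constant), after which the classical interior Schauder estimate at exponent $\alpha$ applies with no modification of the theory.
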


\begin{proof}
	Application of interior Schauder estimate (see, e.g., Theorem 2.14 of \cite{Schauder-Xavier}) to relation
	\begin{equation*}
		\Delta \psi =w \quad \text{in} \quad B\left(x,\frac{\abs{x}}{2}\right),
	\end{equation*}
	where $\psi:\bbR^2 \rightarrow \bbR$ is a stream function such that $\nabla^\perp \psi =u$, and $\psi(0)=0$, yields that there exists $C>0$ such that
	\begin{equation} \label{Shauder}
		\nrm{\nabla u(\cdot,t)}_{L^\infty \left(B \left(x, \frac{\abs{x}}{4}\right)\right)} \leq C  \left(  \frac{1}{\abs{x}^2} \nrm{\psi(\cdot,t)}_{L^\infty \left(B \left(x, \frac{\abs{x}}{2}\right)\right)} + \nrm{w(\cdot,t)}_{L^\infty \left(B \left(x, \frac{\abs{x}}{2}\right)\right)} + \abs{x}^\alpha [w(\cdot,t)]_{C^{0,\alpha} \left(B \left(x, \frac{\abs{x}}{2}\right)\right)}  \right).
	\end{equation}

We first estimate $\nrm{\psi(\cdot,t)}_{L^\infty \left(B \left(x, \frac{\abs{x}}{2}\right)\right)}$. By \eqref{us} and \eqref{ug}, there exists $0 < R < e^{-4}, M=M(\alpha)>0$, and $T>0$ such that for all $0<\abs{x}<R$ and $0 \leq t \leq T$,
\begin{equation} \label{u}
	\abs{u(x,t)} \leq \abs{u_s(x)}+ \abs{u_g(x,t)} \underset{\eqref{us}, \eqref{ug} }{\leq} \abs{x} \log \log \frac{1}{\abs{x}} + M \abs{x} \logonealpha{x}t.
\end{equation}
As $u=\nabla^\perp \psi$ and $\psi(0)=0$, the Fundamental Theorem of Calculus implies that
\begin{equation} \label{eq16}
	\begin{aligned}
		\nrm{\psi(\cdot,t)}_{L^\infty \left(B \left(x, \frac{\abs{x}}{2}\right)\right)} & \leq \frac{3}{2} \abs{x} \sup_{0 \leq \abs{z} \leq \frac{3}{2}\abs{x}} \abs{u(z,t)} \\
		\underset{\eqref{us}, \eqref{ug} }&{\leq} M \abs{x} \left( \abs{x} \log \log \frac{1}{\abs{x}} + M \abs{x} \logonealpha{x}t \right).
	\end{aligned}
\end{equation}

Also, by \eqref{def of ws} and \eqref{g}, there exist $0<R=R(\alpha) < e^{-4},M=M(\alpha)>0$ and $T>0$ such that for all $0<\abs{x}<R$ and $0 \leq t \leq T \leq 1$ (we can choose $R=R(\alpha)$ so that $M\left( \log \frac{1}{R}\right)^{-\alpha} \leq \log \log \frac{1}{R}$ holds),
\begin{equation} \label{eq17}
	\abs{w(x,t)} \underset{\text{Lem } \ref{symmetry lemma}}{\leq} \abs{w_s(x)}+\abs{g(x,t)} \underset{\eqref{def of ws}, \eqref{g} }{\leq} \log \log \frac{1}{\abs{x}} + M\logalpha{x}t \leq 2 \log \log \frac{1}{\abs{x}}.
\end{equation}

Lastly, we calculate H\"older seminorm $[w(\cdot,t)]_{C^{0,\alpha} \left(B \left(x, \frac{\abs{x}}{2}\right)\right)}$. Suppose two points $y, z \in B\left( x, \frac{\abs{x}}{2} \right)$ are given. Then, for all $\abs{x}<R$ and $0 \leq t \leq T$, where $R$ is a constant in Lemma \ref{access rate alpha lemma} and $T$ is the constant in Lemma \ref{local g estimate},

\begin{equation*} 
	\begin{aligned}
	\frac{\abs{w(y,t)-w(z,t)}}{\abs{y-z}^\alpha} &= \frac{\abs{w_0(\Phi_t^{-1}(y))-w_0(\Phi_t^{-1}(z))}}{\abs{y-z}^\alpha} \\
	\underset{\text{Lem } \ref{symmetry lemma}}&{\leq} \frac{|w_s(\Phi_t^{-1}(y))-w_s(\Phi_t^{-1}(z))+\overbrace{g_0(\Phi_t^{-1}(y))-g_0(\Phi_t^{-1}(z))}^{=0 ~~(\because \, \supp(g_0) \subset \bbR^2 \backslash B(0,2))}|}{\abs{y-z}^\alpha} \\
	& = \frac{|w_s(\Phi_t^{-1}(y))-w_s(\Phi_t^{-1}(z))|}{\abs{\Phi_t^{-1}(y) - \Phi_t^{-1}(z)}} \cdot \frac{\abs{\Phi_t^{-1}(y) - \Phi_t^{-1}(z)}}{\abs{y-z}^\alpha}.
\end{aligned}
\end{equation*}

By mean value theorem,
\begin{equation} \label{equ2}
	\begin{aligned}
	\sup_{y,z\in B\left( x, \frac{\abs{x}}{2}\right)} \frac{|w_s(\Phi_t^{-1}(y))-w_s(\Phi_t^{-1}(z))|}{\abs{\Phi_t^{-1}(y) - \Phi_t^{-1}(z)}} &\leq \sup_{p \in \Phi_t^{-1}\left( B \left( x, \frac{\abs{x}}{2}\right)\right)} \abs{\nabla w_s(p)} \\
	\underset{\text{Lem } \ref{access rate alpha lemma} }&{\leq} \abs{\nabla w_s \left( \frac{\abs{x}}{2} e^{-Mt \left( \log \frac{1}{\abs{x}/2}\right)^{1-\alpha}}\right)} \\
	& \leq \frac{2e^{2^{1-\alpha}Mt \logonealpha{x}}}{\abs{x} \log \frac{1}{\abs{x}}}.
\end{aligned}
\end{equation}

Also, by Lemma \ref{access rate alpha lemma},
\begin{equation*}
	\frac{\abs{\Phi_t^{-1}(y) - \Phi_t^{-1}(z)}}{\abs{y-z}^\alpha} \leq \abs{y-z}^{1-\alpha} e^{Mt \logonealpha{y-z}}.
\end{equation*}
Note that for one-variable function $f(r)=r^{1-\alpha}e^{Mt\left( \log \frac{1}{r}\right)^{1-\alpha}}$,
\begin{equation*}
	f'(r)=(1-\alpha)r^{-\alpha}e^{Mt \left( \log \frac{1}{r}\right)^{1-\alpha}} \left( 1-Mt \left( \log \frac{1}{r} \right)^{-\alpha} \right).
\end{equation*}
This means that the supremum of $\abs{y-z}^{1-\alpha} e^{Mt \logonealpha{y-z}}$ for $y, z \in B \left( x, \frac{\abs{x}}{2} \right)$ is achieved when $\abs{y-z}=\abs{x}$ for sufficiently small $x$ (precisely, $0<\abs{x}<R$ suffices for $R$ satisfying $M \left( \log \frac{1}{R}\right)^{-\alpha} <\frac{1}{2}$). Then, we have
\begin{align*}
	\sup_{y,z\in B\left( x, \frac{\abs{x}}{2}\right)} \frac{\abs{\Phi_t^{-1}(y) - \Phi_t^{-1}(z)}}{\abs{y-z}^\alpha} & \leq \sup_{y,z\in B\left( x, \frac{\abs{x}}{2}\right)}  \abs{y-z}^{1-\alpha} e^{Mt \logonealpha{y-z}} \\
	& = \abs{x}^{1-\alpha} e^{Mt \logonealpha{x}}.
\end{align*}
Therefore, there exists $R=R(\alpha)>0, T>0$ and $M>0$ such that for $0<\abs{x}<R$ and $0 \leq t \leq T$,
\begin{equation} \label{eq18}
	\begin{aligned}
		\sup_{y,z\in B\left( x, \frac{\abs{x}}{2}\right)} \frac{\abs{w(y,t)-w(z,t)}}{\abs{y-z}^\alpha} & \leq \sup_{y,z\in B\left( x, \frac{\abs{x}}{2}\right)} \frac{|w_s(\Phi_t^{-1}(y))-w_s(\Phi_t^{-1}(z))|}{\abs{\Phi_t^{-1}(y) - \Phi_t^{-1}(z)}} \cdot \sup_{y,z\in B\left( x, \frac{\abs{x}}{2}\right)} \frac{\abs{\Phi_t^{-1}(y) - \Phi_t^{-1}(z)}}{\abs{y-z}^\alpha} \\
		& \leq 2 \frac{e^{2^{1-\alpha}Mt\logonealpha{x}}}{\abs{x} \log \frac{1}{\abs{x}}} \abs{x}^{1-\alpha} e^{Mt\logonealpha{x}} \\
		& \leq \frac{2e^{3Mt\logonealpha{x}}}{\abs{x}^\alpha \log \frac{1}{\abs{x}}}.
	\end{aligned}
\end{equation}
Combining \eqref{eq16}, \eqref{eq17}, \eqref{eq18} with \eqref{Shauder}, we conclude that there exists $R=R(\alpha)>0, T>0$ and $M>0$ such that for $0<\abs{x}<R$ and $0 \leq t \leq T$,
\begin{equation*}
	\abs{\nabla u(x,t)} \leq \nrm{\nabla u (\cdot,t)}_{L^\infty \left( B \left( x, \frac{\abs{x}}{4} \right)\right)} \underset{\substack{\eqref{Shauder}, \eqref{eq16} \\ \eqref{eq17}, \eqref{eq18}  }}{\leq} M  \left( \log \log \frac{1}{\abs{x}} + \logonealpha{x}t + e^{Mt \logonealpha{x}}\right).
\end{equation*}
\end{proof}

\subsection{The Key Lemma}
\label{subsec: The Key Lemma}

Remember that the evolution equation for $g \in L^\infty_{\textnormal{loc}}([0,\infty); L^\infty)$ is given by \eqref{equation for g}
\begin{equation*}
	g_t + u \cdot \nabla g = -u_g \cdot \nabla w_s.
\end{equation*}
Since $w_s(x)=\log \log \frac{1}{\abs{x}}$ for $0<\abs{x} <e^{-2}$, the equation for $g$ near the origin could be written by
\begin{equation} \label{equation for g near the origin}
	g_t+u \cdot \nabla g = \frac{u_g \cdot \frac{x}{\abs{x}}}{\abs{x} \log \frac{1}{\abs{x}}}.
\end{equation}
Note that \textit{when time is zero}, \eqref{Lip initial} implies that there exists $K, \delta>0$ such that for $x=(0,r)$ with $0<r<\delta$,
\begin{equation*}
	\frac{\abs{u_g(x,0) \cdot \frac{x}{\abs{x}}}}{\abs{x} \log \frac{1}{\abs{x}}} = \frac{\abs{u_{g_0}(0,r) \cdot (0,1) }}{r \log \frac{1}{r}} \geq \frac{K}{\boldsymbol{\log \frac{1}{r}}}.
\end{equation*}
This implies that forcing term on the right hand side of \eqref{equation for g near the origin} is so strong enough that one could expect that $g$ might not belong to $L^\infty([0,1]; C^{\phi_\beta})$ for any $\beta>1$ (remember $\phi_\beta(r) = \left( \log \frac{1}{r}\right)^{-\beta}$). To make this argument rigorous, what we have to do is to remove the qualifier `\textit{when time is zero}' so that this large forcing maintains for finite length of time interval. This is what our Key Lemma exactly tells about.

\begin{definition}
	A map $\phi_r:[0,\infty) \rightarrow\bbR^2$ is defined by particle trajectory of $(0,r)$, i.e., $\phi_r$ is a unique solution of ODE
	\begin{equation*}
		\left\{ \begin{aligned}
			& \frac{d}{dt} \phi_r(t)=u(\phi_r(t),t), \\
			&\phi_r(0)=(0,r),
		\end{aligned} \right.
	\end{equation*}
	where $u$ is fixed solution of \eqref{equ1}.
\end{definition}

By the definition of $w_s$ \eqref{def of ws} and Lemma \ref{access rate alpha lemma}, there exist $R=R(\alpha)>0$ and $0 \leq t \leq T$ such that forced transport equation \eqref{equation for g} could be written along particle trajectory $\phi_r$
\begin{equation} \label{equation for g integral}
	g(\phi_r(t),t) = \int_0^t \frac{u_g(\phi_r(s),s) \cdot \frac{\phi_r(s)}{\abs{\phi_r(s)}}}{\abs{\phi_r(s)} \log \frac{1}{\abs{\phi_r(s)}}} \, ds
\end{equation}
for all $0<r<R$ and $0 \leq t \leq T$. We now state the Key Lemma.

\begin{lemma} [\textbf{Key Lemma}] \label{key lemma}
	Let $\alpha \in (0,1)$ be given. Then, for all $\varepsilon, \delta>0$, there exists $R=R(\alpha, \varepsilon)>0$ such that for $0<r<R$ and $0 \leq t \leq \frac{1}{\left( \log \frac{1}{r} \right) ^{1-\alpha +\varepsilon}}$,
	\begin{equation*}
		\abs{u_g(\phi_r(t),t)-u_g(\phi_r(0),0)} \leq \delta r.
	\end{equation*}
\end{lemma}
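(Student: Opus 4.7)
The plan is to split the variation along $\phi_r$ into a spatial piece at the final time $t$ and a temporal piece at the initial position $(0,r)$:
\begin{equation*}
u_g(\phi_r(t),t) - u_g(\phi_r(0),0) = \bigl[u_g(\phi_r(t),t) - u_g((0,r),t)\bigr] + \bigl[u_g((0,r),t) - u_g((0,r),0)\bigr],
\end{equation*}
and show that each bracket is $o(r)$ as $r\to 0$ uniformly for $0\le t\le 1/(\log\frac{1}{r})^{1-\alpha+\varepsilon}$. Throughout I will invoke the a posteriori regularity $u_g\in L^\infty_t \cpsi$ coming from Theorem~\ref{main thm 1} together with Lemma~\ref{vor to vel lemma}, as well as the symmetries $g(-y,t)=g(y,t)$ and $u(0,t)\equiv 0$ recorded in Lemma~\ref{symmetry lemma}.

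For the spatial bracket, I would first apply Lemma~\ref{access rate alpha lemma} to the particle pair $\{(0,r),0\}$ to obtain $|\phi_r(s)|\in[r/2,2r]$ for all $0\le s\le t$, since $Ms(\log\frac{1}{r})^{1-\alpha}\le M(\log\frac{1}{r})^{-\varepsilon}\to 0$ in the chosen time window. Integrating the velocity bound~\eqref{u} along $\phi_r$ then yields $|\phi_r(t)-(0,r)|\le Ctr\log\log\frac{1}{r}+Ct^2 r(\log\frac{1}{r})^{1-\alpha}$, which is $o(r)$. Since the modulus $\psi_\alpha(h)=h(\log\frac{1}{h})^{1-\alpha}$ is increasing near the origin, the spatial bracket is at most $C\psi_\alpha(|\phi_r(t)-(0,r)|)\le Cr\log\log\frac{1}{r}/(\log\frac{1}{r})^{\varepsilon}=o(r)$, hence $\le\delta r/2$ for small $r$.

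The temporal bracket is the main obstacle: a naive differentiation in time gives $\partial_t u_g = -\nabla^\perp\Delta^{-1}\nabla\cdot(uw)$, a singular integral not bounded on $L^\infty$, so estimating $\partial_t u_g$ pointwise is not viable. Instead I would keep the Biot--Savart representation, with kernel $K(z)=z^\perp/|z|^2$,
\begin{equation*}
u_g((0,r),t)-u_g((0,r),0) = \frac{1}{2\pi}\int_{\bbR^2}K\bigl((0,r)-y\bigr)\bigl(g(y,t)-g_0(y)\bigr)\,dy,
\end{equation*}
and split the integration into the near ball $\{|y|\le 2r\}$ and its complement. On the near ball $g_0\equiv 0$ (since $\supp g_0\subset\bbR^2\setminus B(0,20)$), Lemma~\ref{local g estimate} yields $|g(y,t)|\le Mt(\log\frac{1}{|y|})^{-\alpha}$, and $\int_{|y|\le 2r}|(0,r)-y|^{-1}\,dy\le Cr$, so this contribution is at most $Crt(\log\frac{1}{r})^{-\alpha}=o(r)$ in the time window.

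For the far region I would use the kernel estimate $|K((0,r)-y)-K(-y)|\le Cr/|y|^2$ valid for $|y|\ge 2r$, which follows from the mean value inequality together with $|\nabla K|\aleq 1/|z|^2$. The zeroth-order piece $\int_{|y|>2r}K(-y)(g(y,t)-g_0(y))\,dy$ vanishes because $K(-y)=-K(y)$ is odd while $g(\cdot,t)-g_0$ is even under $y\mapsto -y$, leaving a remainder bounded by $Cr\int_{|y|>2r}|y|^{-2}|g(y,t)-g_0(y)|\,dy$. Splitting this at $|y|=e^{-2}$ and using $|g(y,t)|\le Mt(\log\frac{1}{|y|})^{-\alpha}$ on the inner annulus (where $g_0=0$), together with the uniform bound $\|g(\cdot,t)-g_0\|_\infty\le Ct$ outside, which follows from the Lagrangian formula $g(\Phi_t(z),t)-g_0(z)=\int_0^t (-u_g\cdot\nabla w_s)(\Phi_s(z),s)\,ds$ and $\|u_g\cdot\nabla w_s\|_\infty<\infty$, the radial integral produces a factor $(\log\frac{1}{r})^{1-\alpha}$. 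This gives a far contribution of $Crt(\log\frac{1}{r})^{1-\alpha}=o(r)$ in our time window. Summing all pieces yields $|u_g(\phi_r(t),t)-u_g(\phi_r(0),0)|\le\delta r$ for $r$ sufficiently small depending on $\alpha$, $\varepsilon$ and $\delta$.
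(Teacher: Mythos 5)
Your argument is correct, and although its outer skeleton (spatial/temporal split at $(0,r)$, Biot--Savart for the temporal increment, near/far decomposition, symmetry cancellation) matches the paper's decomposition into $I_1,I_2,I_3$ in \eqref{eq20}, the treatment of the singular region is genuinely different and more elementary. The paper symmetrizes in $y$ and transfers the $\pm(0,r)$ shift onto the transported density $q=w_s\circ\Phi_t^{-1}-w_s$, so it must control $\nabla q$; this is what forces the whole chain of estimates on $\nabla u$ (the interior Schauder bound of Proposition \ref{del u proposition}), on $\nabla\Phi_t^{-1}$ (the term $J_1$), and on $\nabla w_s\circ\Phi_t^{-1}-\nabla w_s$ (the term $J_2$), and is rewarded with the sharp bound $I_2\lesssim tr\log\log\frac1r$. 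You instead keep the density $g(\cdot,t)-g_0$ intact and put the shift on the kernel, using oddness of $K$ against the even density together with $\abs{K((0,r)-y)-K(-y)}\lesssim r/\abs{y}^2$ for $\abs{y}\geq 2r$; the only information about the solution you then need is the pointwise bound $\abs{g(y,t)}\leq Mt\logalpha{y}$ of Lemma \ref{local g estimate}, and the price is the weaker bound $\lesssim tr\logonealpha{r}$ coming from $\int_{2r}^{R}\rho^{-1}\logalpha{\rho}\,d\rho$. Since the time window $t\leq(\log\frac1r)^{-(1-\alpha+\varepsilon)}$ is calibrated to absorb exactly a factor $(\log\frac1r)^{1-\alpha}$, this loss is harmless, so the conclusion $\leq\delta r$ still follows while Proposition \ref{del u proposition} and the $J_1,J_2$ machinery are bypassed entirely. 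Similarly, for the spatial bracket you invoke the $L^\infty_t\cpsi$ modulus of $u_g$ directly instead of the $v_1+v_2$ splitting of Proposition \ref{estimation of I1}; your bound lacks the factor of $t$ but is still $o(r)$, which is all the Key Lemma requires. Two small points to tidy up: the far-region split should be made at the radius $R(\alpha)$ of Lemma \ref{local g estimate} rather than at $e^{-2}$, and the uniform bound $\nrm{g(\cdot,t)-g_0}_{\infty}\leq Ct$ needs, besides the Duhamel term, the contribution $\nrm{g_0\circ\Phi_t^{-1}-g_0}_{\infty}\leq\nrm{g_0}_{Lip}\nrm{u}_{L^\infty_tL^\infty}t$; both are routine.
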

\vspace{2mm}

Since $g_0$ is compactly supported smooth function, $g_0 \in \cphi$ for given $\alpha \in (0,1)$ so that results in the Section \ref{sec: Conservation of Modulus of Continuity} could be applied. From
\begin{equation*}
	w_s(\Phi_t^{-1}(x))+g_0(\Phi_t^{-1}(x)) \underset{\eqref{equ1}}{=}w_0(\Phi_t^{-1}(x)) \underset{\eqref{equ1}}{=} w(x,t) \underset{\text{Lem }\ref{symmetry lemma} }{=} w_s(x)+g(x,t),
\end{equation*}
$g(x,t)$ could be written as
\begin{equation} \label{eq19}
	g(x,t)=\Big(w_s(\Phi_t^{-1}(x))-w_s(x)\Big)+g_0(\Phi_t^{-1}(x)).
\end{equation}
Considering the support of $w_s$ and $g_0$ with $\nrm{u}_{LL}<\infty$, there exists $T>0$ such that for all $0 \leq t \leq T$, 
\begin{equation} \label{supp condition}
	\left\{ \begin{aligned}
		& \supp(w_s(\Phi_t^{-1}(\cdot))-w_s(\cdot)) \subset B(0,2), \\
		& \supp(g_0(\Phi_t^{-1}(\cdot)) \subset A(20,40).
	\end{aligned} \right.
\end{equation}
From now on, \textbf{we always take all constraint on $t$ less than this $T$} so that this support condition \eqref{supp condition} holds. Using this decomposition, we can divide $\abs{u_g(\phi_r(t),t)-u_g(\phi_r(0),0)}$ into three parts

\begin{equation} \label{eq20}
	\begin{aligned}
		\abs{u_g(\phi_r(t),t)-u_g(\phi_r(0),0)} & \leq \abs{u_g(\phi_r(t),t)-u_g(\phi_r(0),t)}+\abs{u_g(\phi_r(0),t)-u_g(\phi_r(0),0)} \\
		&=	\abs{u_g(\phi_r(t),t)-u_g(\phi_r(0),t)}+ \frac{1}{2\pi} \abs{\int_{\bbR^2} \frac{((0,r)-y)^\perp}{\abs{(0,r)-y}^2} (g(y,t)-g_0(y)) \, dy} \\
		\underset{\eqref{eq19} }&{=} \underbrace{\abs{u_g(\phi_r(t),t)-u_g(\phi_r(0),t)}}_{\eqqcolon I_1(r,t)} \\
		& \qquad + \frac{1}{2\pi}  \underbrace{\bigg\lvert\int_{B(0,2)} \frac{((0,r)-y)^\perp}{\abs{(0,r)-y}^2} (w_s(\Phi_t^{-1}(y))-w_s(y)) \, dy\bigg\rvert}_{\eqqcolon I_2(r,t)}  \\
		& \qquad + \frac{1}{2\pi}  \underbrace{\bigg\lvert\int_{\bbR^2 \backslash B(0,20)} \frac{((0,r)-y)^\perp}{\abs{(0,r)-y}^2} (g_0(\Phi_t^{-1}(y))-g_0(y)) \, dy\bigg\rvert}_{\eqqcolon I_3(r,t)} .
	\end{aligned}
	\end{equation}
We now estimate each of $I_1, I_2$ and $I_3$.

\begin{proposition} [Estimation of $I_1$] \label{estimation of I1}
	Let $\alpha \in (0,1)$ be given.  Then, there exists $R=R(\alpha)>0, 0<T\leq 1$ and $M>0$ such that for all $0<r<R$ and $0 \leq t \leq T$,
	\begin{equation*}
		I_1(r,t)=\abs{u_g(\phi_r(t),t)-u_g(\phi_r(0),t)} \leq Mtr \left( \log \frac{1}{r} \right)^{1-\alpha}e^{Mt\left( \log \frac{1}{r}\right)^{1-\alpha}}.
	\end{equation*}
\end{proposition}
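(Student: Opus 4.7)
The plan is to represent $u_g(\phi_r(t),t) - u_g(\phi_r(0),t)$ as a line integral of $\nabla u_g(\cdot,t)$ along the Lagrangian trajectory itself, not along a straight segment, so that the path stays in a region where the pointwise bounds of Section~\ref{subsec: Construction of Initial Data and Basic Properties of the Solution} are all applicable. By Lemma~\ref{access rate alpha lemma} applied with $x=(0,r)$ and $y=0$ (together with Lemma~\ref{symmetry lemma}, which gives $u(0,\cdot)\equiv 0$ so that $\phi_0(s)\equiv 0$), one has
\[
	r\,e^{-Ms\logonealpha{r}}\;\leq\;|\phi_r(s)|\;\leq\;r\,e^{Ms\logonealpha{r}}\qquad(0\leq s\leq t),
\]
so $\phi_r$ never reaches the origin on $[0,t]$. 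Since $w(\cdot,t)$ is smooth on $\bbR^2\setminus\{0\}$ by propagation of regularity, so is $u_g(\cdot,t)=\nabla^\perp\Delta^{-1}g(\cdot,t)$, and therefore
\[
	u_g(\phi_r(t),t) - u_g(\phi_r(0),t) \;=\; \int_0^t (\nabla u_g)(\phi_r(s),t)\,u(\phi_r(s),s)\,ds,
\]
the identity $\tfrac{d}{ds}\bigl[u_g(\phi_r(s),t)\bigr]=(\nabla u_g)(\phi_r(s),t)\,\dot\phi_r(s)=(\nabla u_g)(\phi_r(s),t)\,u(\phi_r(s),s)$ being valid because $\phi_r$ is absolutely continuous and $u_g(\cdot,t)$ is $C^1$ in a neighborhood of the trajectory.

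I would then bound the two factors of the integrand pointwise. For the velocity, \eqref{us} and \eqref{ug} combined with $|\phi_r(s)|\leq r\,e^{Ms\logonealpha{r}}$ and the elementary observation that for $r$ small and $s\leq T\leq 1$ one has $\log\log(1/|\phi_r(s)|)+1+s\logonealpha{\phi_r(s)}\lesssim \logonealpha{r}$, yields
\[
	|u(\phi_r(s),s)| \;\lesssim\; r\,\logonealpha{r}\,e^{Ms\logonealpha{r}}.
\]
For the gradient, writing $\nabla u_g=\nabla u-\nabla u_s$ and using \eqref{del us} together with \eqref{del u} of Proposition~\ref{del u proposition}, the lower bound $|\phi_r(s)|\geq r\,e^{-Ms\logonealpha{r}}$ translates into $\log(1/|\phi_r(s)|)\leq 2\log(1/r)$ (valid for $r$ small and $t\leq T$, so that $Mt\logonealpha{r}\leq\log(1/r)$). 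The exponential term in \eqref{del u} then dominates and produces
\[
	|\nabla u_g(\phi_r(s),t)| \;\lesssim\; e^{Mt\logonealpha{r}}.
\]

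Multiplying the two pointwise bounds, using $s\leq t$ to replace $e^{Ms\logonealpha{r}}$ by $e^{Mt\logonealpha{r}}$, and integrating $s\in[0,t]$ gives, after harmlessly enlarging $M$,
\[
	I_1(r,t) \;\lesssim\; tr\,\logonealpha{r}\,e^{Mt\logonealpha{r}},
\]
which is the claim. The main technical obstacle will be the bookkeeping of smallness regimes: $r$ must be small enough that the near-origin asymptotics \eqref{us}–\eqref{ug} and \eqref{del us}–\eqref{del u} all apply simultaneously along the entire trajectory, and $T$ must be chosen so that $Mt\logonealpha{r}\leq \log(1/r)$, ensuring that the upper and lower bounds on $|\phi_r(s)|$ keep $\log(1/|\phi_r(s)|)$ comparable to $\log(1/r)$. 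Once these are arranged, no new idea is needed; the crucial trick is simply using the trajectory (rather than a straight segment) as the path, so that the lower bound $|\phi_r(s)|\gtrsim r\,e^{-Ms\logonealpha{r}}$ is available at every point where $\nabla u_g$ is evaluated.
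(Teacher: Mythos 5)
Your overall architecture (follow the Lagrangian path so that the lower bound $|\phi_r(s)|\geq r e^{-Ms\logonealpha{r}}$ from Lemma \ref{access rate alpha lemma} is available, then bound $|\nabla u_g|$ and $|u|$ pointwise along it) is reasonable, but the step where you claim $|\nabla u_g(\phi_r(s),t)|\lesssim e^{Mt\logonealpha{r}}$ is wrong, and the error is not cosmetic. Writing $\nabla u_g=\nabla u-\nabla u_s$ and invoking \eqref{del us} and \eqref{del u} gives, by the triangle inequality,
\begin{equation*}
	|\nabla u_g(\phi_r(s),t)|\;\lesssim\;\log\log\frac{1}{r}\;+\;t\logonealpha{r}\;+\;e^{Mt\logonealpha{r}},
\end{equation*}
and the $\log\log\frac1r$ term is \emph{not} dominated by the exponential uniformly in $t\in[0,T]$: at $t=0$ (and more generally for $t\lesssim (\log\tfrac1r)^{-(1-\alpha)}$) the exponential is $O(1)$ while $\log\log\frac1r\to\infty$. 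The $\log\log$ contributions of $\nabla u$ and $\nabla u_s$ should cancel, but the triangle inequality destroys that cancellation, and nothing in Section \ref{subsec: Construction of Initial Data and Basic Properties of the Solution} gives you a bound on $\nabla u_g$ alone without it. Carrying the extra term through, your method yields $I_1\lesssim tr\logonealpha{r}\log\log\frac1r\, e^{Mt\logonealpha{r}}$, which is strictly weaker than the stated proposition (the factor $\log\log\frac1r$ cannot be absorbed into $M$, into $\logonealpha{r}$, or into the exponential for small $t$). This weaker bound would in fact still suffice for the Key Lemma, where $t\leq (\log\tfrac1r)^{-(1-\alpha+\varepsilon)}$ kills the extra factor, but it does not prove the proposition as stated.

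Two ways to repair it. (a) The paper's route avoids $\nabla u_g$ altogether: decompose $u_g=v_1+v_2$ with $v_1=\nabla^\perp\Delta^{-1}F$ and $v_2=\nabla^\perp\Delta^{-1}G$ as in Lemma \ref{local g estimate}; since $F$ is supported away from the origin, $v_1$ is Lipschitz on $B(0,1)$, while $\nrm{v_2(\cdot,t)}_{\cpsi}\leq Mt$, so one bounds $\nrm{v_1}_{Lip}|\phi_r(t)-\phi_r(0)|$ (using your displacement estimate, which is exactly \eqref{equ5}) plus $|v_2(\phi_r(t),t)|+|v_2(\phi_r(0),t)|\leq Mt\,r\logonealpha{r}e^{Mt\logonealpha{r}}$. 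The factor $t$ in the $\cpsi$-norm of $v_2$ is precisely the cancellation you lose. (b) If you insist on a gradient bound, rerun the interior Schauder argument of Proposition \ref{del u proposition} with $\Delta\psi_g=g$ in place of $\Delta\psi=w$: the $L^\infty$ term is then $\nrm{g}_{L^\infty(B(x,|x|/2))}\lesssim t\logalpha{x}$ by \eqref{g} rather than $\log\log\frac{1}{|x|}$, and the H\"older seminorm of $g=w-w_s$ is controlled by \eqref{eq18} plus the elementary bound $|x|^\alpha[w_s]_{C^{0,\alpha}(B(x,|x|/2))}\lesssim (\log\tfrac{1}{|x|})^{-1}$; this gives $|\nabla u_g(x,t)|\lesssim 1+t\logonealpha{x}+e^{Mt\logonealpha{x}}$, with which your product-and-integrate computation closes.
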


\begin{proof}
As in the proof of Lemma \ref{local g estimate}, we can decompose $u_g$ into
\begin{equation*}
	u_g = \underbrace{\nabla^\perp \Delta^{-1}F}_{\eqqcolon v_1}+\underbrace{\nabla^\perp \Delta^{-1}G}_{\eqqcolon v_2},
\end{equation*}
where $F$ and $G$ are defined same as in the proof of Lemma \ref{local g estimate}. Recall that $v_1 \in L^\infty([0,1];Lip(B(0,1)))$ and $v_2 \in L^\infty([0,1]; \cpsi)$ with
\begin{equation*}
	\nrm{v_2(\cdot,t)}_{\cpsi} \leq Mt
\end{equation*}
for some constant $M=M(\alpha)>0$.

Take $0 < T \leq 1$ sufficiently small so that $\abs{\phi_r(t)-\phi_r(0)} < e^{-3}$ for all $0 \leq t \leq T$. Also, take $R>0$ sufficiently small so that $re^{M \left( \log \frac{1}{r}\right)^{1-\alpha}}<e^{-3}$ for all $0 < r \leq R$, where $M>0$ is a constant in Lemma \ref{access rate alpha lemma}. Then,
\begin{align*}
	& \abs{u_g(\phi_r(t),t)-u_g(\phi_r(0),t)} \\
	& \leq \abs{v_1(\phi_r(t),t)-v_1(\phi_r(0),t)}+ \abs{v_2 (\phi_r(t),t)} + \abs{v_2 (\phi_r(0),t)} \\
	&\leq \nrm{v_1}_{L^\infty_T Lip(B(0,1))} \abs{\phi_r(t)-\phi_r(0)} +Mt \left( \abs{\phi_r(t)} \left( \log \frac{1}{\abs{\phi_r(t)}}\right)^{1-\alpha}+r \left( \log \frac{1}{r}\right)^{1-\alpha}  \right) \\
	\underset{\text{Lem }\ref{access rate alpha} }&{\leq} \nrm{v_1}_{L^\infty_T Lip(B(0,1))} \abs{\phi_r(t)-\phi_r(0)} +Mt \left( r\left( \log \frac{1}{r} \right)^{1-\alpha}e^{Mt\left( \log \frac{1}{r}\right)^{1-\alpha}} +r \left( \log \frac{1}{r}\right)^{1-\alpha}  \right).
\end{align*}
Here, we can estimate $\abs{\phi_r(t)-\phi_r(0)}$ by
\begin{equation} \label{equ5}
	\begin{aligned}
	\abs{\phi_r(t)-\phi_r(0)} &= \abs{\int_0^t  \frac{d}{ds} \phi_r(s)\, ds} \\
	& \leq \int_0^t \abs{u(\phi_r(s),s)}\, ds \\
	\underset{\text{Lem } \ref{access rate alpha lemma} }&{\leq} t \cdot \sup_{x \in B ( 0,re^{Mt\left(\log \frac{1}{r}\right)^{1-\alpha}})} \abs{u(x)} \\
	\underset{\eqref{u} }&{\leq} t \left( re^{Mt\left(\log \frac{1}{r}\right)^{1-\alpha}} \log \log \frac{1}{r} + M r\left( \log \frac{1}{r}\right)^{1-\alpha} e^{Mt\left(\log \frac{1}{r}\right)^{1-\alpha}}\right).
	\end{aligned}
\end{equation}
	Combining these estimates leads to Proposition \ref{estimation of I1}.
\end{proof}

Before estimating $I_2$, which is the hardest part, we give the estimate for $I_3$.

\begin{proposition} [Estimation of $I_3$] \label{estimation of I3}
	Let $\alpha \in (0,1)$ be given. Then, there exist $ 0<T\leq 1$ and $M>0$ such that for all $0<r<e^{-2}$ and $0 \leq t \leq T$,
	\begin{equation*}
		I_3(r,t)=\bigg\lvert\int_{\bbR^2 \backslash B(0,20)} \frac{((0,r)-y)^\perp}{\abs{(0,r)-y}^2} (g_0(\Phi_t^{-1}(y))-g_0(y)) \, dy\bigg\rvert \leq Mtr.
	\end{equation*}
\end{proposition}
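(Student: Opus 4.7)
The naive bound, obtained by combining $|g_0(\Phi_t^{-1}(y)) - g_0(y)| \leq \|\nabla g_0\|_\infty \|u\|_{L^\infty_t L^\infty}\, t \aleq t$ with the estimate $|((0,r)-y)^\perp|/|(0,r)-y|^2 \aleq 1/|y|$ on the support, gives only $I_3 \aleq t$, which misses the required factor of $r$. My plan is to recover this factor by exploiting the even--odd symmetries of the setup.

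First I would note that since $g_0(-x)=g_0(x)$ and $u(-x,t)=-u(x,t)$ (Lemma \ref{symmetry lemma}), the flow is odd, i.e.\ $\Phi_t^{-1}(-y)=-\Phi_t^{-1}(y)$, so the factor $f(y) := g_0(\Phi_t^{-1}(y))-g_0(y)$ is even in $y$. Using the support condition \eqref{supp condition} to restrict integration to $A(20,40)$ and then symmetrizing via the substitution $y \mapsto -y$, one obtains
\begin{equation*}
	I_3(r,t) = \frac{1}{2} \bigg\lvert \int_{A(20,40)} \big( K(y) + K(-y) \big) f(y)\, dy \bigg\rvert,
\end{equation*}
where $K(y) := ((0,r)-y)^\perp / \abs{(0,r)-y}^2$. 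A direct computation, or equivalently a first-order Taylor expansion in $r$ around $r=0$ using $r < e^{-2} \ll |y|$, then shows $|K(y)+K(-y)| \leq Cr/|y|^2$ for an absolute constant $C$ (the $O(1/|y|)$ piece of $K$ is odd in $y$ and therefore cancels, leaving only the $r$-derivative at leading order).

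Combining this pointwise bound with $|f(y)| \leq \|\nabla g_0\|_\infty\,\|u\|_{L^\infty_t L^\infty}\, t$ and the fact that $\int_{A(20,40)} |y|^{-2}\, dy$ is finite yields the claimed bound $I_3(r,t) \leq M t r$ for all $0<r<e^{-2}$ and $0 \leq t \leq T$. The only delicate step is the symmetrization; without it, the Biot--Savart kernel decays like $1/|y|$ on the annulus, too slowly to produce a factor of $r$. The even--odd structure inherited from the symmetric initial data $g_0$ and the odd velocity $u$ is precisely what promotes this to an effective $r/|y|^2$ decay, which is exactly what is needed.
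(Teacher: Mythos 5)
Your proposal is correct and follows essentially the same route as the paper: the paper also exploits the evenness of $h(y,t)=g_0(\Phi_t^{-1}(y))-g_0(y)$, pairs $y$ with $-y$ over the annulus $A(20,40)$, and bounds the symmetrized kernel $K(y)+K(-y)$ by $Cr/(\abs{(0,r)+y}\abs{(0,r)-y})\aleq r/\abs{y}^2$, combined with the Lipschitz-in-time bound $\abs{h}\aleq t$. The only cosmetic difference is that the paper splits the integral into half-planes before substituting $y\mapsto -y$ rather than writing the symmetrization as $\tfrac12\int(K(y)+K(-y))f(y)\,dy$.
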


\begin{proof}
	For convenience, let
	\begin{equation*}
		g_0(\Phi_t^{-1}(y))-g_0(y) \eqqcolon h(y,t).
	\end{equation*}
	 Then, there exists $T>0$ such that for all $0 \leq t \leq T$, $\supp (h(\cdot,t)) \subset A(20,40)$. Also, since $g_0$ is Lipschitz,
	\begin{equation} \label{eq21}
		\abs{h(y,t)} \leq \nrm{g_0}_{Lip} \abs{\Phi_t^{-1}(y)-y} \leq \nrm{g_0}_{Lip} \nrm{u}_{L^\infty_T L^\infty} t
	\end{equation}
	for all $0 \leq t \leq T$. Then, as $h(y,t)=h(-y,t)$, we have
	\begin{align*}
		I_3(r,t) &=\bigg\lvert\int_{\bbR^2 \backslash B(0,20)} \frac{((0,r)-y)^\perp}{\abs{(0,r)-y}^2} h(y,t) \, dy\bigg\rvert \\
		&=\bigg\lvert\int_{A(20,40)} \frac{((0,r)-y)^\perp}{\abs{(0,r)-y}^2} h(y,t) \, dy\bigg\rvert \\
		&=  \Bigg\vert \int_{A(20,40) \cap \set{y \geq 0} } \frac{((0,r)-y)^\perp}{\abs{(0,r)-y}^2} h(y, t) \, dy+\int_{A(20, 40) \cap \set{y \leq 0}} \frac{((0,r)-y)^\perp}{\abs{(0,r)-y}^2} h(y, t) \, dy \Bigg\vert \\
		&= \abs{\int_{A(20, 40) \cap \set{y \geq 0} } \frac{((0,r)-y)^\perp}{\abs{(0,r)-y}^2} h(y, t) \, dy+\int_{A(20, 40) \cap \set{y \geq 0}} \frac{((0,r)+y)^\perp}{\abs{(0,r)+y}^2} h(-y, t) \, dy} \\
			&=\abs{\int_{A(20, 40) \cap \set{y \geq 0} } \left(\frac{((0,r)+y)^\perp}{\abs{(0,r)+y}^2}-\frac{(-(0,r)+y)^\perp}{\abs{(0,r)-y}^2}\right) h(y, t) \, dy} \\
			&\leq \text{Vol}(A(20,40)) \cdot \sup_{y \in A(20, 40)} \left(\frac{((0,r)+y)^\perp}{\abs{(0,r)+y}^2}-\frac{(-(0,r)+y)^\perp}{\abs{(0,r)-y}^2}\right)  \cdot \sup_{\substack{y \in A(20, 40) \\ 0 \leq t \leq T}} \abs{h(y,t)} \\
			\underset{\eqref{eq21} }&{\leq} \Bigg( \sup_{y \in A(20,40)} \frac{1}{\pi} \frac{2r}{\abs{(0,r)+y} \abs{(0,r)-y}} \Bigg) \cdot Mt \\
			&= Mtr.
	\end{align*}
\end{proof}

\begin{proposition} [Estimation of $I_2$] \label{estimation of I2}
	Let $\alpha \in (0,1)$ and $\varepsilon>0$ be given. Then, there exist $R=R(\alpha, \varepsilon)>0$ and $M=M(\alpha,\varepsilon)>0$ such that for all $0<r<R$ and $0 \leq t \leq \frac{1}{\left( \log \frac{1}{r}\right)^{1-\alpha+\varepsilon}}$,
	\begin{equation*}
		I_2(r,t) = \bigg\vert\int_{B(0,2)} \frac{( (0, r) -y)^\perp}{\abs{(0,r)-y }^2} (w_s(\Phi_t^{-1}(y))-w_s(y)) \, dy\bigg\vert \leq Mtr \log \log \frac{1}{r}.
	\end{equation*}
\end{proposition}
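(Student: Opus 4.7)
The plan is to exploit the identity, on $B(0,2)$, between the integrand of $I_2$ and the perturbation $g$, and then to combine a sharper pointwise bound for $g$ near the origin with the cancellation in the Biot--Savart kernel produced by the evenness of $g$. The support condition on $g_0$ in \eqref{initial data} and the boundedness of $u$ imply that, for small $t$, $g_0(\Phi_t^{-1}(y))=0$ whenever $y\in B(0,2)$, so by \eqref{eq19} and Lemma \ref{symmetry lemma}
\begin{equation*}
    w_s(\Phi_t^{-1}(y))-w_s(y)=g(y,t)\qquad\text{on }B(0,2).
\end{equation*}
Using the evenness of $g$ inherited from $g_0$, I fold the integral (as in the proof of Proposition \ref{estimation of I3}) and obtain
\begin{equation*}
    I_2=\bigg|\int_{B(0,2)\cap\{y_2\ge 0\}}\!\big[K((0,r)-y)+K((0,r)+y)\big]\,g(y,t)\,dy\bigg|,\qquad K(v)\coloneqq\tfrac{v^\perp}{|v|^2},
\end{equation*}
where a short calculation gives the cancellation bound $|K((0,r)-y)+K((0,r)+y)|\le Cr/|y|^2$ whenever $|y|\ge 2r$.

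The new ingredient I need is a pointwise bound sharper than Lemma \ref{local g estimate}, namely
\begin{equation*}
    |g(y,t)|\le \frac{Ct}{\log(1/|y|)}\qquad \text{for }r\le|y|\le e^{-2},\ 0\le t\le \frac{1}{(\log 1/r)^{1-\alpha+\varepsilon}}.
\end{equation*}
I plan to derive it from the Lagrangian formula \eqref{eq5}, which on $B(0,2)$ (where $g_0\circ\Phi_t^{-1}$ vanishes) reduces to
\begin{equation*}
    g(y,t)=-\int_0^t u_g(X_s(y),s)\cdot \nabla w_s(X_s(y))\,ds,\qquad X_s(y)\coloneqq\Phi_s(\Phi_t^{-1}(y)).
\end{equation*}
Lemma \ref{access rate alpha lemma} (applied with the second point at the origin, using $\Phi(0,\cdot)\equiv 0$ from Lemma \ref{symmetry lemma}) gives $|X_s(y)|\simeq|y|$ uniformly for $s\in[0,t]$ in the stated time window, because $t(\log 1/|y|)^{1-\alpha}\le (\log 1/r)^{-\varepsilon}\le 1$. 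Then \eqref{ug} yields $|u_g(X_s(y),s)|\le C|y|$ since the factor $(\log 1/|X_s(y)|)^{1-\alpha}s$ is similarly absorbed, and combined with $|\nabla w_s(z)|=1/(|z|\log(1/|z|))$ near the origin the integrand is bounded by $C/\log(1/|y|)$; integrating in $s$ gives the claim.

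Given the sharper pointwise bound, I conclude by splitting the Biot--Savart integral into three regions. The inner region $|y|\le 2r$, using the trivial $|K((0,r)-y)|\le 1/|(0,r)-y|$ together with $|g(y,t)|\le Mt(\log 1/r)^{-\alpha}$ from Lemma \ref{local g estimate}, contributes $\lesssim tr(\log 1/r)^{-\alpha}$, negligible compared to $tr\log\log(1/r)$. The middle region $2r\le|y|\le e^{-2}$, combining the symmetrized kernel bound with the sharper pointwise estimate, yields
\begin{equation*}
    \int_{2r\le|y|\le e^{-2}}\frac{r}{|y|^2}\cdot\frac{Ct}{\log(1/|y|)}\,dy=2\pi Ctr\int_{2r}^{e^{-2}}\frac{d\rho}{\rho\log(1/\rho)}\lesssim tr\log\log(1/r),
\end{equation*}
and the outer region $e^{-2}\le|y|\le 2$ contributes only $O(tr)$, since there $w_s$ is smooth and $g$ is uniformly bounded. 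The main obstacle is the derivation of the sharper pointwise estimate on $g$: the short-time constraint $t\le 1/(\log 1/r)^{1-\alpha+\varepsilon}$ must be deployed carefully so that both the access-rate exponent in Lemma \ref{access rate alpha lemma} and the $(\log 1/|y|)^{1-\alpha}$ correction in \eqref{ug} are absorbed into absolute constants along the whole trajectory $X_s(y)$; without this absorption the estimate degrades to $\lesssim tr(\log 1/r)^{1-\alpha}$, which falls short of the target $Mtr\log\log(1/r)$.
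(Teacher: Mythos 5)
Your proposal is correct in substance, but for the main (annular) contribution it takes a genuinely different route from the paper. The inner region $|y|\lesssim r$ is treated identically in both arguments (kernel in $L^1$ against $|g|\le Mt(\log\frac1r)^{-\alpha}$ from \eqref{g}). For the annulus, the paper keeps the full kernel $|y|^{-1}$ and instead differences the \emph{density}: it writes the integrand as $q(y-(0,r),t)-q(y+(0,r),t)$ with $q=w_s\circ\Phi_t^{-1}-w_s$ and bounds $\nabla q$ via \eqref{equ3}--\eqref{eq30}, which forces it to control $\nabla\Phi_t^{-1}-I$ and hence to invoke the Schauder-based gradient bound of Proposition \ref{del u proposition}. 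You instead difference the \emph{kernel}, using $|K((0,r)-y)+K((0,r)+y)|\lesssim r/|y|^2$ for $|y|\ge 2r$ together with the evenness of $g$, and compensate with the sharpened zeroth-order bound $|g(y,t)|\lesssim t/\log(1/|y|)$. That bound does follow as you sketch from the Duhamel formula, \eqref{ug}, $|\nabla w_s(z)|=(|z|\log(1/|z|))^{-1}$ and Lemma \ref{access rate alpha lemma}: in the window $t\le(\log\frac1r)^{-(1-\alpha+\varepsilon)}$ and for $|y|\ge 2r$ the factors $e^{Mt(\log(1/|y|))^{1-\alpha}}$ and $(\log(1/|X_s(y)|))^{1-\alpha}s$ are all $O(1)$, so $|X_s(y)|\simeq|y|$ and $|u_g(X_s(y),s)|\lesssim|y|$ along the whole trajectory. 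Your route is more elementary for this proposition (no control of $\nabla u$ or $\nabla\Phi_t^{-1}$ is needed), and it closes with the clean integral $\int_{2r}^{R}\frac{d\rho}{\rho\log(1/\rho)}=O\bigl(\log\log\frac1r\bigr)$; by contrast the paper's integrand carries an extra $\log\log(1/\rho)$ whose antiderivative is $\tfrac12(\log\log(1/\rho))^2$, so its displayed computation really yields $tr(\log\log\frac1r)^2$ (harmless for Theorem \ref{main thm 2}, but your version matches the stated bound exactly). Two small patches are needed in your writeup: (i) \eqref{ug} is only guaranteed for $|X_s(y)|<R(\alpha)$ with $R(\alpha)$ possibly smaller than $e^{-2}$, so the middle region should stop at that $R$ and the range $R\le|y|\le 2$ should be absorbed into the $O(tr)$ outer estimate, where $w_s(\Phi_t^{-1}(y))-w_s(y)=O(t)$ because $\nabla w_s$ is bounded away from the origin; (ii) the comparability $|X_s(y)|\simeq|y|$ uses Lemma \ref{access rate alpha lemma} both forwards and through $\Phi_t^{-1}$, which holds by the same ODE comparison and is also used implicitly in the paper.
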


\begin{proof}
	Remember that we only consider sufficiently small $t$ so that $\supp(w_s(\Phi_t^{-1}(\cdot))-w_s(\cdot)) \subset B(0,2)$. Thus, for all $0<r<1$, we can decompose $I_2$ into two parts so that $I_2 \leq I_{21}+ I_{22}$ as following:
	\begin{align*}
		I_2(r,t)&=\bigg\vert \int_{B(0,2)} \frac{( (0, r) -y)^\perp}{\abs{(0,r)-y }^2} (w_s(\Phi_t^{-1}(y))-w_s(y)) \, dy\bigg\vert \\
		&= \bigg\vert \int_{B((0,r),3)} \frac{( (0, r) -y)^\perp}{\abs{(0,r)-y }^2} (w_s(\Phi_t^{-1}(y))-w_s(y)) \, dy\bigg\vert \\
		&\leq \underbrace{\bigg\vert \int_{B((0,r),2r)} \frac{( (0, r) -y)^\perp}{\abs{(0,r)-y }^2} (w_s(\Phi_t^{-1}(y))-w_s(y)) \, dy\bigg\vert}_{\eqqcolon I_{21}(r,t)} \\
		& \qquad + \underbrace{\bigg\vert \int_{B((0,r),3)\, \backslash \,B((0,r),2r)} \frac{( (0, r) -y)^\perp}{\abs{(0,r)-y }^2} (w_s(\Phi_t^{-1}(y))-w_s(y)) \, dy\bigg\vert}_{\coloneqq I_{22}(r,t)}.
	\end{align*}
	
	\bigskip

	$\bullet$ \underline{Estimation of $I_{21}$}
	
	\begin{align*}
		I_{21}(r,t) &= \bigg\vert \int_{B((0,r),2r)} \frac{( (0, r) -y)^\perp}{\abs{(0,r)-y }^2} (w_s(\Phi_t^{-1}(y))-w_s(y)) \, dy\bigg\vert \\
		& \leq \sup_{y \in B((0,r),2r)} \vert w_s(\Phi_t^{-1}(y))-w_s(y)\vert \cdot \int_{B((0,r),2r)} \frac{( (0, r) -y)^\perp}{\abs{(0,r)-y }^2} \, dy.
	\end{align*}
	Combining the equality \eqref{eq19} and the support information on $g_0(\Phi_t^{-1}(\cdot))$ \eqref{supp condition} with Lemma \ref{local g estimate}, there exist constants $R>0, M=M(\alpha)>0$ and $T>0$ such that for all $0<r<R$ and $0 \leq t \leq T$,
	\begin{align*}
		\sup_{y \in B((0,r),2r)} \abs{w_s(\Phi_t^{-1}(y))-w_s(y)} \underset{\eqref{eq19} }&{=} \sup_{y \in B((0,r),2r)} \abs{g(x,t)-g_0(\Phi_t^{-1}(x))} \\
		\underset{\eqref{supp condition} }&{\leq} \sup_{y \in B((0,r),2r)} \abs{g(x,t)} \\
		\underset{\eqref{g} }&{\leq}  Mt \left( \log \frac{1}{3r} \right)^{-\alpha} \\
		&\leq Mt \left( \log \frac{1}{r} \right)^{-\alpha}.
	\end{align*}
	Therefore, as $\int_{B((0,r),2r)} \frac{( (0, r) -y)^\perp}{\abs{(0,r)-y }^2} \, dy=4\pi r,$ there exists $R>0$ and $T>0$ such that for all $0<r<R$ and $0 \leq t \leq T$, 
	\begin{equation} \label{equ4}
		I_{21}(r,t) \leq Mtr \left( \log \frac{1}{r} \right)^{-\alpha}.
	\end{equation}

	\hfill \underline{(End of the estimation of $I_{21}$)}

	$\bullet$ \underline{Estimation of $I_{22}$}
	
	We also divide $I_{22}$ into two parts so that $I_{22} \leq I_{221}+I_{222}$.
	
	\begin{equation} \label{equ6}
		\begin{aligned}
		&I_{22}(r,t) \\
			&= \bigg\vert\int_{B((0,r),3) \backslash B((0,r),2r)} \frac{((0,r)-y)^\perp}{\abs{(0,r)-y}^2} (w_s(\Phi_t^{-1}(y))-w_s(y)) \, dy\bigg\vert \\
			&=\bigg\vert\int_{A(2r,3)} \frac{(-y)^\perp}{\abs{-y}^2} (w_s(\Phi_t^{-1}((0,r)+y))-w_s((0,r)+y)) \, dy\bigg\vert \\
			&= \bigg\vert\int_{A(2r,3) \cap \set{y \geq 0}} \frac{(-y)^\perp}{\abs{-y}^2} (w_s(\Phi_t^{-1}((0,r)+y))-w_s((0,r)+y)) \, dy \\
			& \qquad +\int_{A(2r,3)\cap \set{y \leq 0}} \frac{(-y)^\perp}{\abs{-y}^2} (w_s(\Phi_t^{-1}((0,r)+y))-w_s((0,r)+y)) \, dy\bigg\vert \\
			&= \bigg\vert\int_{A(2r,3) \cap \set{y \geq 0}} \frac{(-y)^\perp}{\abs{-y}^2} (w_s(\Phi_t^{-1}((0,r)+y))-w_s((0,r)+y)) \, dy \\
			& \qquad +\int_{A(2r,3)\cap \set{y \geq 0}} \frac{y^\perp}{\abs{y}^2} (\underbrace{w_s(\Phi_t^{-1}((0,r)-y))-w_s((0,r)-y)}_{w_s(\Phi_t^{-1}(y-(0,r))) - w_s(y-(0,r))}) \, dy\bigg\vert \\
			&=\bigg\vert \int_{A(2r,3) \cap \set{y \geq 0}} \frac{y^\perp}{\abs{y}^2} \bigg( \Big( w_s(\Phi_t^{-1}(y-(0,r)))-w_s(y-(0,r)) \Big)- \Big(w_s(\Phi_t^{-1}(y+(0,r)))-w_s(y+(0,r)) \Big)  \bigg) \, dy \bigg\vert \\
			& \leq  \underbrace{\bigg\vert \int_{\substack{A(2r,R) \cap  \set{y \geq 0}}}\frac{y^\perp}{\abs{y}^2} \bigg( \Big( w_s(\Phi_t^{-1}(y-(0,r)))-w_s(y-(0,r)) \Big)- \Big(w_s(\Phi_t^{-1}(y+(0,r)))-w_s(y+(0,r)) \Big)  \bigg) \, dy \bigg\vert}_{\eqqcolon I_{221}(r,t)}  \\
			&~~+  \underbrace{\bigg\vert \int_{A(R,3) \cap \set{y \geq 0}} \frac{y^\perp}{\abs{y}^2} \bigg( \Big( w_s(\Phi_t^{-1}(y-(0,r)))-w_s(y-(0,r)) \Big)- \Big(w_s(\Phi_t^{-1}(y+(0,r)))-w_s(y+(0,r)) \Big)  \bigg) \, dy \bigg\vert}_{\eqqcolon I_{222}(r,t)}.
		\end{aligned}
	\end{equation}
	
	\bigskip
	
	\quad $\bullet$ \underline{Estimation of $I_{221}$}
	
	To bound $I_{221}$ using mean value theorem, we first investigate the function
	\begin{equation*}
		q(x,t) \coloneqq w_s(\Phi_t^{-1}(x))-w_s(x).
	\end{equation*}
	By chain rule, we have
	\begin{equation} \label{equ3}
		\begin{aligned}
			\nabla q(x,t) &= \nabla w_s(\Phi_t^{-1}(x)) \cdot \nabla \Phi_t^{-1}(x) - \nabla w_s (x) \\
			&= \nabla w_s (\Phi_t^{-1}(x)) (\underbrace{\nabla \Phi_t^{-1}(x)-I}_{\eqqcolon J_1(x,t)}) + \underbrace{\nabla w_s (\Phi_t^{-1}(x)) - \nabla w_s(x)}_{\eqqcolon J_2(x,t)}.
		\end{aligned}
	\end{equation}
	
	\bigskip
	
	\quad \quad $\bullet$ \underline{Estimation of $J_1$}
	
	By differentiating an identity
	\begin{equation*}
		\Phi_t^{-1}(\Phi(x,t)) =x,
	\end{equation*}
	with respect to space variable, we have
	\begin{equation*}
		\nabla \Phi_t^{-1}(\Phi(x,t)) \cdot \nabla \Phi(x,t) =I.
	\end{equation*}
	Replacing $x$ with $\Phi_t^{-1}(x)$ leads to
	\begin{equation*}
		\nabla \Phi_t^{-1}(x)= (\nabla \Phi (\Phi_t^{-1}(x),t))^{-1}.
	\end{equation*}
	Therefore,
	\begin{equation} \label{eq22}
		J_1(x,t)= \nabla \Phi^{-1}_t (x) - I = \left( \nabla \Phi ( \Phi^{-1}_t (x), t) \right) ^{-1} -I=\left( \nabla \Phi ( \Phi^{-1}_t (x), t) \right)^{-1} (I- \nabla \Phi (\Phi^{-1}_t (x),t) ).
	\end{equation}
	We now derive an equation for $\nabla \Phi (\Phi_t^{-1}(x),t)$. Replacing $x$ with $\Phi_t^{-1}(x)$ after taking nabla on both sides of the identity
	\begin{equation*}
		\frac{d}{dt} \Phi (x,t) = u( \Phi (x,t),t),
	\end{equation*}
	we get
	\begin{equation} \label{eq23}
		\frac{d}{dt} \nabla \Phi ( \Phi^{-1}_t (x), t) = \nabla u(x,t) \cdot \nabla \Phi (\Phi^{-1}_t (x),t).
	\end{equation}
	Applying Gr\"onwall inequality to \eqref{eq23} yields
	\begin{equation} \label{eq24}
		\begin{aligned}
			\abs{\nabla (\Phi_t^{-1}(x),t)} & \leq e^{\int_0^t \nabla u(x,s) \, ds} \\
			\underset{\eqref{del u}}&{\leq} e^{Mt\left( \log \log \frac{1}{\abs{x}}+ \left( \log \frac{1}{\abs{x}} \right)^{1-\alpha}t+e^{Mt\left( \log \frac{1}{\abs{x}}\right)^{1-\alpha}}  \right) }.
		\end{aligned}
	\end{equation}
	Then, we have
	\begin{equation} \label{eq25}
		\begin{aligned}
			\abs{(\nabla \Phi (\Phi_t^{-1}(x),t))^{-1}} & = \bigg\vert \frac{1}{\det (\nabla \Phi (\Phi_t^{-1}(x),t))} \cdot \text{adj}\,(\nabla \Phi (\Phi_t^{-1}(x),t)) \bigg\vert\\
			& = \abs{\nabla \Phi (\Phi_t^{-1}(x),t)} \\
			\underset{\eqref{eq24}}&{\leq} e^{Mt\left( \log \log \frac{1}{\abs{x}}+ \left( \log \frac{1}{\abs{x}} \right)^{1-\alpha}t+e^{Mt\left( \log \frac{1}{\abs{x}}\right)^{1-\alpha}}  \right)}.
		\end{aligned}
	\end{equation}
	In the second equality, we use incompressibility of velocity field.
	Also,
	\begin{equation} \label{eq26}
		\begin{aligned}
			& \abs{I-\Phi (\Phi_t^{-1}(x),t)} \\
			&= \bigg\vert \int_0^t \frac{d}{ds} \Phi (\Phi_s^{-1}(x),s)\, ds  \bigg\vert \\
			\underset{\eqref{eq23} }&{=} \bigg\vert \int_0^t \nabla u(x,s) \cdot \nabla \Phi (\Phi_s^{-1}(x),s) \, ds  \bigg\vert \\
			\underset{ \eqref{del u}, \eqref{eq24} }&{\leq} Mt\left( \log \log \frac{1}{\abs{x}}+ \left( \log \frac{1}{\abs{x}} \right)^{1-\alpha}t+e^{Mt\left( \log \frac{1}{\abs{x}}\right)^{1-\alpha}}  \right)e^{Mt\left( \log \log \frac{1}{\abs{x}}+ \left( \log \frac{1}{\abs{x}} \right)^{1-\alpha}t+e^{Mt\left( \log \frac{1}{\abs{x}}\right)^{1-\alpha}}  \right)}.
		\end{aligned}
	\end{equation}
	Applying estimates \eqref{eq25}, \eqref{eq26} to \eqref{eq22} implies that there exist $R=R(\alpha)>0, 0 < T \leq 1$ and $M>0$ such that for all $0< \abs{x} < R$ and $0 \leq t \leq T$,
	\begin{equation} \label{eq27}
		\begin{aligned}
			\abs{J_1(x,t)} \underset{\eqref{eq22} }&{\leq} \abs{(\nabla \Phi(\Phi_t^{-1}(x),t))^{-1}} \cdot \abs{I-\nabla \Phi (\Phi_t^{-1}(x),t)} \\
			\underset{\eqref{eq25}, \eqref{eq26} }&{\leq} Mt\left( \log \log \frac{1}{\abs{x}}+ \left( \log \frac{1}{\abs{x}} \right)^{1-\alpha}t+e^{Mt\left( \log \frac{1}{\abs{x}}\right)^{1-\alpha}}  \right)e^{Mt\left( \log \log \frac{1}{\abs{x}}+ \left( \log \frac{1}{\abs{x}} \right)^{1-\alpha}t+e^{Mt\left( \log \frac{1}{\abs{x}}\right)^{1-\alpha}}  \right)}.
		\end{aligned}
	\end{equation}
	
	\hfill \underline{(End of the estimation of $J_1$)}
	
	\bigskip
	\vspace{4.9mm}
	
	\quad \quad $\bullet$ \underline{Estimation of $J_2$}
	
	Let $M>0$ be a constant in Lemma \ref{access rate alpha lemma} and take $0<R<e^{-3}$ such that $Re^{Mt\left( \log \frac{1}{R}\right)^{1-\alpha}}\leq \min \left( e^{-3},R' \right)$ for all $0 \leq t \leq T$, where $R'>0$ is a constant which makes inequality \eqref{u} hold. then, by mean value theorem and Lemma \ref{access rate alpha lemma},
	\begin{equation} \label{eq28}
		\begin{aligned}
			\abs{J_2(x,t)} &= \abs{\nabla w_s (\Phi_t^{-1}(x)-\nabla w_s(x)} \\
			\underset{\text{Lem } \ref{access rate alpha lemma} }&{\leq} \sup_{\abs{x}e^{-Mt \left( \log \frac{1}{\abs{x}}\right)^{1-\alpha}}<\abs{y}<\abs{x}e^{Mt \left( \log \frac{1}{\abs{x}}\right)^{1-\alpha}}} \abs{\nabla^2 w_s(y)} \cdot t \sup_{\abs{x}e^{-Mt \left( \log \frac{1}{\abs{x}}\right)^{1-\alpha}}<\abs{y}<\abs{x}e^{Mt \left( \log \frac{1}{\abs{x}}\right)^{1-\alpha}}} \abs{u(y,t)} \\
			\underset{\eqref{def of ws}, \eqref{u}}&{\leq} \frac{e^{2Mt \left( \log \frac{1}{\abs{x}}\right)^{1-\alpha}}}{\abs{x}^2 \log \frac{1}{\abs{x}}} \left( Mt \abs{x}e^{Mt \left( \log \frac{1}{\abs{x}}\right)^{1-\alpha}}\left( \log \log \frac{1}{\abs{x}} +  \left( \log \frac{1}{\abs{x}}\right)^{1-\alpha}t\right)\right) \\
			&=\frac{Mte^{Mt\left( \log \frac{1}{\abs{x}}\right)^{1-\alpha}}}{\abs{x}\log \frac{1}{\abs{x}}} \left( \log \log \frac{1}{\abs{x}} +  \left( \log \frac{1}{\abs{x}}\right)^{1-\alpha} t\right)
		\end{aligned}
	\end{equation}
	for all $0<\abs{x} < R$.
		
	\hfill \underline{(End of the estimation of $J_2$)}
	
	\bigskip

	Applying inequalities \eqref{equ2}, \eqref{eq27}, and \eqref{eq28} to \eqref{equ3}, we have
	\begin{equation} \label{eq29}
		\begin{aligned}
		&\abs{\nabla q(x,t)} \\
			 \underset{\eqref{equ3} }&{\leq }  \abs{\nabla w_s (\Phi_t^{-1}(x))} \cdot \abs{\nabla \Phi_t^{-1}(x)-I} + \abs{\nabla w_s (\Phi_t^{-1}(x)) - \nabla w_s(x)} \\
			\underset{\substack{\eqref{equ2}, \eqref{eq27}\\ \eqref{eq28} }}&{\leq} \frac{Mt}{\abs{x} \log \frac{1}{\abs{x}}}\left( \log \log \frac{1}{\abs{x}}+ \left( \log \frac{1}{\abs{x}} \right)^{1-\alpha}t+e^{Mt\left( \log \frac{1}{\abs{x}}\right)^{1-\alpha}}  \right) e^{Mt\left( \log \log \frac{1}{\abs{x}}+ \left( \log \frac{1}{\abs{x}} \right)^{1-\alpha}+e^{Mt\left( \log \frac{1}{\abs{x}}\right)^{1-\alpha}}  \right)},
		\end{aligned}
	\end{equation}
	whenever $0 < \abs{x} < R$ and $0 \leq t \leq T$ for some constant $R=R(\alpha)>0, M>0$ and $T>0$. In particular, for any $\varepsilon>0$, we can take $R=R(\alpha,\varepsilon)<e^{-3}$ such that for all $0<\abs{x}<2R$,
	\begin{equation*}
		\frac{1}{\left( \log \frac{1}{\abs{x}}\right)^{1-\alpha+\varepsilon }} \left( \log \log \frac{1}{\abs{x}} + \left( \log \frac{1}{\abs{x}}\right)^{1-\alpha}\right) < \frac{1}{100(M+1)^2}
	\end{equation*}
	holds. Then, for all $0<\abs{x}<2R$ and $0 \leq t \leq \frac{1}{\left( \log \frac{1}{\abs{x}}\right)^{1-\alpha+\varepsilon}}$, we have
	\begin{equation} \label{eq30}
		\abs{\nabla q(x,t)} \leq \frac{3Mt}{\abs{x} \log \frac{1}{\abs{x}}} \log \log \frac{1}{\abs{x}}.
	\end{equation}
	Then, for all $0<r<\frac{R}{2}$ and $0 \leq t \leq \frac{1}{\left( \log \frac{1}{r} \right)^{1-\alpha+\varepsilon} }$,
	\begin{equation} \label{eq31}
		\begin{aligned}
			&I_{221}(r,t)\\
			&=\bigg\vert \int_{A(2r,R) \cap \set{y \geq 0}}\frac{y^\perp}{\abs{y}^2} \bigg( \Big( w_s(\Phi_t^{-1}(y-(0,r)))-w_s(y-(0,r)) \Big)- \Big(w_s(\Phi_t^{-1}(y+(0,r)))-w_s(y+(0,r)) \Big)  \bigg) \, dy \bigg\vert \\
			&= \bigg\vert \int_{A(2r,R) \cap \set{y \geq 0}}\frac{y^\perp}{\abs{y}^2} \left(q(y-(0,r),t)-q(t+(0,r),t) \right) \, dy \bigg\vert \\
			& \leq \int_{A(2r,R) \cap \set{y \geq 0}} \frac{1}{\abs{y}} \cdot 2r \cdot \sup_{0 \leq s \leq 2} \abs{\nabla q (y-(0,(1-s)r),t) } \, dy.
		\end{aligned}
	\end{equation}
	Considering the fact that $\set{y-(0,(1-s)r) \, \vert \, y \in A(2r,R), 0 \leq s \leq 2} \subset B(r,2R)$ and $\abs{y-(0,(1-s)r)} \geq \frac{\abs{y}}{2} \text{ for } y \in A(2r,R)$, inequality \eqref{eq31} leads to
 	\begin{align*}
 			I_{221}(r,t) & \leq 2r  \int_{A(2r,R) \cap \set{y \geq 0}} \frac{1}{\abs{y}} \frac{3Mt}{\frac{\abs{y}}{2} \log \frac{1}{\abs{\frac{y}{2}}}} \log \log \frac{1}{\abs{\frac{y}{2}}} \, dy \\
			& \leq 24tr  \int_{A(2r,R) \cap \set{y \geq 0}} \frac{\log \log \frac{1}{\abs{y}}}{\abs{y}^2 \log \frac{1}{\abs{y}}} \, dy \\
			&= 24 \pi tr \int_{2r}^R \frac{\log \log \frac{1}{\rho}}{\rho \log \rho} \, d\rho \\
			& \leq 12 \pi t r \log \log \frac{1}{r}.
 	\end{align*}
 	

	\hfill \underline{(End of the estimation of $I_{221}$)}
		
	\bigskip
	
	\quad $\bullet$ \underline{Estimation of $I_{222}$}
	
	\bigskip
	
	Investigating
	\begin{equation*}
		I_{222(r,t)}=\bigg\vert \int_{A(R,3) \cap \set{y \geq 0}} \frac{y^\perp}{\abs{y}^2} \bigg( \Big( w_s(\Phi_t^{-1}(y-(0,r)))-w_s(y-(0,r)) \Big)- \Big(w_s(\Phi_t^{-1}(y+(0,r)))-w_s(y+(0,r)) \Big)  \bigg) \, dy \bigg\vert,
	\end{equation*}
	we can see that the integrand is same as $I_{221}$, but the integral range is more affirmative: It strictly avoids the origin where the integrand becomes singular. Therefore, it is far more easier to bound $I_{222}$ than $I_{221}$, using the fact that $w_s$ is compactly supported smooth function on $\overline{A(R,3)}$. Thus, we only provide the result to avoid redundancy: There exists a constant $M=M(R)$, where $R$ is the constant appearing in the bound of the term $I_{222}$, such that
	\begin{equation} \label{eq32}
		I_{222}(r,t) \leq Mtr.
	\end{equation}
	
	\hfill \underline{(End of the estimation of $I_{222}$)}
		
	\bigskip
	
	Combining two estimates for $I_{221}$ and $I_{222}$ leads to the bound of $I_{22}$: There exist $R=R(\alpha, \varepsilon)>0$ and $M=M(\alpha,\varepsilon)>0$ such that for all $0<r<R$ and $0 \leq t \leq \frac{1}{\left( \log \frac{1}{r}\right)^{1-\alpha+\varepsilon}}$,
	\begin{equation} \label{eq33}
		I_{22} \underset{\eqref{equ6} }{\leq} I_{221}+I_{222} \underset{\eqref{eq31}, \eqref{eq32} }{\leq} Mtr\log \log \frac{1}{r}.
	\end{equation}
	
	\hfill \underline{(End of the estimation of $I_{22}$)}
	
	\bigskip
	
	Finally, the Proposition \ref{estimation of I2} is deduced by combining \eqref{eq33} with the estimate for $I_{21}$ \eqref{equ4}.
\end{proof}

\bigskip

\underline{\textit{Proof of the Key Lemma $($Lemma \ref{key lemma}}$)$}.

Suppose $\alpha\in (0,1)$ and $\varepsilon, \delta>0$ are given. By Proposition \ref{estimation of I1}, Proposition \ref{estimation of I3}, and Proposition \ref{estimation of I2}, there exist $R=R(\alpha,\varepsilon)>0$ and $M=M(\alpha,\varepsilon)>0$ such that for all $0<r<R$ and $0 \leq t \leq \frac{1}{\left( \log \frac{1}{r}\right)^{1-\alpha+\varepsilon}}$,
\begin{equation*}
	\begin{aligned}
		\abs{u_g(\phi_r(t),t)-u_g(\phi_r(0),0)} \underset{\eqref{eq20}}&{\leq} I_1(r,t)+I_2(r,t)+I_3(r,t) \\
		\underset{\substack{\text{Prop }\ref{estimation of I1}, \text{ Prop }\ref{estimation of I3} \\ \text{Prop }\ref{estimation of I2} }}&{\leq} r \left( Mt+Mt \left( \log \frac{1}{r}\right)^{1-\alpha} e^{Mt\left(\log \frac{1}{r}\right)^{1-\alpha}}+ Mt \log \log \frac{1}{r} \right).
	\end{aligned}
\end{equation*}
Then, the elementary fact
\begin{equation*}
	\lim_{r \searrow 0} \frac{M}{\left( \log \frac{1}{r}\right)^{1-\alpha+\varepsilon}} =\lim_{r \searrow 0} \frac{M \left( \log \frac{1}{r}\right)^{1-\alpha}}{\left( \log \frac{1}{r}\right)^{1-\alpha+\varepsilon}} = \lim_{r \searrow 0} \frac{\log \log \frac{1}{r}}{\left( \log \frac{1}{r}\right)^{1-\alpha+\varepsilon}} =0
\end{equation*}
implies the Proposition \ref{key lemma}.

\hfill $\square$

\subsection{Proof of the Main Theorem: Breakdown Part}
\label{subsec: Proof of the Main Theorem: Breakdown Part}

To prove main theorem, Theorem \ref{main thm 2}, we point out one more elementary ingredient.

\begin{lemma} \label{phirt estimate lemma}
	Let $\alpha \in (0,1)$ be given. Then, for any $\varepsilon, \delta>0$, there exists $R=R(\alpha, \varepsilon)>0$ such that for all $0<r<R$ and $0 \leq t \leq \frac{1}{\left( \log \frac{1}{r}\right)^{1-\alpha+\varepsilon}}$,
	\begin{align*}
		\abs{\phi_r(t)-(0, r)} &< \delta , \\
		\abs{\frac{\phi_r (t)}{\abs{\phi_r(t)}}-(0,1)} &< \delta.
	\end{align*}
\end{lemma}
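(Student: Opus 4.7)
The plan is to control the displacement $\abs{\phi_r(t)-(0,r)}$ by integrating the velocity field along the trajectory, and then to read off both conclusions from that single bound.

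First, I exploit the symmetry of $u$: since $u$ is odd-symmetric (Lemma~\ref{symmetry lemma}), the flow map satisfies $\Phi(0,s)=0$, so Lemma~\ref{access rate alpha lemma} applied to the pair $(0,r)$ and $0$ yields
\begin{equation*}
	re^{-Ms\logonealpha{r}} \leq \abs{\phi_r(s)} \leq re^{Ms\logonealpha{r}}
\end{equation*}
for all $0 \leq s \leq t \leq 1$ once $r$ is small. Under the time constraint $t \leq 1/\left(\log\frac{1}{r}\right)^{1-\alpha+\varepsilon}$, we have $Ms\logonealpha{r} \leq M\left(\log\frac{1}{r}\right)^{-\varepsilon}$, which tends to $0$ as $r \searrow 0$. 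Hence for $R_1=R_1(\alpha,\varepsilon)$ small enough, the exponential factors lie in $(\tfrac12,2)$, so $\tfrac{r}{2} \leq \abs{\phi_r(s)} \leq 2r$ uniformly in $s\in[0,t]$.

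Next, integrating $\phi_r'(s)=u(\phi_r(s),s)$ and substituting the pointwise bound~\eqref{u} together with $\abs{\phi_r(s)}\sim r$ from the previous step gives
\begin{equation*}
	\abs{\phi_r(t)-(0,r)} \leq \int_0^t \abs{u(\phi_r(s),s)}\,ds \leq C\left( tr\log\log\tfrac{1}{r} + t^2 r\logonealpha{r}\right).
\end{equation*}
Applying the time constraint once more, the right-hand side is at most
\begin{equation*}
	Cr\left( \frac{\log\log\frac{1}{r}}{\left(\log\frac{1}{r}\right)^{1-\alpha+\varepsilon}} + \frac{1}{\left(\log\frac{1}{r}\right)^{1-\alpha+2\varepsilon}}\right) = r \cdot o(1) \quad (r\searrow 0).
\end{equation*}
So given any $\delta>0$, by shrinking $R\leq R_1$ further (depending also on $\delta$) I can arrange $\abs{\phi_r(t)-(0,r)} \leq c r$ for an arbitrarily small prescribed $c>0$, which in particular gives the first bound since $c r < c < \delta$ once $R<1$.

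Finally, the directional estimate is purely geometric: write $\phi_r(t)=(0,r)+\eta$ with $\abs{\eta}<c r$. Then $\abs{\phi_r(t)}\in((1-c)r,(1+c)r)$, and the triangle inequality gives
\begin{equation*}
	\abs{\frac{\phi_r(t)}{\abs{\phi_r(t)}}-(0,1)} \leq \frac{\abs{\eta}}{\abs{\phi_r(t)}} + \abs{\frac{(0,r)}{\abs{\phi_r(t)}}-(0,1)} \leq \frac{cr}{(1-c)r}+\frac{\bigl|r-\abs{\phi_r(t)}\bigr|}{\abs{\phi_r(t)}} \leq \frac{2c}{1-c},
\end{equation*}
which is $<\delta$ after choosing $c$ small enough. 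There is no real obstacle beyond carefully tracking constants; the only delicate point is that we need Lemma~\ref{access rate alpha lemma} \emph{before} having any pointwise estimate on $\abs{\phi_r(s)}$, since $u$ is not Lipschitz. The exponent $1-\alpha+\varepsilon$ in the time window is precisely what is needed to suppress the double-exponential spreading of that lemma and keep $\abs{\phi_r(s)}\sim r$ throughout the whole interval $[0,t]$.
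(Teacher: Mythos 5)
Your proposal is correct and follows essentially the same route as the paper: the paper's proof is literally ``a corollary of \eqref{equ5}'', and \eqref{equ5} is exactly your computation --- bound $\abs{\phi_r(t)-(0,r)}$ by $\int_0^t\abs{u(\phi_r(s),s)}\,ds$, localize the trajectory via Lemma~\ref{access rate alpha lemma} (using $\Phi(0,s)=0$ from symmetry), insert the pointwise bound \eqref{u}, and observe that the time window $t\lesssim(\log\frac1r)^{-(1-\alpha+\varepsilon)}$ makes everything $o(r)$. Your explicit derivation of the second, directional inequality from the first is a detail the paper leaves implicit, and your remark that $R$ may also be taken to depend on $\delta$ is consistent with the lemma's quantifier order.
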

\begin{proof}
	This is just a corollary of \eqref{equ5}.
\end{proof}

\bigskip

\underline{\textit{Proof of Theorem \ref{main thm 2}}}.

It suffices to consider the case $\beta \in (1,2)$. Suppose $\beta \in (0,1)$ and $T_0>0$ are given. Define $\alpha, \varepsilon>0$ by
\begin{equation} \label{eq34}
	\alpha \coloneqq\frac{5-\beta}{4}, \qquad \varepsilon \coloneqq \frac{\beta-1}{4}, \qquad \delta \coloneqq \min \left( \frac{K}{4}, \frac{1}{10}\right),
\end{equation}
where $K>0$ is the constant in \eqref{Lip initial}. Then, for any $r$ with $0<r<R$, where $R>0$ is the constant in Proposition \ref{key lemma},
\begin{align*}
	&\sup_{0 \leq t \leq T_0} \nrm{g(\cdot,t)}_{\phi_\beta} \\
	&\leq  \sup_{0 \leq t \leq T_0} \sup_{0<\abs{x}<e^{-3}} \left( \log \frac{1}{\abs{x}}\right)^{\beta} \vert g(x,t)-\overbrace{g(0,t)}^{\overset{\eqref{g} }{=}0}\vert \\
	& \geq \Bigg( \log \frac{1}{\abs{\phi_r \left(\frac{1}{\left( \log \frac{1}{r}\right)^{1-\alpha+\varepsilon}}\right)}}\Bigg)^{\beta} \abs{g \left( \phi_r \left( \frac{1}{\left( \log \frac{1}{r}\right)^{1-\alpha+\varepsilon}} \right), \frac{1}{\left( \log \frac{1}{r}\right)^{1-\alpha+\varepsilon}}\right)} \\
		\underset{\eqref{equation for g integral} }&{=}\Bigg( \log \frac{1}{\abs{\phi_r \left(\frac{1}{\left( \log \frac{1}{r}\right)^{1-\alpha+\varepsilon}}\right)}}\Bigg)^{\beta} \bigg\vert \int_0^{\frac{1}{\left( \log \frac{1}{r}\right)^{1-\alpha+\varepsilon}}} \frac{u_g(\phi_r(s),s) \cdot \frac{\phi_r(s)}{\abs{\phi_r(s)}}}{\abs{\phi_r(s)}\log \frac{1}{\abs{\phi_r(s)}}} \, ds \bigg\vert \\
		&= \Bigg( \log \frac{1}{\underbrace{\abs{\phi_r \left(\frac{1}{\left( \log \frac{1}{r}\right)^{1-\alpha+\varepsilon}}\right)}}_{\underset{\text{Lem } \ref{phirt estimate lemma} }{\geq}\frac{9}{10}r}}\Bigg)^{\beta} \cdot \\
	& ~~ \bigg\vert \int_0^{\frac{1}{\left( \log \frac{1}{r}\right)^{1-\alpha+\varepsilon}}}  \frac{\overbrace{(u_g(\phi_r(s),s) - u_g(\phi_r(0),0))\cdot \frac{\phi_r(s)}{\abs{\phi_r(s)}}}^{\underset{\text{Lem } \ref{key lemma} }{\leq} \frac{K}{4}r}+\overbrace{u_g(\phi_r(0),0) \cdot (0,1)}^{\underset{\eqref{Lip initial} }{\leq} -Kr} + \overbrace{u_g(\phi_r(0),0) \cdot \left( \frac{\phi_r(s)}{\abs{\phi_r(s)}}-(0,1)\right)}^{\underset{\text{Lem } \ref{phirt estimate lemma} }{\leq}\frac{K}{4}r}}{\underbrace{\abs{\phi_r(s)}\log \frac{1}{\abs{\phi_r(s)}}}_{\underset{\text{Lem } \ref{phirt estimate lemma}}{\geq}\frac{9}{10}r \log \frac{1}{r}}}    \, ds \bigg\vert \\
	& \geq \left( \frac{10}{9}\right)^{1+\beta} \frac{K}{2} \left( \log \frac{1}{r}\right)^{\beta-2+\alpha-\varepsilon} \\
	\underset{\eqref{eq34} }&{=} \left( \frac{10}{9}\right)^{1+\beta} \frac{K}{2} \left( \log \frac{1}{r}\right)^{\frac{\beta-1}{2}} \xrightarrow[\quad  r \searrow 0 \quad ]{} \boldsymbol{\infty}.
\end{align*}

\hfill $\square$

\end{document}